\documentclass{article}
\usepackage{amsmath,amsfonts}
\usepackage{color}
\usepackage{xcolor}
\usepackage{graphicx}
\usepackage{subfigure}
\usepackage{amsthm}
\newcommand{\R}{\mathbb{R}}

\newcommand{\B}{\mathbb{B}}

\newcommand{\D}{\mathcal{D}}
\newcommand{\co}{\mathrm{co}}
\newcommand{\zero}{\mathbf{0}}
\newcommand{\range}{\mathrm{range}}

\newcommand{\sign}{\mathrm{sign}}

\newtheorem{theorem}{Theorem}
\newtheorem{definition}{Definition}
\newtheorem{corollary}{Corollary}
\newtheorem{example}{Example}

\newtheorem{assumption}{Assumption}
\newtheorem{lemma}{Lemma}
\newtheorem{remark}{Remark}
\DeclareMathOperator*{\esssup}{ess\,sup}

\title{\LARGE \bf 
Discontinuous  integro-differential equations    and sliding mode control
}
\author{Andrey Polyakov \thanks{Andrey Polyakov is with Inria Centre of the University of  Lille, CNRS CRIStAL, Centrale Lille, FR-59000, Lille, France, {andrey.polyakov@inria.fr}. 
		The work is supported by French National Research Agency  the project ANR-24-CE48-2771 SLIMDISC.
}}
\begin{document}
\maketitle

\begin{abstract}
    The paper deals with  analysis and design of sliding mode control  systems modeled by  finite-dimensional integro-differential equations.
    Filippov method and equivalent control approach are extended to a class of 
    nonlinear discontinuous integro-differential equations and to a class of control systems modeled by infinite-dimensional differential equations in Banach spaces. Sliding mode control  algorithms  are  designed for distributed input delay systems  and for a heat control system. 
\end{abstract}
\vspace{-2mm}
\section{Introduction}
\subsection{State of the art} 
Sliding Mode Control (SMC) is a robust control methodology, which uses discontinuous feedback laws for rejection of matched disturbances. The design and analysis  of SMC 
is well-developed for 
finite-dimensional systems \cite{Utkin1992:Book}, \cite{EdwardsSpurgeon1998:Book}, \cite{Shtessel_etal2014:Book}. Mathematical models of finite-dimensional  SMC systems are ill-posed discontinuous  ODEs (\textit{Ordinary Differential Equations}).
 Filippov method \cite{Filippov1988:Book}, \cite{Cortes2008:CSM} is aimed at regularization of discontinuous  ODEs.  The method consists in a minimal (in a set-theoretic sense) extension of an ill-posed discontinuous  ODE to a well-posed ODI (\textit{Ordinary Differential Inclusion}). The Filippov regularization has
 a rather simple mathematical formulation \cite{Filippov1988:Book}  based on the notion of a set of Lebesgue measure zero.
 Filippov method is the main  tool for regularization of  a SMC system \cite{Filippov1988:Book}, \cite{Utkin1992:Book}, \cite{EdwardsSpurgeon1998:Book}, \cite{Shtessel_etal2014:Book}, \cite{PolyakovFridman2014:JFI}, \cite{Utkin_etal2020:Book}
 while an analysis of the closed-loop dynamics of the SMC system is usually based on the so-called \textit{equivalent control method} 
 \cite{Utkin1992:Book}, \cite{EdwardsSpurgeon1998:Book}, \cite{Shtessel_etal2014:Book}.

Formally, the finite-dimensional Filippov method is inapplicable to discontinuous  \textit{infinite-dimensional systems} in abstract spaces \cite{DaleckyKrein1974:Book} (e.g., due to the absence of a notion of the set of Lebesgue measure zero in an abstract space). 
Traditionally (see, e.g., \cite{OrlovUtkin1985:DAN}, \cite{Orlov_elal2004:IJC},  \cite{Orlov2008:Book} and references therein), 
SMC design for infinite-dimensional systems  is based on an equivalent control method.  
Well-posedness analysis of  infinite-dimensional  SMC systems requires rather complicated constructions \cite{Levaggi2002:DIE},  \cite{Pisano_etal2011:SIAM_JCO},  \cite{Levaggi2013:DCDS}, \cite{Orlov2020:Book}.
 Infinite-dimensional differential inclusion  obtained from discontinuous infinite-dimensional differential equation in an  abstract  space (like in  \cite{Mohet_etal2023:SCL}) is a non-trivial object for research \cite{Tolstonogov2000:Book}. Some generalized solutions to discontinuous  functional differential equations of retarded type are introduced on \cite[page 58]{KolmanovskiiMyshkis1992:Book}.  The corresponding  regularization procedure consists in a construction of a convex hull of limits of all possible convergent sequences in a function space. Applicability of such non-constructive procedure to analysis and design of SMC systems seems questionable too.

\textit{Integro-Differential Equations} (IDEs) \cite{Lakshmikantham_etal1995:Book}, \cite{OReganMeehan1998:Book} model various finite and infinite dimensional systems, such as
epidemics \cite{KermackMcKendrick1991:BMB},  dispersive waves \cite{Debnath2005:Book},
systems with delays \cite{Hale1977:Book}, \cite{Fridman2014:Book}, \cite{Orlov2020:Book}, and others.  Input-output dynamics of some systems modeled by PDEs (\textit{Partial Differential Equations}) can also be described by IDEs (see Section \ref{sec:I.B}). 
Being a model of (possibly) infinite-dimensional system, an IDE is usually defined by  functions  in finite-dimensional spaces
(see, e.g., \cite{Lakshmikantham_etal1995:Book}, \cite{OReganMeehan1998:Book}). To the best of author's knowledge, \textit{finite-dimensional Filippov method} and \textit{SMC theory}  has never been developed
for  IDEs.
\vspace{-2mm}
\subsection{Motivation example: SMC design for a heat system}\label{sec:I.B}
Let us consider a control system modeled by the heat PDE\vspace{-1mm}
\begin{equation}\label{eq:heat}
	\tfrac{\partial x}{\partial t}\!=\!\nu \tfrac{\partial^2x}{\partial z^2}+
	\!\beta(z)(u(t)\!+\!\gamma(t)), \quad t\geq t_0\vspace{-1mm}
\end{equation}
with the homogeneous Dirichlet boundary conditions\vspace{-1mm}
\begin{equation}\label{eq:heat_boundary}
	x(t,0)=0, \quad x(t,1)=0, \vspace{-1mm}
\end{equation}
and the initial condition\vspace{-1mm}
\begin{equation}\label{eq:heat_ic}
	x(t_0,z)=x^0(z), \quad z\in [0,1],\vspace{-1mm}
\end{equation}
where $x(t,\cdot)$ is a distributed state of the system, $u(t)\in\R$ is the  control input,
$\beta: \R\to \R$  is a function that models an effect of the input to the system,  $\gamma\in L^{\infty}(\R;\R)$ is an unknown function (perturbation) with a known upper bound $\|\gamma\|_{L^{\infty}(\R;\R)}\leq \overline{\gamma}$, $\nu>0$ is a conductivity coefficient and $x^0$  is the initial state 
satisfying  the compatibility condition, e.g.,
\begin{equation}\label{eq:tilde_D}
 x^0\in \tilde{ \mathcal{D}}=\{x \in C^{2}([0,1];\R): x(0)=x(1)=0\}.
\end{equation}
A control system similar to  \eqref{eq:heat} also appears in the case of the  boundary control design using the Dirichlet lifting \cite{PrieurTrelat2019:TAC}, \cite{KatzFridman2021:EJC}.
 
Let a measured output of the  system be modeled as\vspace{-1mm}
\begin{equation}\label{eq:heat_y}
	y(t)\!=\!\int^1_0 \!\!\xi(z)x(t,z) dz, \quad \xi \!\in\! \tilde {\mathcal{D}}.\vspace{-1mm}
\end{equation}

\textit{The control aim is to steer the output  $y$ to zero in a finite time rejecting the effect of the unknown perturbation $\gamma$}. 

Let  a discontinuous output control law  be defined as \vspace{-1mm}
\begin{equation}\label{eq:heat_control}
	u(t)\!=\!q(t)\sign(y(t)), \quad q\in L^{\infty}([t_0,+\infty);\R). \vspace{-1mm}
\end{equation}
This is a SMC  provided that $q$  is properly selected  \cite{Orlov_elal2004:IJC}, \cite{Orlov2020:Book}.

The infinite-dimensional closed-loop control system \eqref{eq:heat} -- \eqref{eq:heat_control} is ill-posed in the SMC case.  A regularization of this discontinuous PDEs  with SMC is a nontrivial problem 
 (see, e.g., \cite[pages 24-28]{Orlov2020:Book}, \cite{Mohet_etal2023:SCL}). 
 However, the considered infinite-dimensional control system has the scalar  input $u$ and the scalar output $y$. The discontinuous feedback $u$ is a function of  output variable only. For such infinite-dimensional system, the well-posedness analysis can be reduced to a study of a finite-dimensional IDE that represents the input-output dynamics:
 \vspace{-2mm}
\begin{equation}\label{eq:heat_io_dynamics}
\! \,	\dot y(t)\!=\!p(t-t^0)+b(u(t)+\gamma(t))+\!\!\int^t_{t_0} \!\!\!\!\Phi(t-\tau)(u(\tau) +\gamma (\tau)) d\tau,\!\!\vspace{-2mm}
\end{equation}
where $y(0)=\int^1_0 \xi(z)x^0(z) dz$, $b=\int^1_0 \xi(z)\beta(z) dz$,  and $p, \Phi$ are some continuous functions 
(see Example \ref{ex:8} in Section \ref{sec:SMC_design}).

If the control law  $u$ is defined by the formula \eqref{eq:heat_control}, then the input-output dynamics of 
the system \eqref{eq:heat} - \eqref{eq:heat_control} 
should satisfy the discontinuous IDE \eqref{eq:heat_io_dynamics}, \eqref{eq:heat_control}. Due to discontinuity of $u$ on $y$, this IDE is ill-posed  \cite{Lakshmikantham_etal1995:Book},
 \cite{OReganMeehan1998:Book}.  However, the  IDE is defined in terms of finite-dimensional functions, which admit Filippov regularization \cite{Filippov1988:Book}.  
 This paper extends the Filippov method to a class of finite-dimensional IDEs, which covers the IDE \eqref{eq:heat_io_dynamics}, \eqref{eq:heat_control}  as a particular case. 
 A regularization of the IDE \eqref{eq:heat_io_dynamics}, \eqref{eq:heat_control}  allows  the original discontinuous PDE  \eqref{eq:heat} -- \eqref{eq:heat_control} to be easily regularized (see, e.g., \cite{Polyakov2025:CDC}).
Such an IDE-based approach to SMC design and analysis is applicable to a rather large class of systems modeled by differential equations in Banach spaces.\vspace{-3mm}
\subsection{Contributions of the paper}
\begin{itemize}
\item The Filippov method is extended to the IDE\vspace{-1mm}
\begin{equation}\label{eq:iODE_control}
\begin{split}
	\dot x(t)=&f(t,x(t),u(t,x(t)))\\
	&+\!\int^t_{t^0}\!\!\Phi(t,\tau) \tilde f(\tau,x(\tau),u(\tau,x(\tau))) d\tau,\vspace{-1mm}
\end{split}
\end{equation}  
where $t>t^0\in \R, x(t)\in \R^n$,  $f,\tilde f:\R\times \R^n\times \R^m\to \R^n$,{ $\Phi: \R\times \R\to \R^{n\times n}$}, $u:\R\times \R^n\to \R^m$ is a  (possibly) discontinuous  state feedback, which may be a SMC.
\item 
 The well-posedness analysis (existence, uniqueness and continuous dependence  of Filippov solutions on initial data)  is presented for the discontinuous IDE \eqref{eq:iODE_control} . 
\item The equivalent control method is developed for affine-in-control IDEs (i.e., for $f=a+bu$ and $\tilde f=\tilde a+\tilde bu$).
\item  A SMC  design for   control systems modeled by linear IDEs  is presented.
\item An IDE-based  SMC design and analysis for
some control systems modeled by differential equations in Banach spaces  is developed and illustrated on the system \eqref{eq:heat}-\eqref{eq:heat_control}.
\end{itemize}\vspace{-4mm}
\subsection{Organization of the paper}
The paper is organized as follows. First,  Filippov method for discontinuous ODEs is  briefly surveyed. The  surveyed results are utilized in the next section, where an extension of Filippov method  to IDEs is proposed. After that, the obtained theoretical results are applied to SMC design and analysis for finite-dimensional systems with distributed input delay and {for  infinite-dimensional systems modeled by differential equations in Banach spaces}. Finally, concluding remarks are given. 

\vspace{-2mm}
\section{Preliminaries: Discontinuous ODEs}\label{sec:hom}
 
 \subsection{Some classes of functions}
 	For $1\!\leq\! p\!\leq\! \infty$, $L^p$ denotes the well-known Lebesgue space.
 A function $(t,x)\in \R\times \R^n\mapsto f(t,x)\in \R^n$ is said to satisfy
 	\begin{itemize}
 	\item    \textit{Carath\'{e}odory Condition} if $f$ is
 	measurable in $t$ for all $x$, continuous in $x$ for almost all $t$ and  for any $\alpha,r>0$ there exists 
 	$m\in L^1((-\alpha,\alpha);\R)$ such that    $\left\|f(t,x)\right\| \leq m(t)$ for almost all $t\!\in\! (-\alpha,\alpha)$, for all $x\!\in\! \mathcal{B}(r)$, where $\mathcal{B}(r)$ is 
 	an open ball of the radius $r>0$, $\|\cdot\|$ - Euclidean norm.
  \item \textit{One-Sided Lipschitz Condition on $\mathcal{X}$} if
$\exists \ell\in L^{\infty}_{\rm loc}(\R;\R)$:\vspace{-1mm}
	\begin{equation}\label{eq:ch2_Lipschitz}
	(x-y)^{\top}(f(t,x)-f(t,y))\!\leq\! \ell(t)\|x-y\|^2,  \vspace{-1mm}
	\end{equation}
$\forall t\in \R, \forall x,y \!\in\! \mathcal{X}$, where 	$\ell$ may depend on $\mathcal{X}\subset \R^n$;
	  \item   \textit{Lipschitz Condition on $\mathcal{X}$} if $\exists \ell\in L^{\infty}_{\rm loc}(\R;\R)$:\vspace{-1mm}
	\begin{equation}\label{eq:ch2_Lipschitz}
		\|f(t,x)-f(t,y)\|\leq \ell(t)\|x-y\|,  \quad t\!\in\!\R,x,y \!\in\! \mathcal{X};\vspace{-1mm}
	\end{equation}	
	\item \textit{Filippov Condition} if
	$f$ is measurable and 
	for  any $\alpha,r\!>\!0$ there exists 
	$m\!\in\! L^1((-\alpha,\alpha);\R)$ :
	$\left\|f(t,x)\right\|\!\leq\!m(t)$ 
	for almost all $t\in (-\alpha,\alpha)$ and for almost all $x\in \mathcal{B}(r)$.
	\end{itemize}

{A measurable  function   $f: \R \times \R^{n} \to  \R^n$ is said to be \textit{locally (one-sided) Lipschitz} on $\mathcal{X}$ if it satisfies \textit{(One-Sided) Lipschitz Condition} on any compact from  $\mathcal{X}\subset \R^n$.  If $\mathcal{X}=\R^n$ we write shortly locally  (one-sided) Lipschitz.} 

A measurable  function   $(t,x)\in \R \times \R^{n} \mapsto f(t,x)\in \R^n$ is said to be \textit{piece-wise {(one-sided) Lipschitz} continuous in $x$}  if 
\begin{itemize}
	\item the space  $\R^{n}$ can be split into a finite number
	of open connected sets $G^j\subset \R^{n}, j=1,2,...,N$ with non-empty interiors  and continuous boundaries  $\partial G^j$;
	\item 
	$\bigcup_{i=1}^n \overline{G^j}=\R^n, G^i\bigcap G^j=\emptyset 
	\text{ for } i\neq j
	$ and the boundary set 
	$\mathcal{S}=\bigcup_{i=1}^N \partial G^j$ is
	of measure zero 
	\item  for each $j=1,...,N$,
	there exists  a locally (one-sided) Lipschitz continuous function $(t,x)\in (\R \times \R^{n} \mapsto f^j(t,x)\in \R^n)$ such that
  $f^j(t,x)\!=\!f(t,x)$ for  all $x\!\in\! G^j$ and almost all $t\!\in\! \R$.
\end{itemize}
\vspace{-1mm}
\subsection{Well-posed ODEs}
A problem is said to be \textit{well-posed} in Hadamard sense if 
\textit{1) it has a solution; 2) the solution is unique; 3) the solution  changes continuously with the initial data.}  
The  ODE\vspace{-1mm}
\begin{equation}\label{eq:ch2_generalsystem}
	\dot x(t) = f(t,x(t)), \quad f :\R \times \R^n\to  \R^n\vspace{-1mm}
\end{equation} 
defines a rule of an evolution of the state $x(t)\in \R^n$ of the system in time $t\in \R$.
Let $\mathcal{I}$ be one of the following intervals: $[a,b]$, $(a,b)$, $[a,b)$ or $(a,b]$, where $a,b\in[-\infty,+\infty],a<b$.
\begin{definition}[\small Classical solution of ODE]\; 
	\itshape 	A  function $ x\in C^1(\mathcal{I};  \R^n)$ is said to be a {classical solution of ODE} \eqref{eq:ch2_generalsystem} if
	it satisfies the equation \eqref{eq:ch2_generalsystem} everywhere on  $\mathcal{I}$.
\end{definition}

 The Initial Value Problem (IVP)  is to find  a solution  \eqref{eq:ch2_generalsystem}
for a given  initial state $x_0\in \R^n$ at a given initial time $t_0\in\R$:\vspace{-1mm}
\begin{equation} \label{eq:ch2_initialcondition}
	x(t^0)=x^0\in \R^n.\vspace{-1mm}
\end{equation} 
For the proofs of the classical theorems surveyed  in  this subsection we refer the reader, for example, to \cite{CoddingtonLevinson1955:Book}, \cite{Filippov1988:Book}.
\begin{theorem}[\small Peano Existence Theorem, 1890]\label{Thm:Piano} 
	\itshape If $f$ is continuous then 
	the IVP \eqref{eq:ch2_generalsystem}, \eqref{eq:ch2_initialcondition} has a classical solution defined, at least, locally in time.
\end{theorem}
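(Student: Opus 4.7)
The plan is to reduce the IVP to the integral equation $x(t) = x^0 + \int_{t^0}^t f(s,x(s))\,ds$ and to establish existence on a short interval by a compactness argument, since the mere continuity of $f$ precludes a contraction-mapping approach. First I would choose constants $a,b>0$ so that the cylinder $R = [t^0-a,t^0+a]\times\overline{\mathcal{B}(x^0,b)}$ lies in the domain, set $M = \max_{(t,x)\in R}\|f(t,x)\|$ (finite by continuity on a compact set), and pick $\alpha = \min(a,b/M)$. The construction on $[t^0-\alpha,t^0]$ is symmetric, so I would treat only the forward interval $[t^0,t^0+\alpha]$.

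Second, I would construct Euler polygonal approximations $x_n$. For each $n$, partition $[t^0,t^0+\alpha]$ into subintervals of width $\alpha/n$ with nodes $t^0=t_0<t_1<\cdots<t_n=t^0+\alpha$, set $x_n(t_0)=x^0$, and extend $x_n$ linearly on each subinterval with slope $f(t_k,x_n(t_k))$. The choice of $\alpha$ ensures by induction that $x_n(t)\in\overline{\mathcal{B}(x^0,b)}$ for every $t\in[t^0,t^0+\alpha]$, so $f$ is always evaluated inside $R$. Consequently $\|x_n(t)-x_n(s)\|\le M|t-s|$, giving uniform boundedness and equicontinuity of the family $\{x_n\}$.

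Third, the Arzel\`a--Ascoli theorem produces a subsequence $x_{n_k}$ converging uniformly on $[t^0,t^0+\alpha]$ to a continuous function $x$. Rewriting each $x_n$ in the integral form $x_n(t) = x^0 + \int_{t^0}^t f(\tau_n(s),x_n(\tau_n(s)))\,ds$, where $\tau_n(s)$ denotes the left endpoint of the subinterval containing $s$, I would pass to the limit on the right-hand side along the subsequence.

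The main obstacle is this limit passage, because the integrand involves $x_n$ composed with the step-valued map $\tau_n$, so uniform convergence of $x_n$ alone is not enough. I would resolve it by invoking the uniform continuity of $f$ on the compact set $R$: since $|s-\tau_n(s)|\le\alpha/n\to 0$ and $\|x_n(\tau_n(s))-x(s)\|\to 0$ uniformly in $s$, uniform continuity yields $f(\tau_n(s),x_n(\tau_n(s)))\to f(s,x(s))$ uniformly on $[t^0,t^0+\alpha]$. Hence the integrals converge and $x$ satisfies the integral equation; by continuity of $f$ and the fundamental theorem of calculus, $x\in C^1$ and $x$ is a classical solution of \eqref{eq:ch2_generalsystem}--\eqref{eq:ch2_initialcondition} on the interval $[t^0-\alpha,t^0+\alpha]$.
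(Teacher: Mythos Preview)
Your argument is the standard Euler-polygon/Arzel\`a--Ascoli proof and is correct; the only cosmetic gap is the case $M=0$, which you should dispatch trivially before writing $\alpha=\min(a,b/M)$. The paper does not give its own proof of this classical result but simply refers the reader to \cite{CoddingtonLevinson1955:Book} and \cite{Filippov1988:Book}, where exactly this construction appears, so your proposal matches what the cited references do.
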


The identity \eqref{eq:ch2_generalsystem} may hold almost everywhere  if $x : \mathcal{I} \mapsto \R^n$ is an {absolutely continuous function}\footnote{A function $x: \mathcal{I}\mapsto \R^n$ is said to be absolutely continuous if $\forall \varepsilon>0$, $\forall \delta>0$ such that whenever a finite sequence of pairwise disjoint intervals $(\alpha_k,\beta_k)\subset \mathcal{I}$ satisfies $ \sum_k(\beta_k-\alpha_k)<\delta$ then $\sum_k |x(\beta_k)-x(\alpha_k)|<\varepsilon$.}.
\begin{definition}[\small Carath\'{e}odory solution of ODE]\;\itshape
	An absolutely continuous function $ x : \mathcal{I}\subset \R^n \to \R^n$  is said to be  a  Carath\'{e}odory (or strong) solution of ODE \eqref{eq:ch2_generalsystem} if  it satisfies the equation \eqref{eq:ch2_generalsystem} almost everywhere on the time interval $\mathcal{I}$.
\end{definition}

The existence of Carath\'{e}odory solutions can be proven for ODEs with discontinuous in $t$ right-hand sides.
\begin{theorem}[\small Carath\'{e}odory Existence Theorem, 1918]\label{Theorem:Car}\hfill\newline
\itshape If $f$ satisfies Carath\'{e}odory Condition then the IVP \eqref{eq:ch2_generalsystem}, \eqref{eq:ch2_initialcondition} has a Carath\'{e}odory solution  defined, at least, locally in time.
\end{theorem}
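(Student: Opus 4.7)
My plan is to recast the IVP as the integral fixed-point equation $x(t) = x^0 + \int_{t^0}^t f(s, x(s))\,ds$, build a candidate solution as the uniform limit of explicit delayed approximants, and then invoke Lebesgue's dominated convergence theorem. First, I would fix $r > \|x^0\|$, take the dominating function $m \in L^1((t^0-\alpha_0, t^0+\alpha_0);\R)$ supplied by the Carath\'{e}odory condition on $\mathcal{B}(r)$, and shrink the horizon $\alpha \in (0, \alpha_0]$ until $\int_{t^0}^{t^0+\alpha} m(s)\,ds \le r - \|x^0\|$. This guarantees that any trajectory dominated by $m$ and starting at $x^0$ remains inside $\mathcal{B}(r)$ over $[t^0, t^0+\alpha]$.

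For each $h \in (0, \alpha)$ I would define the delayed Euler-type approximation $x_h : [t^0-h, t^0+\alpha] \to \R^n$ by $x_h(t) = x^0$ on $[t^0-h, t^0]$ and $x_h(t) = x^0 + \int_{t^0}^t f(s, x_h(s-h))\,ds$ on $[t^0, t^0+\alpha]$. Induction on the subintervals $[t^0+kh, t^0+(k+1)h]$ makes $x_h$ unambiguously defined and absolutely continuous, with $f(s, x_h(s-h))$ measurable on each step, since the argument $x_h(s-h)$ is continuous in $s$ and the Carath\'{e}odory measurability lemma applies. The uniform bound $\|f(s, x_h(s-h))\| \le m(s)$ then yields $\|x_h(t) - x^0\| \le \int_{t^0}^t m \le r - \|x^0\|$, keeping all iterates in $\mathcal{B}(r)$, and the equicontinuity estimate $\|x_h(t_2) - x_h(t_1)\| \le \int_{t_1}^{t_2} m(s)\,ds$ holds uniformly in $h$ because $s \mapsto \int_{t^0}^s m$ is absolutely continuous.

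Next, Arzel\`{a}--Ascoli produces a subsequence $x_{h_k}$ converging uniformly on $[t^0, t^0+\alpha]$ to a continuous $x$ with values in $\overline{\mathcal{B}(r)}$, and the delayed trajectories $s \mapsto x_{h_k}(s-h_k)$ converge uniformly to the same $x$. Continuity of $f$ in the state argument for almost every $s$ gives $f(s, x_{h_k}(s-h_k)) \to f(s, x(s))$ for almost every $s$; the uniform $L^1$ majorant $m$ then permits dominated convergence to pass the limit through the integral, giving $x(t) = x^0 + \int_{t^0}^t f(s, x(s))\,ds$ on $[t^0, t^0+\alpha]$. Absolute continuity of the right-hand side transfers to $x$, so $\dot x(t) = f(t, x(t))$ almost everywhere and $x$ is a Carath\'{e}odory solution; a symmetric argument extends the solution to $[t^0-\alpha, t^0]$.

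I expect the delicate step to be the limit passage itself: one must know that $s \mapsto f(s, x(s))$ is Lebesgue measurable for the continuous limit $x$, which is the standard superposition lemma for Carath\'{e}odory maps, and that the null sets on which continuity of $f$ in $x$ may fail do not obstruct the almost-everywhere pointwise convergence $f(s, x_{h_k}(s-h_k)) \to f(s, x(s))$. Both points are routine once the full Carath\'{e}odory hypothesis is exploited, but they are the genuine content of the argument; the remainder is the compactness-and-bounds skeleton familiar from the proof of Theorem \ref{Thm:Piano}.
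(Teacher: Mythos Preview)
The paper does not supply its own proof of this statement: it is listed among the classical results surveyed in Section~\ref{sec:hom}, with the reader referred to \cite{CoddingtonLevinson1955:Book} and \cite{Filippov1988:Book}. Your argument is the standard Tonelli delayed-approximation proof found in those references (construct equicontinuous, uniformly bounded approximants dominated by $m$, extract a uniform limit via Arzel\`a--Ascoli, and pass to the limit in the integral equation by dominated convergence), and it is correct as written; the two points you flag---measurability of the superposition $s\mapsto f(s,x(s))$ and almost-everywhere convergence of $f(s,x_{h_k}(s-h_k))$---are indeed the substantive ones, and both follow directly from the Carath\'eodory hypothesis.
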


The strong solutions allow some systems with deterministic noise to be modeled by ODEs, e.g.,  the system $\dot x(t)=-x(t)+\gamma(t)$ does not have a classical solution if $\gamma\in L^{\infty}(\R;\R)$ is a discontinuous signal, but it has the strong solution. 

Let $x(t,t^0,x^0)$ denote  a solution of the IVP  \eqref{eq:ch2_generalsystem}, \eqref{eq:ch2_initialcondition}. 

\begin{theorem}[\small Continuity of solutions on initial data {\cite{CoddingtonLevinson1955:Book},\,page\,58}]\label{thm:con_sol_strong_Rn}
\itshape If $f$ satisfies Carath\'{e}odory Condition, then $x(t,t^1,x^1)\to x(t,t^0,x^0)$ as $(t^1,x^1)\to (t^0,x^0)$ uniformly on $\mathcal{I}$ provided that
the
solution $x(t,t^0,x^0)$
 is  unique on $\mathcal{I}$.
\end{theorem}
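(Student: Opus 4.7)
The plan is to reduce the differential IVP to the Volterra integral equation via absolute continuity, and then run an equicontinuity/compactness argument in which the assumed uniqueness of $x(\cdot,t^0,x^0)$ serves as the final identification step. Throughout, I would fix a compact subinterval $[a,b]\subset \mathcal{I}$ with $t^0\in[a,b]$ and work on it; the general statement on $\mathcal{I}$ then follows by covering $\mathcal{I}$ with such intervals.

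First, I would rewrite every Carath\'{e}odory solution as
\[
x(t,t^1,x^1) = x^1 + \int_{t^1}^{t} f(s,x(s,t^1,x^1))\,ds.
\]
Using the Carath\'{e}odory Condition, I would pick $r>0$ with $\|x(t,t^0,x^0)\|\le r/2$ on $[a,b]$, and a dominating function $m\in L^1([a,b];\R)$ bounding $\|f(s,x)\|$ on $\mathcal{B}(r)$. A standard continuation argument, based on the absolute continuity of $t\mapsto \int_a^t m(s)\,ds$, then guarantees that for $(t^1,x^1)$ sufficiently close to $(t^0,x^0)$ any Carath\'{e}odory solution starting at $(t^1,x^1)$ extends to the whole of $[a,b]$ and stays inside $\mathcal{B}(r)$.

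Second, I would proceed by contradiction: if uniform convergence fails on $[a,b]$, there exist $\varepsilon>0$ and a sequence $(t^1_n,x^1_n)\to(t^0,x^0)$ with $\sup_{t\in[a,b]}\|x(t,t^1_n,x^1_n)-x(t,t^0,x^0)\|\ge \varepsilon$. The integral representation combined with the bound by $m$ makes the family $\{x(\cdot,t^1_n,x^1_n)\}$ uniformly bounded and equicontinuous, so Arzel\`{a}--Ascoli yields a subsequence converging uniformly to some continuous $x^{\star}$. Passing to the limit in the integral equation via Lebesgue dominated convergence, using continuity of $f(s,\cdot)$ for a.e.\ $s$, shows that $x^{\star}$ is itself a Carath\'{e}odory solution of the IVP with initial data $(t^0,x^0)$. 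The uniqueness hypothesis forces $x^{\star}\equiv x(\cdot,t^0,x^0)$ on $[a,b]$, contradicting the choice of $\varepsilon$, and because every subsequence has a further subsequence converging to the same limit, the whole family converges.

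I expect the main obstacle to be the \emph{a priori} extension step: while equicontinuity on finite subintervals is textbook, ruling out that perturbed solutions escape $\mathcal{B}(r)$ before time $b$ requires careful use of the absolute continuity of $\int m$, as the Carath\'{e}odory bound is not uniform in $t$. A secondary delicate point is justifying the limit inside $f(s,\cdot)$ on the measure-zero set where $f(s,\cdot)$ may fail to be continuous, which is handled by noting that uniform convergence of the arguments, combined with continuity of $f(s,\cdot)$ for almost every $s$, still lets dominated convergence deliver the integral identity.
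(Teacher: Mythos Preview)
The paper does not supply its own proof of this theorem; it is listed among the classical results for which the reader is referred to \cite{CoddingtonLevinson1955:Book} and \cite{Filippov1988:Book}. Your proposal is precisely the standard textbook argument found there---integral reformulation, Arzel\`a--Ascoli compactness, dominated convergence in the integral, and identification of the limit via the uniqueness hypothesis---so it is correct and matches the referenced approach.
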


Uniqueness of solutions can be guaranteed under additional restrictions to the right-hand side of \eqref{eq:ch2_generalsystem}. 
\begin{theorem}[Uniqueness of solutions]	\itshape \label{thm:uniqness_ODE}
If  a locally {(one-sided)} Lipschitz continuous function $f$ satisfies   Carath\'{e}odory  Condition
then the IVP \eqref{eq:ch2_generalsystem}, \eqref{eq:ch2_initialcondition} has a unique  Carath\'{e}odory solution defined, at least, locally in the (resp., forward) time.
\end{theorem}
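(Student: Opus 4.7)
The plan is to invoke Theorem~\ref{Theorem:Car} for local existence and then deduce uniqueness by a Grönwall estimate tailored to the (one-sided) Lipschitz hypothesis. Since both sides of \eqref{eq:ch2_Lipschitz} have already been stated, the work reduces to constructing a suitable nonnegative Lyapunov-type quantity and differentiating it along two candidate solutions.

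Concretely, I would suppose $x_1,x_2$ are two Carathéodory solutions of \eqref{eq:ch2_generalsystem}, \eqref{eq:ch2_initialcondition} on a common interval $\mathcal{I}\ni t^0$. By continuity and the initial condition, both stay in some compact set $\mathcal{X}\subset\R^n$ on a sufficiently short subinterval $[t^0,t^0+\delta]$, so the local one-sided Lipschitz constant $\ell\in L^{\infty}_{\mathrm{loc}}(\R;\R)$ from \eqref{eq:ch2_Lipschitz} is applicable. Setting $v(t)=\|x_1(t)-x_2(t)\|^2$, the function $v$ is absolutely continuous (product/square of absolutely continuous maps), so for almost every $t$ the chain rule and the Carathéodory equations give
$$\dot v(t)=2(x_1(t)-x_2(t))^{\top}\bigl(f(t,x_1(t))-f(t,x_2(t))\bigr)\leq 2\ell(t)\,v(t).$$
Applying Grönwall's inequality in its absolutely continuous form yields $v(t)\leq v(t^0)\exp\bigl(2\int_{t^0}^{t}\ell(s)\,ds\bigr)=0$ on $[t^0,t^0+\delta]$, hence $x_1\equiv x_2$ there. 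A standard connectedness/continuation argument then extends the coincidence to the whole interval of existence (forward in time).

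The main subtlety, and the reason for the ``resp., forward'' clause in the statement, is the asymmetry between the two hypotheses. Under the full (two-sided) Lipschitz condition one can bound $|(x-y)^{\top}(f(t,x)-f(t,y))|$ by Cauchy--Schwarz, so the Grönwall estimate works in both time directions and uniqueness is bilateral; under only the one-sided Lipschitz condition \eqref{eq:ch2_Lipschitz}, the upper bound on $\dot v$ is valid but no corresponding lower bound is available, so the argument runs only for $t\geq t^0$. A minor technical point worth mentioning is the justification of the chain-rule step for $v=\|x_1-x_2\|^2$ with absolutely continuous $x_1,x_2$ (standard, but it is what allows almost-everywhere differentiation to be combined with the Carathéodory identity $\dot x_i=f(t,x_i)$ a.e.). I expect this one-sided versus two-sided distinction to be the only real obstacle; everything else is a direct chaining of Theorem~\ref{Theorem:Car} with a Grönwall argument.
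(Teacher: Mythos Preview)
Your argument is correct and is exactly the standard Gr\"onwall approach; the paper does not give its own proof of this classical result but refers to \cite{CoddingtonLevinson1955:Book}, \cite{Filippov1988:Book}, and in fact uses precisely the same $\tfrac{d}{dt}\|x_1-x_2\|^2$ computation later in the proof of Theorem~\ref{thm:uniq_one_side_IDE}. Your handling of the one-sided versus two-sided distinction (forward-only uniqueness in the one-sided case) is also correct.
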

\vspace{-1mm}
\subsection{Regularization of ill-posed ODE by Filippov method}

Systems with discontinuous in $x$ right-hand sides may have Carath\'{e}odory solutions in some cases \cite{Liberzon2003:Book}.
However, SMC theory deals with control systems modeled by \textit{ill-posed} ODEs with discontinuous right-hand sides \cite{Utkin_etal2009:Book, Liberzon2003:Book, GeligLeonovYakubovich2004:Book}. 
\begin{example}\itshape Let us consider the following SMC system\vspace{-1mm}
\begin{equation}\label{eq:ch2_motiv_example}
	\dot x(t)\!=\!u(t)\!+\!0.5\sin(3t),  \quad u(t)\!=\!-\sign(x(t)),  \vspace{-1mm}
\end{equation}
where $t\in \R$ is the time variable, $x(t)\in \R$ is the state variable, $u(t)\in \R$ is the control variable,  
$\sign(x(t))=1$ for $x(t)>0$, $\sign(x(t))=-1$ for $x(t)<0$ and $\sign(0)=0$. The ODE \eqref{eq:ch2_motiv_example} admits classical solutions on $\{x\!\in\! \R: x\!>\!0\}$ and on $\{x\!\in\! \R : x\!<\!0\}$. Each solution with $x(t_0)\!=\!x_0\!\neq\! 0$ reaches zero  in a finite time $t=t^0+T(t^0,x^0)$, since $\frac{d|x(t)|}{dt}
\leq-0.5$ for $x(t)\neq 0$. Hence, the natural continuation of a solution for $t>t^0+T(t^0,x^0)$ is $x(t)=0$, but neither Peano nor Carath\'{e}odory's existence theorem is applicable in this case. 
In addition, formally, if $x(t)=0$ for all $t>t^0+T(t^0,x^0)$ then $\dot x(t)=0=u(t)+0.5\sin(3t)$ for $t>t^0+T(t^0,x^0)$, but, by definition, $\sign(x(t))=0$ for $x(t)=0$.
\end{example}

To define a solution of the system \eqref{eq:ch2_generalsystem} with discontinuous (on $x$) right-hand side, a special regularization procedure known as the \textit{Filippov method} \cite{Filippov1988:Book} can be utilized. It extends  an ill-posed ODE to a well-posed ODI\vspace{-1mm}
\begin{equation}\label{eq:Filippov_DI}
	\dot x \in K[f](t,x),\vspace{-1mm}
\end{equation}
where $K[f]:\R\times \R^n\to 2^{\R^n}$ is defined as \cite[page 85]{Filippov1988:Book}:\vspace{-1mm}
	\begin{equation}\label{eq:Filippov_reg}
		K[f](t,x)=\bigcap_{\delta>0}\; \bigcap_{\mu(M)=0}  \;\overline{\co} f(t,x+\mathcal{B}(\delta)\backslash M),\vspace{-2mm}
	\end{equation}
	where $\overline{\co}$ denotes the convex closure, and the intersections 
	are taken over all sets $M\subset\R^n$ of measure zero and all 
	$\delta>0$  and $2^{\R^n}$ denotes the power set of $\R^n$.  The set-valued mapping $(t,x)\to K[f](t,x)$ is \textit{nonempty-valued, compact-valued, convex-valued and upper semi-continuous in $x$ for almost all $t$ }provided that $f$ is a measurable function (see, e.g., \cite{Filippov1988:Book}). Notice that \textit{Filippov regularization} \eqref{eq:Filippov_reg} does not modify continuous in $x$ function $f$.

	\begin{definition}[\small Filippov solution]\label{def:Filippov}
		\itshape An absolutely continuous function $x : \mathcal{I}\to\R^n$  is said to be
		a {Filippov solution} of \eqref{eq:ch2_generalsystem} if
 it satisfies the ODI \eqref{eq:Filippov_DI}, \eqref{eq:Filippov_reg}
		almost everywhere on $\mathcal{I}$.
	\end{definition}

Filippov method regularizes various discontinuous ODEs.
\addtocounter{example}{-1}
\begin{example}[continued]\itshape
Any solution of the discontinuous differential equation  \eqref{eq:ch2_motiv_example}  can be  prolonged 
by Filippov method for $t>t^0+T(t^0,x^0)$. The corresponding differential inclusion \vspace{-2mm}
	$$
	\dot x(t)\!\in\! -\overline{\sign}(x(t))+0.5\sin(3t), \quad \overline{\sign}(x)\!=\!\left\{
	\begin{smallmatrix}
		1& \text{if} &x>0,\\
		\mathrm{[}-1,1\mathrm{]} &\text{if} &  x=0,\\
		-1 & \text{if} & x<0,\\
	\end{smallmatrix}
	\right.\vspace{-1mm}
	$$
	obviously, admits the forward unique zero solution.
\end{example}
\begin{theorem}[Filippov Existence Theorem, 1960]\itshape
{If $f$ satisfies Filippov Condition}, then the IVP \eqref{eq:ch2_generalsystem}, \eqref{eq:ch2_initialcondition} has a Filippov solution  defined, at least, locally in time.
\end{theorem}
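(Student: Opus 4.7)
The plan is to reduce the Filippov Existence Theorem to a standard existence result for upper semi-continuous differential inclusions with integrable majorants. The starting observation, already noted in the excerpt right after the definition of $K[f]$, is that under the Filippov Condition the set-valued mapping $(t,x)\mapsto K[f](t,x)$ is nonempty-valued, compact-valued, convex-valued, and upper semi-continuous in $x$ for almost every $t$, and moreover by construction $K[f](t,x)\subseteq \overline{\co}\,\{v\in\R^n:\|v\|\leq m(t)\}$ on $(-\alpha,\alpha)\times\mathcal{B}(r)$. So the proof boils down to establishing local existence for $\dot x\in F(t,x)$ with $F=K[f]$ satisfying these classical Marchaud-type hypotheses.

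First I would fix $r>\|x^0\|$ and $\alpha>0$, take the associated $m\in L^1((-\alpha,\alpha);\R)$ from the Filippov Condition, and pick $T>0$ small enough so that $\int_{t^0}^{t^0+T}m(s)\,ds\leq r-\|x^0\|$. This guarantees that any absolutely continuous curve with $\|\dot x(t)\|\leq m(t)$ and $x(t^0)=x^0$ stays in $\mathcal{B}(r)$ on $[t^0,t^0+T]$, so all subsequent estimates live inside the region where the Filippov Condition applies.

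Next I would construct approximate solutions. A convenient route is mollification: let $\rho_\varepsilon$ be a standard mollifier on $\R^n$ and set $f_\varepsilon(t,x)=(\rho_\varepsilon *_x f)(t,x)$, which is Carathéodory and still bounded by $m(t)$. By Theorem \ref{Theorem:Car} each IVP $\dot x_\varepsilon=f_\varepsilon(t,x_\varepsilon)$, $x_\varepsilon(t^0)=x^0$ admits a Carathéodory solution on $[t^0,t^0+T]$, and $\|\dot x_\varepsilon(t)\|\leq m(t)$ gives equicontinuity via $\|x_\varepsilon(t)-x_\varepsilon(s)\|\leq\int_s^t m(\tau)d\tau$ together with the uniform bound $\|x_\varepsilon(t)\|\leq r$. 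Arzelà--Ascoli then yields a subsequence $x_{\varepsilon_k}\to x$ uniformly on $[t^0,t^0+T]$, and the derivatives $\dot x_{\varepsilon_k}$ are uniformly integrable, so up to a further subsequence they converge weakly in $L^1$ to some $g$, and $x$ is absolutely continuous with $\dot x=g$ a.e.

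The heart of the argument, and the step I expect to be hardest, is showing $\dot x(t)\in K[f](t,x(t))$ almost everywhere. The point is that $\dot x_{\varepsilon_k}(t)=f_{\varepsilon_k}(t,x_{\varepsilon_k}(t))$ is a convex combination (in fact an average against $\rho_{\varepsilon_k}$) of values of $f(t,\cdot)$ at points in a small ball around $x_{\varepsilon_k}(t)$, hence lies in $\overline{\co}\,f(t,x(t)+\mathcal{B}(\delta))$ for all sufficiently large $k$ once $\varepsilon_k$ is small and $\|x_{\varepsilon_k}(t)-x(t)\|$ is small. Applying Mazur's lemma to turn the weak $L^1$ convergence of $\dot x_{\varepsilon_k}$ into strong convergence of convex combinations, and then letting $\delta\to 0$ together with an exhaustion over null sets $M$, one concludes $\dot x(t)\in K[f](t,x(t))$ for a.e.\ $t$ using the very definition \eqref{eq:Filippov_reg}. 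This part requires care because $K[f]$ is defined by intersecting over all null sets $M$; the standard trick is to note that the mollification is invariant under modification of $f$ on null $x$-sets, so the approximations $f_{\varepsilon_k}$ can be chosen to ignore any fixed $M$, and the intersection is recovered by a diagonal argument. Finally, $x(t^0)=x^0$ holds by pointwise convergence, which completes the proof of local existence of a Filippov solution.
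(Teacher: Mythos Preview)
The paper does not actually prove this statement; it is one of the surveyed classical results for which the reader is referred to \cite{Filippov1988:Book}. That said, your proposal is essentially the same strategy the paper employs in proving its main generalization, Theorem~\ref{thm:sol_iODE} (existence of Filippov solutions for IDEs, which reduces to the present statement when $\Phi=\zero$): mollify $f$ in the $x$-variable by averaging over $\mathcal{B}(\eta)$, use Carath\'eodory existence for the smoothed problem, obtain equicontinuity and uniform boundedness from the integrable majorant $m$, extract a uniformly convergent subsequence by Arzel\`a--Ascoli, and pass to the limit in the inclusion. The only difference is cosmetic and concerns the final limiting step: you invoke weak-$L^1$ compactness of the derivatives together with Mazur's lemma, whereas the paper uses a support-function argument, establishing $v^\top \dot x(t)\le \sup_{\ell\in K[f](t,x(t))} v^\top\ell$ for every $v\in\R^n$ and almost every $t$. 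Both devices accomplish the same thing and are standard; your proof is correct.
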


	Continuity of Filippov solutions on initial data  \cite[page 98]{Filippov1988:Book} repeats the classical result if solutions of  \eqref{eq:Filippov_DI} are unique.   
	\begin{theorem}[\small Continuity of Filippov solutions on initial data]\label{thm:con_sol_Filippov_Rn}
		\itshape Let $f $ satisfy Filippov Conditions
		and let a Filippov solution $x(t,t^0,x^0)$ of the IVP \eqref{eq:ch2_generalsystem}, \eqref{eq:ch2_initialcondition}  be defined and unique on some time interval $\mathcal{I}$.
		Then $x(t,t^1,x^1)\to x(t,t^0,x^0)$ as $(t^1,x^1)\to (t^0,x^0)$ uniformly on compacts from $\mathcal{I}$.
	\end{theorem}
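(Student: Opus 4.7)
The plan is to reduce the continuity statement to a standard compactness-plus-convergence argument for differential inclusions, using the fact that the Filippov regularization $K[f]$ inherits a local integrable bound from $f$ and is upper semi-continuous with nonempty compact convex values.

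First I would argue by sequences: it suffices to show that for every sequence $(t^k,x^k)\to(t^0,x^0)$ and every compact $\mathcal{K}\subset\mathcal{I}$ containing $t^0$, one has $x(\cdot,t^k,x^k)\to x(\cdot,t^0,x^0)$ uniformly on $\mathcal{K}$. Fix such a $\mathcal{K}$ and let $x_k(\cdot):=x(\cdot,t^k,x^k)$ be any Filippov solution through $(t^k,x^k)$ (existence is guaranteed by the Filippov Existence Theorem). Using the Filippov Condition, pick $\alpha,r>0$ such that $\mathcal{K}\subset(-\alpha,\alpha)$ and $x(\cdot,t^0,x^0)$ stays inside $\mathcal{B}(r-1)$ on $\mathcal{K}$; let $m\in L^1((-\alpha,\alpha);\R)$ be the corresponding majorant. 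Because $K[f](t,x)\subset\overline{\co}\,f(t,x+\mathcal{B}(\delta)\setminus M)$, the same bound $\|v\|\leq m(t)$ holds for every $v\in K[f](t,x)$ with $x\in\mathcal{B}(r)$. Hence as long as $x_k(t)\in\mathcal{B}(r)$, we have
\begin{equation*}
\|x_k(t)-x_k(s)\|\leq\int_s^t m(\tau)\,d\tau,
\end{equation*}
which, combined with the absolute continuity of the integral of $m$ and $(t^k,x^k)\to(t^0,x^0)$, gives (for $k$ large) a uniform a priori bound $x_k(t)\in\mathcal{B}(r)$ on all of $\mathcal{K}$, uniform boundedness of $\{x_k\}$, and equicontinuity.

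Next I would invoke Arzelà–Ascoli to extract a subsequence (not relabeled) with $x_k\to\tilde x$ uniformly on $\mathcal{K}$, where $\tilde x$ is continuous and, by the integral bound above, absolutely continuous with $\tilde x(t^0)=x^0$. I then need to identify $\tilde x$ as a Filippov solution of \eqref{eq:ch2_generalsystem}, \eqref{eq:ch2_initialcondition}. By the bound $\|\dot x_k\|\leq m$, the sequence $\{\dot x_k\}$ is weakly relatively compact in $L^1(\mathcal{K};\R^n)$ (Dunford–Pettis), so a further subsequence converges weakly to some $v\in L^1$; integration identifies $v=\dot{\tilde x}$. Applying Mazur's lemma produces convex combinations $\sum_j\lambda_j^k\dot x_{k+j}\to\dot{\tilde x}$ strongly in $L^1$, hence pointwise a.e. along yet another subsequence. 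At any such Lebesgue point $t$, upper semi-continuity of $K[f](t,\cdot)$ together with its compact-convex-valued character and $x_k(t)\to\tilde x(t)$ forces $\dot{\tilde x}(t)\in K[f](t,\tilde x(t))$. Thus $\tilde x$ is a Filippov solution of the IVP with data $(t^0,x^0)$.

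By the assumed uniqueness, $\tilde x\equiv x(\cdot,t^0,x^0)$ on $\mathcal{K}$. Since every subsequence of $\{x_k\}$ admits a sub-subsequence with this same uniform limit, the full sequence converges uniformly on $\mathcal{K}$, which is the claim. The main obstacle is the passage to the limit inside the differential inclusion: this is where the specific structural properties of the Filippov regularization (upper semi-continuity plus compact-convex values, and the integrable majorant inherited from $f$) are essential, and where the weak-$L^1$/Mazur argument does the real work. Everything else—uniform bounds, equicontinuity, and the subsequence principle—is routine once the Filippov Condition provides the majorant $m$.
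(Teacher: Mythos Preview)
The paper does not supply its own proof of this theorem; it is quoted as a classical fact with a reference to \cite[page 98]{Filippov1988:Book}. Your argument is the standard compactness-plus-closure approach for upper semi-continuous, compact-convex-valued differential inclusions with an integrable majorant, and it is essentially correct.

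One step is not justified as written. The a~priori confinement $x_k(t)\in\mathcal{B}(r)$ on the whole of $\mathcal{K}$ does \emph{not} follow from the estimate $\|x_k(t)-x_k(s)\|\le\int_s^t m(\tau)\,d\tau$ together with $(t^k,x^k)\to(t^0,x^0)$: the quantity $\int_{\mathcal{K}} m$ may be far larger than the margin $1$ you left between $r-1$ and $r$, so nothing prevents $x_k$ from leaving $\mathcal{B}(r)$. The standard remedy is a continuation (or contradiction) argument: let $T_k\in\mathcal{K}$ be the first exit time of $x_k$ from $\mathcal{B}(r)$; on $[t^k,T_k]$ your equicontinuity and uniform bound are valid, so Arzel\`a--Ascoli and the limit passage apply there; uniqueness identifies the limit with $x(\cdot,t^0,x^0)$, which stays in $\mathcal{B}(r-1)$, contradicting $\|x_k(T_k)\|=r$ for large $k$. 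With this patch, the remainder (Dunford--Pettis for weak $L^1$ compactness of the derivatives, Mazur's lemma, upper semi-continuity of $K[f](t,\cdot)$, and the subsequence principle) is correct.

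As a side remark, when the paper \emph{does} carry out an analogous limit passage (in the proof of Theorem~\ref{thm:sol_iODE} for IDEs), it avoids the Dunford--Pettis/Mazur route and instead tests against all directions $v\in\R^n$, using $v^\top\dot x_k(t)\le\sup_{\ell\in Q_k(t)}v^\top\ell$ and a $\limsup$ argument to conclude $\dot x(t)\in Q(t)$ via the support-function characterization of a closed convex set. Both techniques are standard; yours is perhaps more widely quoted, the paper's is slightly more elementary.
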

\vspace{-1mm}
\subsection{Uniqueness of Filippov solutions of ODEs}
Lipschitz condition
	does not hold for discontinuous (on  state variable) ODEs.
	Filippov solution of a discontinuous ODE is understood as a solution of an ODI.  The uniqueness analysis for solutions of the ODI  is a non-trivial problem in the general case  \cite{Filippov1988:Book}, \cite{Tolstonogov2000:Book}. 
{However, \textit{One-Sided Lipschitz Condition} is still applicable for uniqueness analysis of Filippov solutions.
\begin{theorem}[Uniqueness of Filippov solutions, \cite{Filippov1988:Book}, \S10]	\itshape \label{thm:uniqne_Filippov}
If  a locally one-sided Lipschitz function $f$ satisfies  Filippov Condition then the IVP \eqref{eq:ch2_generalsystem}, \eqref{eq:ch2_initialcondition} has a unique  Filippov solution defined, at least, locally in the forward time.
\end{theorem}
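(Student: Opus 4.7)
The plan is the classical one-sided Lipschitz uniqueness argument, lifted from $f$ to its Filippov regularization $K[f]$. Let $x_1, x_2$ be two Filippov solutions of the IVP \eqref{eq:ch2_generalsystem}, \eqref{eq:ch2_initialcondition} on a common forward interval $\mathcal{I} = [t^0, t^0 + \tau]$; existence of at least one such solution is supplied by Filippov's Existence Theorem, and after shrinking $\tau > 0$ continuity of both trajectories ensures that $x_1(\mathcal{I}) \cup x_2(\mathcal{I})$ lies in some compact $\mathcal{X} \subset \R^n$ on which the locally one-sided Lipschitz hypothesis provides $\ell \in L^{\infty}_{\rm loc}(\R;\R)$ in \eqref{eq:ch2_Lipschitz}. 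I set $V(t) = \|x_1(t) - x_2(t)\|^2$, which is absolutely continuous, and
$$\dot V(t) = 2(x_1(t) - x_2(t))^\top(\dot x_1(t) - \dot x_2(t)) \quad \text{a.e.\ on } \mathcal{I},$$
with $\dot x_i(t) \in K[f](t, x_i(t))$ almost everywhere.

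The central technical step would be to transfer the one-sided Lipschitz inequality \eqref{eq:ch2_Lipschitz} from $f$ to $K[f]$, namely: for a.e.\ $t \in \mathcal{I}$ and every $v_i \in K[f](t, x_i)$,
$$(x_1 - x_2)^\top(v_1 - v_2) \leq \ell(t)\|x_1 - x_2\|^2.$$
By \eqref{eq:Filippov_reg}, for any null set $M_0$ and any $\delta > 0$ one has $v_i \in \overline{\co}\, f(t, x_i + \mathcal{B}(\delta) \setminus M_0)$, so each $v_i$ is approximable by convex combinations $\sum_k \alpha^{(i)}_k f(t, \xi^{(i)}_k)$ with sample points $\xi^{(i)}_k \in (x_i + \mathcal{B}(\delta)) \setminus M_0$. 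Choosing $M_0$ as the exceptional set on which either $f$ is undefined or the Filippov majorant $\|f(t, \cdot)\| \leq m(t)$ fails, and decomposing $x_1 - x_2 = (\xi^{(1)}_k - \xi^{(2)}_j) + (x_1 - \xi^{(1)}_k) - (x_2 - \xi^{(2)}_j)$, I would apply \eqref{eq:ch2_Lipschitz} on the first summand and the majorant on the correction to obtain
$$(x_1 - x_2)^\top\bigl(f(t, \xi^{(1)}_k) - f(t, \xi^{(2)}_j)\bigr) \leq \ell(t)(\|x_1 - x_2\| + 2\delta)^2 + 4\delta\, m(t).$$
Averaging against the product weights $\alpha^{(1)}_k \alpha^{(2)}_j$, passing to the approximation limit inside the closure, and finally sending $\delta \downarrow 0$ would deliver the claimed inequality.

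Substituting that bound into the formula for $\dot V$ gives $\dot V(t) \leq 2\ell(t) V(t)$ a.e.\ on $\mathcal{I}$; since $\ell$ is locally $L^\infty$ and $V(t^0) = 0$, Gr\"onwall's inequality forces $V \equiv 0$ on $\mathcal{I}$, i.e.\ $x_1 \equiv x_2$. A standard prolongation argument then propagates uniqueness to any interval of common forward existence.

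I expect the main obstacle to be the lifting step itself: one has to verify that the sample points $\xi^{(i)}_k$ approximating $v_i$ can simultaneously be chosen outside the exceptional null set on which the pointwise bound \eqref{eq:ch2_Lipschitz} or the Filippov majorant $m(t)$ might fail, and that the resulting double-sum estimate survives the convex closure in \eqref{eq:Filippov_reg}. This is the technical core of Filippov's argument in \S10 of \cite{Filippov1988:Book}, and it is exactly what allows the full Lipschitz norm to be weakened to the one-sided condition: only inner-product combinations of $f$-values at distinct sample pairs need to be controlled, while their magnitudes are absorbed through the uniform majorant $m(t)$.
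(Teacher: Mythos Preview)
The paper does not supply its own proof of this theorem: it is stated in the preliminaries as a surveyed result, with a bare citation to \S10 of Filippov's book. There is therefore no paper-side argument to compare against directly.

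That said, your proposal is correct and is essentially the standard argument. The paper itself later invokes the key lifting step you isolate --- that the one-sided Lipschitz inequality for $f$ passes to $K[f]$ --- as a known fact, citing \cite[page 106]{Filippov1988:Book} inside the proof of Theorem~\ref{thm:uniq_one_side_IDE}. Your sketch of how this lifting works (approximating each $v_i \in K[f](t,x_i)$ by convex combinations of $f$-values at nearby sample points outside a null set, applying \eqref{eq:ch2_Lipschitz} to the sample pairs, controlling the $O(\delta)$ correction via the Filippov majorant $m(t)$, and passing to the limit $\delta \downarrow 0$) is exactly the mechanism Filippov uses. Once that is in hand, the Gr\"onwall conclusion is routine. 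Your identification of the technical obstacle --- simultaneous avoidance of the exceptional null sets and survival under convex closure --- is also accurate; this is precisely where the argument requires care, and it is the content of the cited pages in \cite{Filippov1988:Book}.
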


Notice that \textit{One-Sided Lipschitz Condition} is still rather conservative.
For example, the right-hand side of the system \vspace{-1mm}
\begin{equation}\label{eq:ex_uniqness}
\dot x_1(t)=-\sign(x_1(t), \quad \dot x_2(t)=\sign(x_1(t))\vspace{-1mm}
\end{equation}  does not satisfy \textit{One-Sided Lipschitz Condition}, but its Filippov solution is unique. 
}
  Less conservative sufficient conditions for uniqueness of Filippov solution can be obtained for particular classes of discontinuous ODEs. 
  The Filippov regularization \eqref{eq:Filippov_reg}  of a piece-wise continuous system  gives\vspace{-1mm}
\begin{equation}\label{eq:K[f]}
	K[f](t,x)=\left\{
	\begin{smallmatrix}
		\{f(t,x)\} & \text{ if } & x\in \R^{n}\backslash \mathcal{S},\\
		\overline{\co}\left( \bigcup\limits_{j\in \mathcal{N}(x)}\left\{f^j(t,x)\right\} \right) & \text{ if } & x\in \mathcal{S},
	\end{smallmatrix}
	\right.\vspace{-1mm}
\end{equation}
where, $\mathcal{S}=\bigcup_{i=1}^N \partial G^j$ is the discontinuity set of $f$, but 
the set-valued function $\mathcal{N}: \mathcal{S} \to 2^{\{1,2,...,N\}}$ 
indicates the continuity domains $G^{j}$, which have a common boundary point $x\in\mathcal{S}$, i.e.,
$
\mathcal{N}(x)=\left\{j\in\{1,2,...,N\} : x\in \partial G^j \right\}.
$ 

Let $\mathcal{C}_{\Omega}(x)\subset \R^n$ denote the (Bouligand)  tangent cone to the set $\Omega\subset \R^n$ at $x\in \R^n$ defined as (see., e.g. \cite{Nagumno1942:PPMSJ}) \vspace{-1mm}
	\begin{equation}
		\mathcal{C}_{\Omega}(x)=\left\{z\in \R^n: \liminf_{h\to 0^+}\tfrac{\inf_{y\in \Omega}\|x+hz-y\|}{h}=0 \right\}.\vspace{-1mm}
	\end{equation}
Tangent cones characterize positively invariant sets\footnote{A set $M\!\subset\! \R^n$ is positively invariant if $x(t^0)\!\in\! M \Rightarrow x(t)\!\in\! M,t\!\geq\! t_0.$ } of dynamical systems (see, e.g., \cite{Clarke_etal1995:JDCS}, \cite{Blanchini1999:Aut}, \cite{AubinEkeland1984:Book} and references therein). 
\begin{assumption}\label{as:uniq_1}\itshape
	Let  a piece-wise one-sided Lipschitz continuous function $f$ be such that 
		$f^j(t,x)\!\notin\! \mathcal{C}_{\bar G^j}(x)$ for all  $x\!\in\! \partial G^j$,
		for almost all $t\!\in\!\R$ and  for all $j\!=\!1,2,...,N$.
\end{assumption}

 The following result is partially inspired by \cite[\S 10]{Filippov1988:Book}. 
\begin{theorem}[\small Uniqueness of solutions in sliding mode case]\label{thm:unique_SM}\itshape
	Let\vspace{-1mm} 
		\begin{equation}
		F_{\mathcal{S}}(t,x)= K[f](t,x)\cap\mathcal{C}_{\mathcal{S}}(x), \quad t\in \R, \quad x\in \mathcal{S}.\vspace{-1mm}
		\end{equation}
 If the set 
  $F_{\mathcal{S}}$ is a singleton, i.e., $F_{\mathcal{S}}(t,x)=\{f^0(t,x)\}$ for all $x\in \mathcal{S}$, for almost all $t\in \R$, with
 for some locally {one-sided} Lipschitz continuous function $f^0$, then,  {under  Assumption \ref{as:uniq_1},}  
  the IVP  \eqref{eq:ch2_generalsystem}, \eqref{eq:ch2_initialcondition}  has a Filippov solution 
   being unique,  at least, locally in the forward time. 	
\end{theorem}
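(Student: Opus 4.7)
The plan is to split the analysis into three regimes -- the open regions $G^j$, the discontinuity set $\mathcal{S}$, and the instants at which the trajectory hits $\mathcal{S}$ -- and to reduce each regime to a locally one-sided Lipschitz Carath\'{e}odory ODE so that Theorem~\ref{thm:uniqness_ODE} can be invoked. Existence is immediate: a piece-wise one-sided Lipschitz $f$ is measurable and locally bounded on every compact in $t$, hence it satisfies Filippov Condition, and the Filippov Existence Theorem supplies a local forward Filippov solution.

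First I would treat the case $x(t^0) \in G^j$. Continuity keeps $x(t)$ in the open set $G^j$ on a short forward interval, and there formula~\eqref{eq:K[f]} reduces $K[f]$ to the singleton $\{f^j(t,x)\}$, so the inclusion becomes the Carath\'{e}odory ODE $\dot x = f^j(t,x)$ with locally one-sided Lipschitz right-hand side; Theorem~\ref{thm:uniqness_ODE} then gives forward uniqueness on that interval, and this uniqueness is extended up to the first hitting time with $\mathcal{S}$ by continuous dependence on initial data.

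The heart of the argument is to establish that $\mathcal{S}$ is positively invariant under the Filippov dynamics. Suppose $x(t^0) \in \mathcal{S}$ and, for contradiction, that $x(\tau) \in G^j$ for some index $j$ and some $\tau > t^0$ arbitrarily close to $t^0$. By continuity there exists $\tau_1 \geq t^0$ with $x(\tau_1) \in \partial G^j$ and $x(t) \in G^j$ for all $t \in (\tau_1,\tau_1+\delta)$. On that subinterval the inclusion reduces again to $\dot x = f^j(t,x)$, so by uniqueness for the Lipschitz extension $f^j$ the trajectory $x$ coincides with the solution $y$ of $\dot y = f^j(t,y)$, $y(\tau_1) = x(\tau_1)$. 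Assumption~\ref{as:uniq_1} yields $f^j(\tau_1, x(\tau_1)) \notin \mathcal{C}_{\bar G^j}(x(\tau_1))$, and by the Nagumo viability characterization of positively invariant sets no trajectory of $\dot y = f^j(t,y)$ starting at $x(\tau_1)$ can remain in $\bar G^j$ in forward time. This contradicts $x(t) \in G^j \subset \bar G^j$ on $(\tau_1,\tau_1+\delta)$, so any Filippov trajectory initialized on $\mathcal{S}$ must remain on $\mathcal{S}$.

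Once confined to $\mathcal{S}$, the absolute continuity of $x$ forces $\dot x(t) \in \mathcal{C}_{\mathcal{S}}(x(t))$ for almost every $t$, and intersecting with the Filippov inclusion $\dot x(t) \in K[f](t,x(t))$ gives $\dot x(t) \in F_{\mathcal{S}}(t,x(t)) = \{f^0(t,x(t))\}$ a.e. The problem thereby collapses to the Carath\'{e}odory ODE $\dot x = f^0(t,x)$, and a final application of Theorem~\ref{thm:uniqness_ODE} to the locally one-sided Lipschitz $f^0$ delivers forward uniqueness. The main obstacle I expect is the rigorous invariance step: at points of $\mathcal{S}$ the Filippov regularization admits any velocity in the convex closure of the neighbouring $f^j$, so $f^j$ cannot be read off directly as the forward derivative at the boundary; the contradiction has to be extracted from the punctured interval $(\tau_1,\tau_1+\delta)$ combined with a right-derivative/Nagumo argument at $\tau_1$, and extra care is required when $x$ lies at the intersection of several $\partial G^j$, where $\mathcal{C}_{\mathcal{S}}(x)$ need not be convex.
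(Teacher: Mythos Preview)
Your proposal is correct and follows essentially the same route as the paper: first reduce to $\dot x=f^j$ inside each $G^j$, then use Assumption~\ref{as:uniq_1} together with a tangent-cone/Nagumo argument to conclude that $\mathcal{S}$ is forward invariant, and finally identify the sliding dynamics with $\dot x=f^0$ and invoke Theorem~\ref{thm:uniqness_ODE}. The paper's proof is terser --- it simply asserts that Assumption~\ref{as:uniq_1} makes $\R^n\setminus G^j$ strictly positively invariant for $\dot x=f^j$ --- whereas you spell out the contradiction via the right-derivative at the last boundary time $\tau_1$; this extra detail is helpful and the concerns you flag (velocity selection at $\mathcal{S}$, intersections of several $\partial G^j$) do not affect the argument since the contradiction is obtained on the punctured interval $(\tau_1,\tau_1+\delta)\subset G^j$ for a single index $j$.
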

\begin{proof}
A solution of \eqref{eq:ch2_generalsystem} coincides with the solution of the continuous ODE $\dot x(t)=f^j(t,x(t))$ as long as $x(t)\in G^j$.
 {Assumption \ref{as:uniq_1}} implies that 
the set $\R^n\backslash  G^j$ is strictly positively invariant 
for the  continuous ODE $\dot x(t)=f^j(t,x(t))$.
Hence, the system $\dot x(t)=f^j(t,x(t))$ (as well as the system \eqref{eq:ch2_generalsystem}) with the initial condition $x(t^0)=x^0\in \partial G^j$ do not have solutions belonging to  the open set $G^j$ for $t>t^0$.
Since this holds for each $G^j$, then the set $\mathcal{S}$ (being the  union of $\partial G^j$)  is  positively invariant for the system
\eqref{eq:ch2_generalsystem}.
{The condition $F_{\mathcal{S}}=\{f^0\}$} means that the ODE $\dot x(t)=f^0(t,x(t))$ defines the motion (known as \textit{sliding mode} \cite{Utkin1992:Book}, \cite{Filippov1988:Book}) of the system on $\mathcal{S}$.  {Theorem \ref{thm:uniqness_ODE}} ensures the uniqueness of this motion on the \textit{sliding set} $\mathcal{S}$.  
\end{proof}

{ Assumption \ref{as:uniq_1} guarantees that the discontinuity set $\mathcal{S}$ is the sliding set of the system. The dynamics on $\mathcal{S}$ is governed by \vspace{-1mm}
	\begin{equation}
		\dot x(t)\in F_{\mathcal{S}}(t,x(t)).\vspace{-1mm}
	\end{equation}
The system \eqref{eq:ex_uniqness} satisfies the above theorem with $F_{\mathcal{S}}=\{0\}$. }
The relay SMC system is the conventional example of discontinuous ODE satisfying the above theorem \cite{Utkin1992:Book}, \cite{Filippov1988:Book}.
\begin{example}\label{ex:2}\itshape
A simple example of discontinuous ODE with unique solutions can be  discovered in  \cite{Utkin1992:Book} for the SMC system\vspace{-1mm}
	\begin{equation}\label{eq:examp_SMC_relay}
	\dot x(t)=Ax(t)+Bu(t),\vspace{-1mm}
\end{equation} 
with $u(t)=-(CB)^{-1}CAx(t)+L \,\sign(Cx(t))$,
where $A\in \R^{n\times n}, B\in \R^{n\times m},L\in \R^{m\times m}$, $C\in \R^{m\times n}$, $\det(CB)\neq 0$ and $\sign$ of a vector is understood component-wisely. 
The dynamics of the sliding variable $\sigma=Cx$ is given by\vspace{-1mm}
\[
\dot \sigma(t)=CBL\sign(\sigma(t)).\vspace{-1mm}
\]
If $L\!=\!\rho (CB)^{-1}$, $\rho\! >\! 0$ then conditions of Theorem \ref{thm:unique_SM} are fulfilled and $f^0(t,x)\!=\!(A\!-\!B(CB)^{-1}CA)x$  for $Cx\!=\!\zero$.
\end{example}
The uniqueness of Filippov solutions of piece-wise continuous ODE without sliding sets can be proven under the assumption. 
\begin{assumption}\label{as:uniq_2}\itshape
	Let $f$ be piece-wise {one-sided Lipschitz} continuous
	and 
	$F_{\mathcal{S}}(t,x)=\emptyset$ for all $x\in \mathcal{S}$,
	for almost all $t\in\R$.
\end{assumption}
\begin{theorem}[\small Uniqueness of solutions in switching case]\label{thm:unique_no_SM}\itshape
	 Under Assumption \ref{as:uniq_2}, the IVP  \eqref{eq:ch2_generalsystem}, \eqref{eq:ch2_initialcondition}  has a Carath\'{e}odory  solution being unique,  at least, locally in the forward time. 	
\end{theorem}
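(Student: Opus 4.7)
The plan is to combine Filippov's Existence Theorem with the emptiness of $F_{\mathcal{S}}$ to upgrade a Filippov solution into a Carath\'{e}odory one, and then establish uniqueness by a case split according to whether the initial state $x^0$ lies in some interior region $G^{j_0}$ or on the discontinuity set $\mathcal{S}$. First I would observe that piece-wise one-sided Lipschitz continuity of $f$ implies measurability and local boundedness, so the Filippov Condition holds and Filippov's Existence Theorem furnishes a local Filippov solution $x$ of the IVP \eqref{eq:ch2_generalsystem}, \eqref{eq:ch2_initialcondition}.

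To promote $x$ to a Carath\'{e}odory solution, I would show that $E=\{t\in \mathcal{I}:x(t)\in \mathcal{S}\}$ has Lebesgue measure zero. Otherwise, at almost every Lebesgue density point $t\in E$ the derivative $\dot x(t)$ exists; since $x(t+h_k)\in \mathcal{S}$ along a sequence $h_k\to 0^+$ of density one, absolute continuity of $x$ forces $\dot x(t)\in \mathcal{C}_{\mathcal{S}}(x(t))$ (by the very definition of the Bouligand cone). Combined with the Filippov inclusion $\dot x(t)\in K[f](t,x(t))$ a.e., this yields $\dot x(t)\in F_{\mathcal{S}}(t,x(t))=\emptyset$, a contradiction. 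Hence $x(t)\in \R^n\setminus \mathcal{S}$ for a.e.\ $t$, and $\dot x(t)=f^{j(t)}(t,x(t))=f(t,x(t))$ a.e., so $x$ is Carath\'{e}odory.

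For uniqueness, let $x_1,x_2$ be two Carath\'{e}odory solutions emanating from $(t^0,x^0)$. If $x^0\in G^{j_0}$, continuity confines both to $G^{j_0}$ on an initial interval on which each solves $\dot x=f^{j_0}(t,x)$, so Theorem~\ref{thm:uniqness_ODE} gives $x_1\equiv x_2$ up to the first hitting time of $\partial G^{j_0}\subset \mathcal{S}$, from which common contact point I would recurse. If $x^0\in \mathcal{S}$, the previous step forces each $x_i$ out of $\mathcal{S}$ for a.e.\ $t>t^0$, and by continuity $x_i((t^0,\tau_i])\subset G^{j_i^*}$ for some $\tau_i>t^0$ and $j_i^*\in \mathcal{N}(x^0)$; the Carath\'{e}odory identity then yields $\dot x_i(t^0{+})=f^{j_i^*}(t^0,x^0)$ pointing into $G^{j_i^*}$. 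If $j_1^*\neq j_2^*$, the segment joining $f^{j_1^*}(t^0,x^0)$ and $f^{j_2^*}(t^0,x^0)$ would cross the tangent cone $\mathcal{C}_{\mathcal{S}}(x^0)$, since those vectors point into disjoint regions on opposite sides of a common piece of $\mathcal{S}$; the corresponding convex combination would then lie in $K[f](t^0,x^0)\cap \mathcal{C}_{\mathcal{S}}(x^0)=F_{\mathcal{S}}(t^0,x^0)$, contradicting Assumption~\ref{as:uniq_2}. Hence $j_1^*=j_2^*=j^*$ and Theorem~\ref{thm:uniqness_ODE} applied to $f^{j^*}$ delivers $x_1\equiv x_2$ on a forward interval.

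The main obstacle I anticipate is the tangent-cone bookkeeping at both ends: (i) rigorously extracting $\dot x(t)\in \mathcal{C}_{\mathcal{S}}(x(t))$ at density points of $E$ when $\mathcal{S}$ is merely a continuous (possibly singular) union of the $\partial G^j$, and (ii) excluding the conflicting-exit scenario $j_1^*\neq j_2^*$ when $\mathcal{N}(x^0)$ is large and $\mathcal{S}$ exhibits a corner or self-intersection at $x^0$. For (ii) a pairwise localization along the boundary piece separating $G^{j_1^*}$ from $G^{j_2^*}$ reduces the question to a smooth-hypersurface sign argument which, together with Nagumo-type viability reasoning as in \cite{Filippov1988:Book}, \S\,10, should carry the remaining work. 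One must also verify that only finitely many crossings of $\mathcal{S}$ accumulate on a short forward interval, which follows from local boundedness of $f$ combined with the transversality built into $F_{\mathcal{S}}=\emptyset$.
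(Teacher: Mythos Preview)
Your proposal is correct and follows the same strategy as the paper's proof: Filippov solutions leave $\mathcal{S}$ immediately, land in some $G^{j}$, and uniqueness then comes from the one-sided Lipschitz property of $f^{j}$ via Theorem~\ref{thm:uniqness_ODE}. The paper's argument is a two-sentence sketch that omits both the density-point step and the unique-exit-region step you supply; for the latter, your segment-crossing idea can be streamlined by observing that $K[f](t^{0},x^{0})$, being convex and disjoint from $\mathcal{C}_{\mathcal{S}}(x^{0})$, must lie in a single connected component of $\R^{n}\setminus\mathcal{C}_{\mathcal{S}}(x^{0})$, and each such component points into exactly one $G^{j}$---this handles corners of $\mathcal{S}$ without the pairwise smooth-hypersurface reduction you anticipate.
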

\begin{proof}
The condition $	F_{\mathcal{S}}(t,x)=K[f](t,x)\cap\mathcal{C}(x)=\emptyset$ means that any  Filippov solution initiated on the discontinuity set $\mathcal{S}$ immediately leaves the set $\mathcal{S}$ by entering into  some $G^j$.
Since $f^j$  satisfies Theorem \ref{thm:uniqness_ODE} then any Filippov solution  of \eqref{eq:ch2_generalsystem} is a unique
Carath\'{e}odory solution.
\end{proof}
\begin{assumption}\label{as:uniq_3} \itshape
	Let $f$ be piece-wise one-sided Lipschitz continuous
	function  and there exists a closed set $\mathcal{M}\subset \mathcal{S}\subset \R^n$: 
	\begin{itemize}
		\item 
		$
		F_{\mathcal{S}}(t,x)=\emptyset$, $\forall x\!\in\! \mathcal{S}\backslash \mathcal{M}$, for
		almost all $t\!\in\!\R$;
		\item $f^j(t,x)\!\notin\! \mathcal{C}_{\bar G^j}(x)$ for all  $x\!\in\! \partial G^j\cap\mathcal{M}$,
		for almost all $t\!\in\!\R$ and  for all $j\!=\!1,2,...,N:\partial G^j\cap\mathcal{M}\neq \emptyset$.
	\end{itemize}
\end{assumption}
The following corollary combines both above theorems.
\begin{corollary}[\itshape \small Uniqueness of solution of switching sliding mode]\label{cor:uniqness_mix}\itshape
		Under Assumption \ref{as:uniq_3}, if 
	$F_{\mathcal{S}}(t,x)\!=\!\{f^0(t,x)\}$ is a singleton for all $x\!\in\! \mathcal{M}$ and  almost all $t\!\in\! \R$ and  $f^0$ is  locally one-sided Lipschitz continuous, then
		 the IVP  \eqref{eq:ch2_generalsystem}, \eqref{eq:ch2_initialcondition}  has a Filippov solution  being unique,  at least, locally in the forward time. 	
\end{corollary}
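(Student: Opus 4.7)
The plan is to prove local forward existence and uniqueness by splitting into three cases according to the location of the initial state $x^0$: (i) $x^0$ lies in the interior of some continuity cell $G^j$; (ii) $x^0\in \mathcal{S}\backslash \mathcal{M}$; (iii) $x^0\in \mathcal{M}$. The strategy is to localize the two preceding theorems to a suitable neighborhood of $x^0$ in each case; no new machinery is needed, only a careful bookkeeping of which part of Assumption \ref{as:uniq_3} is active where.

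For case (i), continuity of any Filippov solution guarantees that $x(t)\in G^j$ on a short forward interval, over which the equation reduces to $\dot x=f^j(t,x)$ with $f^j$ locally one-sided Lipschitz and Carath\'{e}odory, so Theorem \ref{thm:uniqness_ODE} produces a unique forward solution. For case (ii), since $\mathcal{M}$ is closed I would fix an open ball $\mathcal{U}\ni x^0$ with $\mathcal{U}\cap\mathcal{M}=\emptyset$. On $\mathcal{U}$ the first bullet of Assumption \ref{as:uniq_3} coincides with Assumption \ref{as:uniq_2}, so the proof of Theorem \ref{thm:unique_no_SM} applies verbatim: $F_{\mathcal{S}}(t,x)=\emptyset$ forces the trajectory to leave $\mathcal{S}$ instantly into a single $G^j$, where it becomes a unique Carath\'{e}odory solution.

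For case (iii), the second bullet of Assumption \ref{as:uniq_3} states $f^j(t,x)\notin \mathcal{C}_{\bar G^j}(x)$ for $x\in \partial G^j\cap\mathcal{M}$; as in the proof of Theorem \ref{thm:unique_SM}, this makes $\R^n\backslash G^j$ strictly positively invariant for $\dot x=f^j(t,x)$ on a short forward interval, so no Filippov solution issuing from $x^0$ can enter an adjacent $G^j$, and the motion must remain on $\mathcal{S}$. The singleton hypothesis $F_{\mathcal{S}}(t,x)=\{f^0(t,x)\}$ on $\mathcal{M}$, together with local one-sided Lipschitz continuity of $f^0$, then yields a uniquely determined sliding motion $\dot x=f^0(t,x)$ via Theorem \ref{thm:uniqness_ODE}.

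The main obstacle is ruling out a drift from $\mathcal{M}$ into the switching remainder $\mathcal{S}\backslash \mathcal{M}$ over the short forward interval in case (iii), since $f^0$ is a priori only tangent to $\mathcal{S}$ and not necessarily to $\mathcal{M}$. I would handle this by contradiction: if the trajectory first reached $\mathcal{S}\backslash \mathcal{M}$ at some time $t^\star$, then from $t^\star$ it would be forced to leave $\mathcal{S}$ into some adjacent $G^j$ (because $F_{\mathcal{S}}=\emptyset$ there), contradicting the invariance of $\R^n\backslash G^j$ already established from the second bullet of Assumption \ref{as:uniq_3}. Combining the three cases then produces a Filippov solution defined and unique on a common small forward interval.
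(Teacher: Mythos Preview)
Your approach is essentially the same as the paper's, which presents the corollary as a direct combination of Theorems~\ref{thm:unique_SM} and~\ref{thm:unique_no_SM} without a detailed proof. Your three-case analysis is the natural way to make this combination precise, and you even identify and address the subtlety of drift from $\mathcal{M}$ into $\mathcal{S}\backslash\mathcal{M}$, which the paper leaves implicit; just note that since $\mathcal{M}$ is closed there is no ``first time'' in $\mathcal{S}\backslash\mathcal{M}$, so the contradiction should be phrased via the last time in $\mathcal{M}$ together with the emptiness of $F_{\mathcal{S}}$ on $\mathcal{S}\backslash\mathcal{M}$.
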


{Assumption \ref{as:uniq_3} ensures that $\mathcal{M}$ is the sliding set and the switching set $\mathcal{S}\backslash\mathcal{M}$ is crossed by trajectories without sliding.}
\begin{example}
	\itshape
	The system \eqref{eq:examp_SMC_relay} with 
	 $m=2$, $\sigma=(\sigma_1,\sigma_2)^{\top}$, $L=\rho(CB)^{-1} \left(\begin{smallmatrix} -1 & 2\\ -2 & -1\end{smallmatrix}\right)$ and  $\rho>0$ satisfy
	all conditions of Corollary \ref{cor:uniqness_mix} with $\mathcal{M}=\{0\}$.
\end{example}
\vspace{-2mm}
\subsection{Equivalent control method for systems modeled by ODEs}
Let us consider a control system modeled by ODE\vspace{-1mm}
\begin{equation}\label{eq:ODE_control}
	\dot x(t)=f(t,x(t),u(t,x(t))), \vspace{-1mm}
\end{equation}
where $t$ are $x$ are as before, $f\in \R\times \R^n\times \R^m\to \R^n$ and $u: \R\times \R^n\to\R^m$
is a measurable  function (e.g., SMC). In this case, some alternative methods
of regularization of discontinuous ODE 
can be proposed (e.g.,  \cite{Utkin1992:Book},  \cite{Filippov1988:Book}, \cite{PolyakovFridman2014:JFI}). 
\begin{definition}[\small Utkin solution]\label{def:Utkin}
	\itshape An absolutely continuous function $x : \mathcal{I}\to\R^n$  is said to be
	an Utkin solution of the first kind (resp., of the second kind) to \eqref{eq:ODE_control} if
	there exists a measurable function $u_{\rm eq}:\mathcal{I}\to \R^m$ such that \vspace{-1mm}
	\begin{equation}
		\dot x(t)= f(t,x(t), u_{\rm eq}(t)), \quad u_{\rm eq}(t)\in U(t,x(t))\vspace{-1mm}
	\end{equation}
	almost everywhere on $\mathcal{I}$, where $U=K[u]$ is defined by \eqref{eq:Filippov_reg} (resp., $U=(K[u_1], ..., K[u_m])^{\top}$ for $u=(u_1, u_2,...,u_m)^{\top}$).
\end{definition}

Since  $K[u](t,x(t))\subset (K[u_1](t,x(t)),...,K[u_m(t,x(t))])^{\top}$ then 
the existence of the Utkin solutions of the first  kind always implies the existnce of the Utkin solutions of the second kind. They always coincide for single input systems.
\begin{example}[Neimark 1960]\itshape A simple analysis  (see, e.g., \cite{Neimark1961:IFAC_Congress} or \cite[Example 2]{PolyakovFridman2014:JFI}) shows that the set of Utkin solutions of the first kind do not coincide with the set of Utkin solutions of the second kinds for the linear time-invariant control system \eqref{eq:examp_SMC_relay} 
	with	$x=(x_1,x_2)\in \R^2$, $A=\zero$, $B=I_2$ and 
	the static relay feedback law $u=(u_1,u_2)^{\top}, u_1=u_2=\sign(x_1)$.
\end{example}

The function $u_{eq}$ is the so-called \textit{equivalent control} \cite{Utkin1992:Book}, \cite{Filippov1988:Book}, which,
in SMC theory, is defined as a solution of a special algebraic equation (see below). 
The equivalent control method is the main tool for analysis and design of SMC systems \cite{Utkin1992:Book}, \cite{EdwardsSpurgeon1998:Book}, \cite{Shtessel_etal2014:Book}, \cite{Utkin_etal2020:Book}, \cite{Orlov2020:Book}.
The existence of $u_{\rm eq}$  follows from  \cite{Filippov1962:SIAM}.
\begin{lemma}[\small Filippov 1959]\label{lem:Filippov_selector}\itshape
	Let a function $(z,u)\in \R^p \times \R^m \mapsto w(z,u)\in \R^n$ be measurable in $z$ and continuous in $u$. Let a {compact}-valued mapping  $U :\R^{p} \to 2^{\R^m}$  
	and a function $v:\R^p\to \R^m$ be measurable. If $v(z)\in \{w(z,u): u\in U(z)\}$ almost everywhere 
	then $\exists u_{eq}:\R^p\to \R^m$ being measurable, $u_{\rm eq}(z)\in U(z)$   and  $v(z)=w(z,u_{eq}(z))$ almost everywhere. 
\end{lemma}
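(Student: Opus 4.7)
The plan is to construct the desired selector $u_{\rm eq}$ by exhibiting the set-valued map of ``admissible controls'' at each $z$ and invoking a measurable selection theorem.

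First, I would define the fiber map $\Gamma(z) := \{u \in U(z) : w(z,u) = v(z)\}$. By hypothesis $\Gamma(z) \neq \emptyset$ for almost every $z$; on the negligible exceptional set, redefine $\Gamma(z) := U(z)$ so that $\Gamma$ is nonempty valued everywhere. For each $z$, the set $\Gamma(z)$ is compact, being the intersection of the compact set $U(z)$ with the closed preimage $\{u : w(z,u) = v(z)\}$, closedness following from continuity of $w(z,\cdot)$.

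Next, I would establish that $\Gamma$ has measurable graph. Since $w$ is measurable in $z$ and continuous in $u$, it is a Carath\'eodory function, hence jointly Borel measurable on $\R^p\times\R^m$ by the standard approximation of $u$ by a countable dense family patched with measurable simple functions in $z$. Together with measurability of $v$, this makes the zero level set $Z := \{(z,u) : w(z,u) - v(z) = 0\}$ measurable. The graph of $U$ is measurable because $U$ is a measurable compact-valued map: measurability of the preimages of closed sets translates, again via a countable dense family of balls, into measurability of $\operatorname{graph}(U)$. Consequently $\operatorname{graph}(\Gamma) = Z \cap \operatorname{graph}(U)$ is measurable.

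Finally, I would invoke Aumann's measurable selection theorem (or, equivalently, Kuratowski--Ryll-Nardzewski after deducing weak measurability from graph measurability): a nonempty-valued set-valued map with measurable graph from a complete measure space into a Polish space admits a measurable selector. Applying this to $\Gamma$ yields a measurable $u_{\rm eq} : \R^p \to \R^m$ with $u_{\rm eq}(z) \in U(z)$ for all $z$ and $v(z) = w(z, u_{\rm eq}(z))$ almost everywhere. The main obstacle, and really the only nontrivial bookkeeping, is the two joint-measurability statements: joint Borel measurability of the Carath\'eodory function $w$ and graph measurability of the compact-valued $U$. Once these are in hand, the selection theorem does the remaining work, and the selector on the initial exceptional null set can be redefined arbitrarily (e.g., the lexicographically minimal point of the compact set $U(z)$) without affecting the almost-everywhere conclusion.
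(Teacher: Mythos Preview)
Your argument is correct and is the standard modern route to this result: pass to the implicit set-valued map $\Gamma(z)=\{u\in U(z):w(z,u)=v(z)\}$, verify it is nonempty, compact-valued, with measurable graph, and apply a measurable selection theorem. The bookkeeping you flag (joint measurability of the Carath\'eodory map $w$ and graph measurability of the compact-valued $U$) is exactly where the work lies, and both are standard facts.

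Note, however, that the paper does not supply its own proof of this lemma: it is stated as a preliminary result attributed to Filippov and referenced to \cite{Filippov1962:SIAM}. So there is no paper proof to compare against directly. It is worth remarking that Filippov's original 1959 argument predates the Kuratowski--Ryll-Nardzewski theorem (1965) and proceeds more constructively, building the selector by successive refinements over countable covers of $\R^m$ rather than by invoking an abstract selection principle. Your approach is cleaner and is the one found in modern references such as Castaing--Valadier; the paper itself cites Kuratowski--Ryll-Nardzewski in a nearby footnote, so your route is fully consistent with the tools the paper takes as available.
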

The set-valued mapping $t\mapsto U(t, x(t))$  from Definition \ref{def:Utkin} satisfies Lemma \ref{lem:Filippov_selector}, so an Utkin solution exists if the  ODI\vspace{-1mm}
\begin{equation}\label{eq:Ueq_DI}
	\dot x(t)\in f(t,x(t),U(t,x(t))), \quad t\in\R,\vspace{-1mm}
\end{equation}
has a solution (see, e.g., \cite{Filippov1988:Book} or \cite{PolyakovFridman2014:JFI}).
The mapping $(t,x)\mapsto  f(t,x,U(t,x))$ may be nonconvex-valued, so the conventional existence theorems for ODIs  \cite{Filippov1988:Book} are not applicable  \cite{BartoliniZolezzi1985:IEEE_TAC}. 

The equivalence of Filippov and Utkin solutions for affine-in-control systems was studied in the literature (see, e.g.,  \cite{Utkin1992:Book}).
\begin{theorem}\label{thm:UtkinZolezzi}\itshape
	Let  the system \eqref{eq:ODE_control} be affine on the input \vspace{-1mm}
	$$
	f(t,x)=a(t,x)+b(t,x)u(t,x),\vspace{-1mm}
	$$
	and  the locally essentially bounded\footnote{ A function $\xi : \mathcal{X}\subset \R^p\to \R^q$ is said to be locally essentially bounded  on $\mathcal{X}$ if $\esssup_{z\in\mathcal{Z}} \|\xi(z)\|<+\infty$ for any compact  $\mathcal{Z}\subset \mathcal{X}$.} functions 
	$a: \R\times \R^n \mapsto  \R^n$ and 
	$b: \R\times \R^n \mapsto  \R^{n\times m}$ be continuous in $x$ for almost all $t$ and measurable in $t$ for all $x$.
	If a measurable function $u:\R\times \R^{n}\mapsto \R^{m}$ is locally essentially bounded 
	then,  
 $\forall  t^0\in \R$, $\forall x^0\in \R^n$, the IVP \eqref{eq:ch2_generalsystem}, \eqref{eq:ch2_initialcondition}  has an Utkin  solution of the first kind defined, at least, locally in the time; any Utkin solution of the first kind is a Filippov solution and vice versa.
\end{theorem}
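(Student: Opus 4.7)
The plan is to exploit the affine structure to reduce the Filippov regularization of the closed-loop vector field $(t,x)\mapsto a(t,x)+b(t,x)u(t,x)$ to a product form, and then apply the measurable selection Lemma \ref{lem:Filippov_selector} to extract an equivalent control. First I would establish the pointwise identity
\begin{equation*}
K[a(\cdot,\cdot)+b(\cdot,\cdot)u(\cdot,\cdot)](t,x)=a(t,x)+b(t,x)\,K[u](t,x),
\end{equation*}
valid for almost all $t$ and all $x$. This follows because Filippov regularization \eqref{eq:Filippov_reg} leaves functions continuous in $x$ unchanged, and the closed convex hull commutes with the affine map $\omega\mapsto a(t,x)+b(t,x)\omega$. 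Hence the Filippov ODI \eqref{eq:Filippov_DI} for the closed-loop system is exactly
\begin{equation*}
\dot x(t)\in a(t,x(t))+b(t,x(t))\,K[u](t,x(t)).
\end{equation*}

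Next I would handle existence and the forward implication (Filippov $\Rightarrow$ Utkin of first kind). Since $u$ is locally essentially bounded and measurable and $a,b$ are locally essentially bounded and continuous in $x$, the map $(t,x)\mapsto f(t,x,u(t,x))$ satisfies the Filippov Condition; so by the Filippov Existence Theorem an absolutely continuous solution $x:\mathcal{I}\to \R^n$ of the ODI above exists with $x(t^0)=x^0$. Now set $v(t):=\dot x(t)-a(t,x(t))$, which is measurable and satisfies $v(t)\in b(t,x(t))\,K[u](t,x(t))$ almost everywhere. The set-valued map $t\mapsto K[u](t,x(t))$ is measurable and compact-valued (by local essential boundedness and upper semi-continuity of $K[u]$), and the map $(t,\omega)\mapsto b(t,x(t))\omega$ is measurable in $t$ and continuous in $\omega$. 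Lemma \ref{lem:Filippov_selector} therefore yields a measurable selection $u_{\rm eq}(t)\in K[u](t,x(t))$ with $v(t)=b(t,x(t))u_{\rm eq}(t)$ almost everywhere, which is exactly the data required by Definition \ref{def:Utkin}.

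For the converse direction, any Utkin solution of the first kind satisfies
$
\dot x(t)=a(t,x(t))+b(t,x(t))u_{\rm eq}(t)\in a(t,x(t))+b(t,x(t))K[u](t,x(t)),
$
and by the affine identity above this is exactly the Filippov inclusion; so the two classes of solutions coincide. The main technical obstacle I expect is the justification of the affine identity $K[a+bu]=a+bK[u]$: it requires that $a,b$ contribute nothing to the regularization, which in turn depends crucially on their continuity in $x$ (any discontinuity of $a$ or $b$ on the set where $u$ is discontinuous would introduce extra elements into the convex hull). A secondary subtlety is verifying measurability of $t\mapsto K[u](t,x(t))$ as a composition of the upper semi-continuous multimap $K[u]$ with the absolutely continuous trajectory $x(\cdot)$, so that Lemma \ref{lem:Filippov_selector} applies verbatim.
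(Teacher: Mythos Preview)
Your proposal is correct and follows essentially the same route as the paper: establish the affine identity $K[a+bu]=a+bK[u]$ from continuity of $a,b$ in $x$, invoke the Filippov existence theorem, and use the measurable selection Lemma~\ref{lem:Filippov_selector} to pass from a Filippov solution to an Utkin solution of the first kind (the paper sets up this last step in the paragraph immediately preceding the theorem rather than inside the proof itself). Your write-up is in fact more explicit than the paper's two-line justification, and the two technical caveats you flag are exactly the points where the argument relies on the stated hypotheses.
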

This follows from Filippov existence theorem and the identity\vspace{-1mm}
\[
K[f](t,x,u(t,x))=a(t,x)+b(t,x)K[u](t,x),\vspace{-1mm}
\]
being fulfilled for almost all $t$, due to  continuity of $a$, $b$ in $x$.

A sufficient condition of equivalence of Utkin solutions of the first  and of the second kinds is presented, for example, in \cite[Theorem 14, page 44]{Zolezzi2002:inBook} for affine-in-control systems.

If $\mathcal{S}=\{x\in \R^n: s(x)=\zero\}$, $s\in C^{1}(\R^n;\R^k)$ is a sliding set of the system  \eqref{eq:ODE_control} then the equivalent control 
on $\mathcal{S}$ is defined as a solution of the algebraic equation  \cite{Utkin1992:Book}:\vspace{-1mm}
\begin{equation}\label{eq:equation_for_u_eq}
\tfrac{\partial s(x)}{\partial x}f(t,x,u_{\rm eq})=\zero.\vspace{-1mm}
\end{equation}
Let  this algebraic equation have a  unique solution $u_{\rm eq}(t,x)$. If  $u_{\rm eq}(t,x)\in U(t,x)$ for all $x\in \mathcal{S}$ and for almost all $t\in \R$  and $f_0(t,x):=f(t,x, u_{eq}(t,x))$ satisfies Carath\'{e}odory existence theorem then  the system \eqref{eq:ch2_generalsystem} has an Utkin solution sliding on $\mathcal{S}$ and governed by the ODE $\dot x=f_0(t,x)$. If  \vspace{-1mm}
\begin{equation}\label{eq:con_utkin_unique}
	\det\left(\tfrac{\partial s(x)}{\partial x}b(t,x)\right)\neq 0, \quad \forall x\in \mathcal{S}, \quad \forall t\in \R,\vspace{-1mm}
\end{equation}
for the affine-in-control system from Theorem \ref{thm:UtkinZolezzi}, then \cite{Utkin1992:Book}:
\vspace{-1mm}
 \begin{equation}
 	u_{\rm eq}(t,x)=-\left(\tfrac{\partial s(x)}{\partial x}b(t,x)\right)^{-1}\tfrac{\partial s(x)}{\partial x}a(t,x), \quad x\in \mathcal{S}.\vspace{-2mm}
 \end{equation} 
 If $(t,x)\!\mapsto\!\!f^0(t,x)\!=\!a(t,x)\!-\!b(t,x)\!\left(\!\frac{\partial s(x)}{\partial x}b(t,x)\!\right)^{\!\!-\!1}\!\!\frac{\partial s(x)}{\partial x}a(t,x)$ is locally one-sided Lipschitz on $\mathcal{S}$ continuous function  then \textit{any Filippov solution belonging to the sliding set is unique}.
 This does not mean that all solutions with $x_0\in \mathcal{S}$ are unique, since  the sliding set $\mathcal{S}$ may not be strongly positively invariant.
 
 \vspace{-1mm}

\section{Discontinuous Integro-differential equations}\label{sec:IDE}
Let us consider  the integro-differential equation\vspace{-1mm}
\begin{equation}\label{eq:iODE}
	\dot x(t)\!=\!f(t,x(t))\!+\!\int^t_{t^0} \!\Phi(t,\tau) \tilde f(\tau,x(\tau)) d\tau, \quad t>t^0,\vspace{-1mm}
\end{equation}
where the functions $\Phi:\Delta\to  \R^{n\times n}$ with  $\Delta=\{(t,\tau)\in \R\times \R: t\geq \tau\}$
and $f,\tilde f:\R\times \R^n\to\R^n$ are measurable.  
\begin{assumption}\label{as:ex_IDE}\itshape
		Let $\Phi$ be locally essentially bounded on $\Delta$ and $f,\tilde f$ satisfy Filippov Condition. 
\end{assumption}
\vspace{-2mm}
\subsection{Well-posed  IDEs}
Continuous integro-differential equations of the form \eqref{eq:iODE} are well studied in the literature (see, e.g.,  \cite{Lakshmikantham_etal1995:Book}). Below  we use the notation $\mathcal{I}=[t^0,t^0+\alpha)$ with $\alpha>0$ or $\mathcal{I}=[t^0,+\infty)$.
\begin{definition}[\small Classical solution of IDE]\itshape
	A  function $ x\in C^1(\mathcal{I};  \R^n)$ with $x(t^0)=x^0$ is said to be a classical solution of the IVP   \eqref{eq:iODE},  \eqref{eq:ch2_initialcondition} if
	it satisfies  \eqref{eq:iODE} everywhere on  $\mathcal{I}$.
\end{definition}
The existence theorem of classical solutions of the integro-differential equation \eqref{eq:iODE} repeats the Peano existence theorem.
\begin{theorem}[see, e.g., \cite{Lakshmikantham_etal1995:Book}]\label{thm:peano_iODE}\itshape
If $f$, $\tilde f$, $\Phi$ are continuous  functions  then  the IVP \eqref{eq:iODE}, \eqref{eq:ch2_initialcondition} has a classical solution defined, at least, locally in the forward time.
	\end{theorem}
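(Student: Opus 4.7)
The plan is to reduce the IVP to a fixed-point problem and apply the Schauder fixed-point theorem, exactly as one does for the classical Peano theorem, with the extra integral term absorbed into the operator.

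First I would integrate \eqref{eq:iODE} from $t^0$ to $t$ to obtain the equivalent integral form
\begin{equation*}
x(t) = x^0 + \int_{t^0}^{t} f(s, x(s))\, ds + \int_{t^0}^{t}\!\! \int_{t^0}^{s} \Phi(s,\tau)\, \tilde f(\tau, x(\tau))\, d\tau \, ds.
\end{equation*}
Fix a neighborhood $\{(t,x) : |t-t^0|\le a,\ \|x-x^0\|\le b\}$ on which $\|f\|\le M_1$, $\|\tilde f\|\le M_2$, and on $[t^0,t^0+a]^2$ the bound $\|\Phi\|\le M_3$ holds (all finite by continuity on a compact set). Choose $\alpha\in(0,a]$ so that $\alpha M_1 + \tfrac{1}{2}\alpha^2 M_2 M_3 \le b$, and let $B$ be the closed convex subset of $C([t^0,t^0+\alpha];\R^n)$ consisting of functions $x$ with $x(t^0)=x^0$ and $\|x(t)-x^0\|\le b$ for all $t$. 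Define $T:B\to C([t^0,t^0+\alpha];\R^n)$ by the right-hand side of the integral equation above.

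The three things to check are standard. (i) \emph{$T(B)\subset B$}: the choice of $\alpha$ makes $\|(Tx)(t)-x^0\|\le \alpha M_1 + \tfrac{1}{2}\alpha^2 M_2 M_3 \le b$. (ii) \emph{$T$ is continuous on $B$}: if $x_k\to x$ uniformly on $[t^0,t^0+\alpha]$, then by uniform continuity of $f,\tilde f$ on the compact set $[t^0,t^0+\alpha]\times \overline{\mathcal{B}(x^0,b)}$ we get $f(s,x_k(s))\to f(s,x(s))$ and $\tilde f(\tau,x_k(\tau))\to \tilde f(\tau,x(\tau))$ uniformly, so $Tx_k\to Tx$ uniformly by dominated convergence (or direct uniform estimate). (iii) \emph{$T(B)$ is equicontinuous}: for $x\in B$ and $t_1<t_2$ in $[t^0,t^0+\alpha]$,
\begin{equation*}
\|(Tx)(t_2)-(Tx)(t_1)\| \le M_1(t_2-t_1) + M_2 M_3\, \alpha\, (t_2-t_1),
\end{equation*}
which is independent of $x$. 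Combined with the uniform bound from (i), Arzelà--Ascoli gives that $T(B)$ is relatively compact in $C([t^0,t^0+\alpha];\R^n)$.

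Now Schauder's fixed-point theorem yields $x\in B$ with $Tx=x$. Because $f$, $\tilde f$, $\Phi$ are continuous, the integrand $s\mapsto f(s,x(s)) + \int_{t^0}^s \Phi(s,\tau)\tilde f(\tau,x(\tau))\,d\tau$ is continuous in $s$, so $x$ is $C^1$ on $[t^0,t^0+\alpha)$ and satisfies \eqref{eq:iODE} pointwise together with $x(t^0)=x^0$. The only mildly delicate point is the continuity of the inner kernel map $s\mapsto \int_{t^0}^s \Phi(s,\tau)\tilde f(\tau,x(\tau))\,d\tau$, which follows because $\Phi$ is jointly continuous on the compact triangle $\{(s,\tau):t^0\le \tau\le s\le t^0+\alpha\}$ and hence uniformly continuous there; I expect this, rather than the fixed-point argument itself, to be the only place where the IDE structure requires more care than the classical Peano proof.
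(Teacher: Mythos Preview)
Your argument is correct. The paper itself does not supply a proof of this theorem; it simply records it as a known result and points to \cite{Lakshmikantham_etal1995:Book}. The Schauder fixed-point route you take---integral reformulation, a priori bounds on a compact cylinder, self-map plus continuity plus Arzel\`a--Ascoli compactness---is exactly the standard Peano-type argument used in that reference for Volterra integro-differential equations, so there is nothing to contrast.

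One cosmetic remark: in step (i) you implicitly use that $(Tx)(t^0)=x^0$, which is immediate from the integral formula but worth stating since membership in $B$ requires it. Otherwise the write-up is complete.
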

Carath\'{e}odory solutions of integro-differential equations are defined similarly to solutions of ODEs  \cite{OReganMeehan1998:Book}.
\begin{definition}[\small Carath\'{e}odory solution of IDE]\itshape
	An absolutely  continuous function $ x:\mathcal{I}\to \R^n$ with  $x(t^0)=x^0$ is said to be a Carath\'{e}odory (or strong) solution of the IVP   \eqref{eq:iODE},  \eqref{eq:ch2_initialcondition} if
	it satisfies  \eqref{eq:iODE} almost everywhere on  $\mathcal{I}$.
\end{definition}

Carath\'{e}odory solutions of IDEs are well-studied too \cite{OReganMeehan1998:Book}.
\begin{theorem}\label{thm:Caratheodory_iODE}\itshape
	Let Assumption \ref{as:ex_IDE} hold.  If  $f$,$\tilde f$ satisfy 
		Carath\'{e}\-odory  Condition then the IVP \eqref{eq:iODE}, \eqref{eq:ch2_initialcondition} has a Carath\'{e}\-odory  solution defined, at least, locally in the forward time.
\end{theorem}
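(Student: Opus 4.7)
The plan is to apply Schauder's fixed point theorem to the integral operator associated with \eqref{eq:iODE}, mirroring the classical Carath\'{e}odory argument for ODEs. An absolutely continuous function $x$ satisfies \eqref{eq:iODE}, \eqref{eq:ch2_initialcondition} almost everywhere on $[t^0, t^0+\alpha]$ if and only if it is a fixed point of the operator
\begin{equation*}
(Tx)(t) = x^0 + \int_{t^0}^t f(s, x(s))\, ds + \int_{t^0}^t \int_{t^0}^s \Phi(s,\tau)\, \tilde f(\tau, x(\tau))\, d\tau\, ds
\end{equation*}
acting on $C([t^0, t^0+\alpha]; \R^n)$. I would search for a fixed point on the closed convex subset $K_{\alpha, r} = \{ x \in C([t^0, t^0+\alpha]; \R^n) : \sup_t \|x(t) - x^0\| \leq r\}$ for suitably chosen $r > 0$ and small enough $\alpha > 0$.

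Fixing $r>0$ and $\alpha_0 > 0$, the Carath\'{e}odory Condition applied on $\mathcal{B}(\|x^0\| + r)$ yields some $m \in L^1([t^0, t^0+\alpha_0]; \R)$ dominating both $\|f(t, \cdot)\|$ and $\|\tilde f(t, \cdot)\|$ on that ball, while Assumption \ref{as:ex_IDE} gives a constant $M > 0$ with $\|\Phi(s, \tau)\| \leq M$ a.e. on $\{(s,\tau) \in \Delta : t^0 \leq \tau \leq s \leq t^0+\alpha_0\}$. For $x \in K_{\alpha, r}$ with $\alpha \leq \alpha_0$,
\begin{equation*}
\|(Tx)(t) - x^0\| \leq \int_{t^0}^{t^0+\alpha} m(s)\, ds + M\alpha \int_{t^0}^{t^0+\alpha} m(\tau)\, d\tau,
\end{equation*}
and shrinking $\alpha$ (using absolute continuity of $s \mapsto \int_{t^0}^s m$) makes the right-hand side at most $r$; hence $T$ self-maps $K_{\alpha, r}$. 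An analogous estimate $\|(Tx)(t_2) - (Tx)(t_1)\| \leq \int_{t_1}^{t_2} m(s)\, ds + M(t_2 - t_1) \int_{t^0}^{t^0+\alpha} m(\tau)\, d\tau$ yields equicontinuity uniform in $x \in K_{\alpha, r}$, so Arzel\`a-Ascoli gives precompactness of $T(K_{\alpha, r})$.

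Continuity of $T$ on $K_{\alpha, r}$ follows from continuity of $f, \tilde f$ in $x$ (the first Carath\'{e}odory property) combined with Lebesgue's dominated convergence theorem: if $x_n \to x$ uniformly, then $f(s, x_n(s)) \to f(s, x(s))$ and $\tilde f(\tau, x_n(\tau)) \to \tilde f(\tau, x(\tau))$ pointwise a.e. with common $L^1$-dominant $m$, and a second application of dominated convergence handles the outer integration of the double-integral term. Schauder's theorem then yields a fixed point, which is the desired strong solution. The main technical point to handle carefully will be the measurability of $s \mapsto \int_{t^0}^s \Phi(s, \tau)\, \tilde f(\tau, x(\tau))\, d\tau$ so that $Tx$ is a well-defined continuous function; this follows from joint measurability of $\Phi$ on $\Delta$ (implicit in its local essential boundedness) and of $(s,\tau) \mapsto \tilde f(\tau, x(\tau))$ together with Fubini's theorem, after which the fixed-point argument is routine.
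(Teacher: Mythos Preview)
The paper does not supply its own proof of this theorem: it is stated as a known result with a pointer to \cite{OReganMeehan1998:Book}. Your Schauder fixed-point argument is exactly the standard route taken in that literature and is correct in outline and in the details you give; the only minor remark is that joint measurability of $\Phi$ is not merely ``implicit in its local essential boundedness'' but is part of the standing hypotheses at the beginning of Section~\ref{sec:IDE}, so you may invoke it directly.
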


Similarly to ODEs, a Lipschitz condition  is sufficient for uniqueness of solutions of IDEs (see, e.g., \cite{Lakshmikantham_etal1995:Book}, \cite{OReganMeehan1998:Book}).
\begin{theorem} \itshape \label{thm:uniqness_iODE} 	Let Assumption \ref{as:ex_IDE} be fulfilled and $f$, $\tilde f$ satisfy 
	Carath\'{e}\-odory  Condition. If $f$ is locally one-sided  Lipschitz  and $\tilde f$ is locally Lipschitz, then
  the IVP \eqref{eq:iODE}, \eqref{eq:ch2_initialcondition} has a  unique solution defined, at least, locally in the forward time. 
\end{theorem}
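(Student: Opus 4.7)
Existence is immediate from Theorem~\ref{thm:Caratheodory_iODE}, since a locally (one-sided) Lipschitz function that satisfies Carath\'{e}odory Condition certainly continues to satisfy it and Assumption~\ref{as:ex_IDE} is retained. So the real work is uniqueness, which I would establish by a Gronwall-type estimate on $V(t)=\|e(t)\|^2$, where $e=x_1-x_2$ is the difference of two putative Carath\'{e}odory solutions on a common interval $[t^0,t^0+\alpha)$.

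First I would localize to a compact subinterval $[t^0,t^0+\alpha^*]$ on which both trajectories remain in a ball $\mathcal{B}(r)$. Then the one-sided Lipschitz ``constant'' $\ell$ for $f$ and the Lipschitz ``constant'' $\tilde\ell$ for $\tilde f$ are essentially bounded by some $L$ on $[t^0,t^0+\alpha^*]$, and $\|\Phi(t,\tau)\|$ is essentially bounded by some $M_\Phi$ on the triangle $\{(t,\tau):t^0\leq\tau\leq t\leq t^0+\alpha^*\}$ by Assumption~\ref{as:ex_IDE}. The function $V$ is absolutely continuous with $V(t^0)=0$ and $\dot V(t)=2 e(t)^\top \dot e(t)$ almost everywhere.

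Subtracting the two copies of \eqref{eq:iODE} and using the one-sided Lipschitz bound on $f$ gives
\[
2 e(t)^\top\bigl(f(t,x_1(t))-f(t,x_2(t))\bigr)\leq 2\ell(t) V(t),
\]
while for the memory term I would apply $\|\tilde f(\tau,x_1)-\tilde f(\tau,x_2)\|\leq \tilde\ell(\tau)\|e(\tau)\|$, then Young's inequality $2ab\leq a^2+b^2$, then Cauchy--Schwarz, to get
\[
2 e(t)^\top\!\int_{t^0}^t\!\Phi(t,\tau)\bigl(\tilde f(\tau,x_1(\tau))-\tilde f(\tau,x_2(\tau))\bigr)d\tau\leq V(t)+M_\Phi^2 L^2 \alpha^*\!\int_{t^0}^t\! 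V(\tau)\,d\tau.
\]
Collecting, $\dot V(t)\leq (2L+1)V(t)+M_\Phi^2 L^2\alpha^*\int_{t^0}^t V(\tau)\,d\tau$. Integrating on $[t^0,t]$, swapping the order of integration in the double integral by Fubini, and using $V(t^0)=0$ yield $V(t)\leq C\int_{t^0}^t V(s)\,ds$ for a constant $C$ depending only on $L$, $M_\Phi$ and $\alpha^*$. Gronwall's lemma then forces $V\equiv 0$ on $[t^0,t^0+\alpha^*]$, and arbitrariness of $\alpha^*$ gives uniqueness on the entire interval of existence.

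The main subtlety is the memory coupling: $\dot V(t)$ depends on $V(\tau)$ for all $\tau\in[t^0,t]$, so a purely differential Gronwall does not close directly. The detour through Young's inequality and Cauchy--Schwarz keeps the bound quadratic in $V$ and, after one more integration and Fubini, reduces everything to the classical integral Gronwall inequality. This also explains why full (two-sided) Lipschitzness is demanded of $\tilde f$ while only one-sidedness suffices for $f$: the memory term must be controlled in norm and cannot benefit from dissipativity, whereas the instantaneous term enters only through $e(t)^\top(f(t,x_1)-f(t,x_2))$.
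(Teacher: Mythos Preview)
Your proposal is correct and follows essentially the same route as the paper. The paper states Theorem~\ref{thm:uniqness_iODE} without proof (deferring to \cite{Lakshmikantham_etal1995:Book}, \cite{OReganMeehan1998:Book}), but the proof it gives for the Filippov analogue (Theorem~\ref{thm:uniq_one_side_IDE}) is precisely your argument: compute $\tfrac{d}{dt}\|e(t)\|^2$, bound the instantaneous term by one-sided Lipschitzness, bound the memory term via Young and Cauchy--Schwarz to get $\dot V\le c_1 V+c_2\int_{t^0}^t V$, and conclude by a Gronwall-type inequality. The only cosmetic difference is that the paper cites Pachpatte's integro-differential Gronwall inequality directly, whereas you integrate once more, swap the order of integration by Fubini, and invoke the classical Gronwall lemma; both closures are standard and equivalent here.
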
	
\vspace{-2mm}

\subsection{Regularization of ill-posed IDEs by Filippov method}
If $f,\tilde f$ are  discontinuous in $x$ functions then the IVP \eqref{eq:iODE}, \eqref{eq:ch2_initialcondition}  may be ill-posed. Let us regularize it by Filippov method.
\begin{definition}[Filippov solution of IDE] \itshape
\quad An absolutely continuous function $x: \mathcal{I}\to \R^n$  is said to be a Filippov solution of the IVP  \eqref{eq:iODE}, \eqref{eq:ch2_initialcondition} if  $x(t^0)=x^0$ and, almost everywhere on $\mathcal{I}$, it satisfies  the integral differential inclusion\footnotemark\vspace{-1mm}
\begin{equation}\label{eq:Filippov_iODE}
	\begin{split}
\dot x(t)\in& K[f](t,x(t))+\int^t_{t^0} \Phi(t,\tau) K[\tilde f](\tau,x(\tau)) d\tau. \vspace{-1mm}
\end{split}
\end{equation}
\end{definition}
\footnotetext{The integral of the set-valued function is understood in the sense of Aumann \cite{Aumann1965:JMAA}.}
If  $\Phi=\zero$ or $\tilde f$ is independent of $x$, the definition is equivalent to the definition of Filippov solution of discontinuous ODE.
\begin{theorem}\label{thm:sol_iODE}\itshape
If Assumption \ref{as:ex_IDE} is fulfilled, then
for any $x^0\in \R^n$ and any $t^0\in \R$,  the IVP \eqref{eq:iODE}, \eqref{eq:ch2_initialcondition} has a Filippov solution defined, at least, locally in the forward time. 
\end{theorem}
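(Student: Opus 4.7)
The plan is a set-valued fixed-point argument on a compact convex subset of $C([t^0,t^0+\alpha];\R^n)$. First I localize the problem: pick $r>0$ and, using the Filippov Condition on $f,\tilde f$, extract integrable majorants $m_1,m_2\in L^1((t^0-\alpha,t^0+\alpha);\R)$ with $\sup_{v\in K[f](t,x)}\|v\|\leq m_1(t)$ and $\sup_{v\in K[\tilde f](t,x)}\|v\|\leq m_2(t)$ for a.e.\ $t$ and every $x\in\mathcal{B}(x^0,r)$; the Filippov regularizations inherit these bounds since the convex closure of a bounded set is bounded by the same constant. Combined with the local essential bound $\|\Phi(t,\tau)\|\leq M$ supplied by Assumption \ref{as:ex_IDE}, any candidate solution satisfies $\|\dot x(t)\|\leq m_1(t)+M\int_{t^0}^{t}m_2(\tau)\,d\tau$, and I shrink $\alpha>0$ so that the corresponding trajectories cannot leave the closed ball of radius $r$ around $x^0$ on $[t^0,t^0+\alpha]$.

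Next, let $X\subset C([t^0,t^0+\alpha];\R^n)$ be the convex, bounded, uniformly equicontinuous (hence compact by Arzel\`a-Ascoli) set of absolutely continuous $x$ with $x(t^0)=x^0$, $\|x(t)-x^0\|\leq r$, and $\|\dot x(t)\|\leq m_1(t)+M\int_{t^0}^{t}m_2(\tau)\,d\tau$ a.e. Define $\mathcal{T}:X\to 2^X$ by letting $\mathcal{T}(x)$ be the set of $y(t)=x^0+\int_{t^0}^{t}\phi(s)\,ds+\int_{t^0}^{t}\int_{t^0}^{s}\Phi(s,\tau)\psi(\tau)\,d\tau\,ds$, where $\phi,\psi$ range over measurable selections of $s\mapsto K[f](s,x(s))$ and $\tau\mapsto K[\tilde f](\tau,x(\tau))$. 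Nonemptiness of $\mathcal{T}(x)$ follows from Lemma \ref{lem:Filippov_selector} applied to these measurable compact-valued mappings; convexity follows from the convex-valuedness of $K[f]$ and $K[\tilde f]$ together with linearity of the Aumann integral; and $\mathcal{T}(X)\subset X$ is exactly the a priori bound. Once upper semi-continuity of $\mathcal{T}$ on $X$ is established, the Kakutani-Fan-Glicksberg fixed point theorem delivers $x^{\star}\in\mathcal{T}(x^{\star})$, which is by construction a local Filippov solution of the IVP.

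The main obstacle is the closed-graph verification: given $x_n\to x^{\star}$ in $C$ and $y_n\to y^{\star}$ with $y_n\in\mathcal{T}(x_n)$ represented by selections $\phi_n,\psi_n$, I must exhibit selections $\phi^{\star}(s)\in K[f](s,x^{\star}(s))$ and $\psi^{\star}(\tau)\in K[\tilde f](\tau,x^{\star}(\tau))$ that reproduce $y^{\star}$. The dominated bounds $\|\phi_n\|\leq m_1$, $\|\psi_n\|\leq m_2$ provide weak-$L^1$ relative compactness via the Dunford-Pettis criterion; extracting a subsequence and invoking Mazur's theorem yields convex combinations that converge almost everywhere to some $\phi^{\star},\psi^{\star}$, and the upper semi-continuity in $x$ of the Filippov regularizations recalled below formula \eqref{eq:Filippov_reg}, together with convex-valuedness, forces $\phi^{\star}(s)\in K[f](s,x^{\star}(s))$ and $\psi^{\star}(\tau)\in K[\tilde f](\tau,x^{\star}(\tau))$ a.e. Since $\Phi(t,\cdot)$ acts as a bounded measurable kernel, passage to the limit under both integrals is standard, giving $y^{\star}\in\mathcal{T}(x^{\star})$; any fixed point then satisfies \eqref{eq:Filippov_iODE} by definition, establishing local forward existence.
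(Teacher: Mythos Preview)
Your proposal is correct but follows a genuinely different route from the paper. The paper proceeds by \emph{approximation}: it mollifies $f$ and $\tilde f$ into Carath\'eodory functions $f^{\eta},\tilde f^{\eta}$, invokes the Carath\'eodory existence theorem for IDEs (Theorem~\ref{thm:Caratheodory_iODE}) to produce solutions $x^{\eta}$, shows the family $\{x^{\eta}\}$ is equicontinuous and uniformly bounded, extracts a uniform limit via Arzel\`a--Ascoli, and then verifies that the limit satisfies the integro-differential inclusion using support-function arguments and convergence results for Aumann integrals. Your argument, by contrast, works directly on the integral formulation via a Kakutani--Fan--Glicksberg fixed point, with weak-$L^1$ compactness (Dunford--Pettis) and Mazur's lemma to establish the closed-graph property. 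Your approach is more self-contained, since it does not rely on the prior Carath\'eodory existence result for IDEs; the paper's approach, on the other hand, sets up machinery (the $\eta$-enlargements $F^{\eta},\tilde F^{\eta}$) that it immediately reuses in the proof of Theorem~\ref{thm:continuity_iODE} on continuous dependence.

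One small correction: your appeal to Lemma~\ref{lem:Filippov_selector} for the nonemptiness of $\mathcal{T}(x)$ is a misattribution. That lemma is an \emph{implicit} selector result (given $v(z)\in\{w(z,u):u\in U(z)\}$, extract a measurable $u_{\rm eq}$), not a plain measurable-selection theorem. What you actually need is the Kuratowski--Ryll-Nardzewski selection theorem applied to the measurable, compact-valued maps $s\mapsto K[f](s,x(s))$ and $\tau\mapsto K[\tilde f](\tau,x(\tau))$; the paper itself points to this in a footnote. This is a citation fix, not a gap in the argument.
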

\begin{proof}  1) The set-valued mappings $K[f]$ and $K[\tilde f]$  are  nonempty-valued, convex-valued, compact-valued and upper semi-continuous  in $x$ for almost all $t$ (see, e.g., \cite[page 85]{Filippov1988:Book}).
Let us define the functions 
$f^{\eta},\tilde f^{\eta}:\R\times \R^{n}\to \R$  as follows\vspace{-1mm}
\[
	f^{\eta}(t,x)\!=\!\tfrac{1}{\eta}\!\!
	\int\limits_{\mathcal{B}(\eta)} \!\!\!f(t,x+y) dy, \;\;\;
	\tilde f^{\eta}(t,x)\!=\!\tfrac{1}{\eta}\!\!
	\int\limits_{\mathcal{B}(\eta)} \!\!\tilde f(t,x+y) dy,\vspace{-1mm}
\] 
where $\eta>0$.
In  \cite[Theroem 8, page 85]{Filippov1988:Book} it is shown that these functions satisfy Carath\'{e}odory Condition
and \vspace{-1mm}
\[
\begin{split}
	f^{\eta}(t,x)\in&F^{\eta}(t,x):=\overline{\co} \bigcup_{y\in \mathcal{B}(\eta),z\in \mathcal{B}(\eta)}K[f](t,x+y)+z,  \\
	\tilde f^{\eta}(t,x)\in &\tilde F^{\eta}(t,x):=\overline{\co} \bigcup_{y\in \mathcal{B}(\eta),z\in \mathcal{B}(\eta)}K[\tilde f](t,x+y)+z,  \\ 
\end{split} \vspace{-1mm}
\] 
for all $x\in \R^n$ and almost all $t\in \R$. 
By Theorem \ref{thm:Caratheodory_iODE}, the IDE\vspace{-1mm}
\begin{equation}\label{eq:IDE_eta}
\dot x^{\eta}(t)=f^{\eta}(t,x)+\int^t_{t^0} \Phi(t,\tau) \tilde f^{\eta}(\tau,x^{\eta}(\tau)) d\tau\vspace{-1mm}
\end{equation}
has a Carath\'{e}odory solution.  {By definition of Aumann's Integral \cite{Aumann1965:JMAA}, for almost all $t\in\mathcal{I}$}, this solution satisfies \vspace{-1mm}
\begin{equation}\label{eq:IDI_eta}
\dot x^{\eta}(t)\in F^{\eta}(t,x^{\eta}(t))+\int^t_{t^0} \Phi(t,\tau) \tilde F^{\eta}(\tau,x^{\eta}(\tau)) d\tau.\vspace{-1mm}
\end{equation}
2) 
Since $\Phi$ is locally essentially bounded on $\Delta$ then, for any $\alpha>0$ there exists $ \overline{\Phi}>0$ such that \vspace{-1mm}
\[
\esssup_{t_0\leq \tau\leq t\leq t_0+\alpha}\|\Phi(t,\tau)\|\leq \overline{\Phi}.\vspace{-1mm}
\]
Let $\bar \eta=\min\{1/4,1/(4\overline{\Phi}\alpha)\}$.
Since, {by Filippov Condition},  for  any $\alpha>0$ and  any $r>0$ there exist  a Lebesgue integrable  function $m:[t^0,t^0+\alpha] \to \R_+$ such that $\|f(t,x)\|\leq m(t)$ and  $\|\tilde f(t,x)\|\leq m(t)$  for almost all $t\in [t^0,t^0+\alpha]$ and almost all $x\in x^0+\mathcal{B}(r+\bar \eta)$  then (see, e.g., \cite[page 76]{Filippov1988:Book})\vspace{-1mm}
$$
K[f](t,x)\subset \mathcal{B}(m(t)), \quad K[\tilde f](t,x)\subset \mathcal{B}(m(t))\vspace{-1mm}
$$
for almost all $t\in \mathcal{I}=[t^0,t^0+\alpha]$ and for all $x\in x^0+\mathcal{B}(r+\bar \eta)$, and (see, e.g., \cite[page 82]{Filippov1988:Book})\vspace{-1mm}
$$
F^{\eta}(t,x)\subset \mathcal{B}(m(t)+\eta), \quad \tilde F^{\eta}(t,x)\subset \mathcal{B}(m(t)+\eta)\vspace{-1mm}
$$
for almost all $t\!\in\! \mathcal{I}$, all $x\!\in\! x^0+\mathcal{B}(r+\bar \eta)${, all $\eta\!\in\! (0,\bar \eta)$}. 
Hence, 
\vspace{-1mm}
$$
\|\dot x^{\eta}(t)\|\leq m(t)+\eta+\overline{\Phi}\int^t_{t^0}m(\tau)+\eta \,d\tau
\vspace{-1mm}$$
for almost  all $t\in \mathcal{I}$ and all $\eta\in (0,\bar \eta]$.
Since  $m$ is Lebesgue integrable on $\mathcal{I}$ then 
$M:=\int^{t^0+\alpha}_{t^0} m(\tau) d\tau<+\infty$ and 
for any $\epsilon>0$ there exist $$\delta\in (0, \epsilon \min \{1,1/(4\overline{\Phi}M)\})$$
such that 
for all $[a,b]\subset \mathcal{I}: |b-a|\leq \delta$ and 
all $\eta\in \left(0, \bar \eta \right)$  it holds\vspace{-1mm}
\[
\begin{split}
	\|x^{\eta}(b)-x^{\eta}(a)\|\leq& \left\|\int^b_{a}  {\dot x}^{\eta}(s) ds \right\| \leq \int^b_{a}  \|{\dot x}^{\eta}(s)\| ds\\
\leq & \int^b_{a} m(s)ds+(b-a) (\eta+\overline{\Phi}M +\overline{\Phi}\alpha \eta) \\
\leq &\frac{\epsilon}{4}+\delta \eta+\delta \overline{\Phi}M+\delta \overline{\Phi}\alpha \eta\leq \epsilon.\vspace{-1mm}
\end{split}
\]
This means that the family of function $\{x^{\eta}\}, \eta\in (0,\bar \eta]$ is equicontinuous and uniformly bounded for all $t\in \mathcal{I}$,\vspace{-1mm}
\[
\|x^\eta(t)\|-\|x_0\|\leq \int^t_{t_0}\|\dot x^{\eta}(\tau)\| d\tau\leq M +\alpha  \eta+\overline{\Phi}(M\alpha+\alpha^2 \eta), \vspace{-1mm}
\] so, by Arzel\`a--Ascoli theorem, it contains a sequence $x^{\eta_k}$ uniformly  convergent to a continuous function $x$ as $\eta_k\to \zero$.
The above inequalities imply that $x$ is absolutely continuous.

{3)} The set-valued mappings $F^{\eta}$ and $\tilde F^{\eta}$ are nonempty-valued, compact-valued, convex-valued and upper semi-continuous continuous in $x$ for almost all $t$ (see, e.g., \cite[page 86]{Filippov1988:Book}).
{
 For any solution $x^{\eta}$ of the IDE \eqref{eq:IDE_eta}, the set-valued mappings $t\to F^{\eta}(t,x^{\eta}(t))$ and $\tau\to \Phi(t,\tau)\tilde F^{\eta}(t,x^{\eta}(t))$ are compact-valued, convex-valued, measurable and bounded by the integrable function $t\mapsto 
\bar\Phi(m(t)+\eta)$. 
}	
By {\cite[Theorems 1, 2 and 4]{Aumann1965:JMAA}}\footnote{The requirement of the paper \cite{Aumann1965:JMAA} about Borel graph of the integrated function  has  been relaxed 
	(see, e.g., \cite[Chapter 3]{CastaingValadier1977:Book})
	using Kuratowski--Ryll-Nardzewski measurable selection theorem published in the same year \cite{KuratowskiRyll-Nardzewski1965:BAPC}.},
the right-hand side of \eqref{eq:IDI_eta} \vspace{-1mm}
\[
 Q^{\eta}(t):=F^{\eta}(t,x(t))+\int^t_{t^0} \Phi(t,\tau) \tilde F^{\eta}(\tau,x(\tau)) d\tau \subset \R^n\vspace{-1mm}
\]
is convex-valued and compact-valued {for almost all $t$.
Moreover, by construction, for almost all $t$ and for all $x$, we have  $K[f](t,x)\subset F^{\eta}(t,x)$,  $K[\tilde f](t,x)\subset \tilde F^{\eta}(t,x)$, $\forall \eta\in [0,\overline{\eta}]$, and  $F^{\eta}(t,x)\to K[f](t,x)$, $\tilde F^{\eta}(t,x)\to K[\tilde f](t,x)$  as $\eta \to 0$, where the limits are understood in the Hausdorff topology (see, e.g., \cite[\S 5]{Filippov1988:Book}).
By \cite[Theprem 5]{Aumann1965:JMAA}, for almost all $t\in \mathcal{I}$, we conclude that $Q^{\eta_{k}}(t)\to Q(t)$ as $\eta_k\to 0$,} 
where    \vspace{-1mm}
\[
Q(t)=K[f](t,x(t))+\int^t_{t^0} \Phi(t,\tau) K[\tilde f](\tau,x(\tau)) d\tau \subset \R^n\vspace{-1mm}
\]
is  a convex and compact set.
Hence, for any 
 $v\in \R^n$ and for almost all $t\in\mathcal{I}$ we have\vspace{-1mm}
{ $
v^{\top} \dot x^{\eta_k}(t) \leq \sup_{\ell \in Q^{\eta_{k}}(t)}v^{\top}\ell
 $ and }\vspace{-1mm}
\begin{equation}\label{eq:temp_proof}
 \phi(t):=\limsup_{\eta_k\to 0} v^{\top} \dot x_{\eta_k}(t)\leq \sup_{\ell \in Q(t)}v^{\top}\ell,\vspace{-2mm}
\end{equation}
Moreover, for any $[a,b]\subset \mathcal{I}$, it holds\vspace{-1mm}
\[
v^{\top}(x^{\eta_k}(b)-x^{\eta_k}(a))\!=\!\int^b_a v^{\top} \dot x^{\eta_k}(s) ds\!\leq\! \int^b_a \sup\limits_{j\geq k}v^{\top} x^{\eta_j}(s) ds.\vspace{-1mm}
\]
Since {$v^{\top}(x^{\eta_k}(b)-x^{\eta_k}(a))=\int^b_a v^{\top} \dot x^{\eta_k}(s) ds \to v^{\top}(x(b)-x(a))=\int^b_a v^{\top} \dot x(s) ds$ as $\eta^k\to 0$  and }$\sup_{j\geq k}v^{\top} x^{\eta_j}(s)$ does not increase as $k$ grows, then {we derive the inequality } \vspace{-1mm}
\[
\int^b_a v^{\top} \dot x(s) ds=v^{\top}(x(b)-x(a))\leq \int^b_a \phi(s) ds \vspace{-1mm}
\]
{by tending $\eta_k\!\to\! 0$.}
The segment $[a,b]$ is arbitrary selected, so\vspace{-1mm}
\[
v^{\top} \dot x(t)\le\sup_{\ell \in Q(t)}v^{\top}\ell, \quad \forall v\in \R^n,\vspace{-1mm}
\]
almost everywhere on $[t^0,t^0+\alpha]$. This  means $\dot x(t)\in Q(t)$ for almost all $t\in \mathcal{I}$. The proof is complete.
\end{proof}

Under some assumptions,  the integro-differential equation \eqref{eq:iODE} can be interpreted as a functional differential equation of retarded type  \cite{Hale1977:Book}, \cite{Fridman2014:Book}.
 Indeed, if, there exist $d>0$ such that $\Phi(t,\tau)=0$ for $t-\tau>d$, then
	$\dot x(t)$ depends only on $x(\tau)$ with $\tau\in [\max\{t^0, t-d\},t]$, $t\geq t^0$.
	Generalized solutions of  discontinuous retarded functional differential equations are introduced in  \cite[page 58]{KolmanovskiiMyshkis1992:Book} by means of a limiting procedure in a function space, which does not have such a simple formalization by the Filippov regularization \eqref{eq:Filippov_reg}. A study of links between the generalized solutions and the Filippov solutions is an interesting problem for the future research.

\begin{theorem}[On continuous dependence on parameters]\itshape\label{thm:continuity_iODE}
Let Assumption \ref{as:ex_IDE} be fulfilled 
	 and a family of functions $f_\mu, \tilde f_\mu$ be  parameterized by 
	$\mu\in \mathcal{M}$, where $\mathcal{M}$ is an open set of a complete metric space. Let $f=f_{\mu_0}$, $\tilde f=\tilde f_{\mu_0}$ for some $\mu_0\in \mathcal{M}$.
	Let  for any $\alpha>0$, any $r>0$ and any $\mu\in\mathcal{M}$, there exist $m_{\mu} \in L^{\infty}((-\alpha,\alpha);\R_+)$   such that 
 the set \vspace{-1mm}
	\begin{equation}\label{eq:set_E}
	\! \,\,E(t,x)\!=\!\left\{\!z\!\in\! \R^n: \!\begin{smallmatrix}
		\|x-z\|\leq m_{\mu}(t), \,
		\|f_\mu(t,x)-f(t,z)\|\leq m_{\mu}(t)\\
		 \text{ and }\|\tilde f_\mu(t,x)-\tilde f(t,z)\|\leq m_{\mu}(t)
	\end{smallmatrix}\!\right\}\!\!\!\vspace{-1mm}
\end{equation}
	is of positive measure for almost all $t\in(-\alpha,\alpha)$ and almost all $x\!\in\! \!B(r)$. 
	Let 
	$\sup_{\mu\in \mathcal{M}}\|m_{\mu}\|_{L^{\infty}}<+\infty$ and 
	$\|m_{\mu}\|_{L^1}\to 0$ as $\mu \to \mu_0$.
	 If, for a given $t^0\in\R$ and a given $x^0\in \R^n$, 	all Filippov solutions of the IVP \eqref{eq:iODE}, \eqref{eq:ch2_initialcondition}  exist on  $[t^0,t^0+T]$ then, 
	for any  $\varepsilon>0$, there exists $\delta\in (0,\varepsilon)$ such that 
	any Filippov solution $x_{\mu}$ of the IVP\vspace{-1mm}
	\begin{equation}\label{eq:iODE_mu}
		\dot x_{\mu}(t)\!=\!f_{\mu}(t,x_{\mu}(t))\!+\!\!\int^t_{t^0_{\mu}} \!\!\Phi(t,\tau) \tilde f_{\mu}(\tau,x_{\mu}(\tau)) d\tau, \;\;x_{\mu}(t^0_{\mu})\!=\!x^0_{\mu}\vspace{-1mm}
	\end{equation}
	with $t>t^0_{\mu}\in[t^0,t^0+\delta]$, $x_{\mu}(t^0_{\mu})=x^0_{\mu}\in x_0+\mathcal{B}(\delta)$   and 
	$\|m_{\mu}\|_{L^1(t^0,t^0+T)}\!\leq\! \delta$ differs from 
	some solution $x$ of the  IVP \eqref{eq:iODE}, \eqref{eq:ch2_initialcondition} for less than $\varepsilon$, i.e., $\sup\limits_{t\in [t^0+\delta,t^0+T]}\|x(t)-x_{\mu}(t)\|\leq \epsilon$.
\end{theorem}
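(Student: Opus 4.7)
The plan is to adapt the compactness argument of Theorem~\ref{thm:sol_iODE} to the perturbed setting and to conclude by contradiction in the spirit of Filippov's continuous-dependence theorem (Theorem~\ref{thm:con_sol_Filippov_Rn}). Suppose the conclusion fails: then there exist $\varepsilon_0>0$ and sequences $\mu_k\to\mu_0$, $t^0_{\mu_k}\to t^0$, $x^0_{\mu_k}\to x^0$ with $\|m_{\mu_k}\|_{L^1(t^0,t^0+T)}\to 0$, and Filippov solutions $x_{\mu_k}$ of \eqref{eq:iODE_mu}, for which no Filippov solution $x$ of the nominal IVP \eqref{eq:iODE}, \eqref{eq:ch2_initialcondition} approximates $x_{\mu_k}$ within $\varepsilon_0$ uniformly on $[t^0+\delta_k,t^0+T]$ with $\delta_k\to 0$.

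First I would extend each $x_{\mu_k}$ to all of $[t^0,t^0+T]$ (e.g.\ by setting $x_{\mu_k}(t)=x^0_{\mu_k}$ on $[t^0,t^0_{\mu_k}]$) preserving absolute continuity. Using Assumption~\ref{as:ex_IDE}, the local essential boundedness of $\Phi$, the uniform bound $\sup_\mu\|m_\mu\|_{L^\infty}<+\infty$, and arguments analogous to step~2 in the proof of Theorem~\ref{thm:sol_iODE}, I would obtain a uniform-in-$k$ bound on $\sup_{t\in[t^0,t^0+T]}\|x_{\mu_k}(t)\|$ and uniform equicontinuity via integrability of the dominating function from Filippov Condition. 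Arzel\`a--Ascoli then produces a subsequence (still indexed by $k$) converging uniformly to an absolutely continuous $x^{\ast}$ with $x^{\ast}(t^0)=x^0$.

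The main obstacle is to verify that $x^{\ast}$ is a Filippov solution of the nominal IVP. Here the condition that $E(t,x)$ in \eqref{eq:set_E} has positive measure plays the decisive role: for a.e.\ $(t,x)$ there exists $z\in x+\mathcal{B}(m_{\mu_k}(t))$ with $\|f_{\mu_k}(t,x)-f(t,z)\|\le m_{\mu_k}(t)$, and an analogous bound for $\tilde f_{\mu_k}$. Since $\|m_{\mu_k}\|_{L^1}\to 0$, by passing to a further subsequence we may assume $m_{\mu_k}(t)\to 0$ pointwise a.e. For every such $t$ and every $\delta>0$, eventually $m_{\mu_k}(t)+\|x_{\mu_k}(t)-x^{\ast}(t)\|<\delta$, so $f_{\mu_k}(t,x_{\mu_k}(t))$ lies within $m_{\mu_k}(t)$ of the set of values taken by $f(t,\cdot)$ on $x^{\ast}(t)+\mathcal{B}(2\delta)$ off any prescribed null set. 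By the definition \eqref{eq:Filippov_reg} of $K[f]$ together with its upper semicontinuity, this forces $\mathrm{dist}(f_{\mu_k}(t,x_{\mu_k}(t)),K[f](t,x^{\ast}(t)))\to 0$, and symmetrically for $\tilde f_{\mu_k}$.

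With this convergence in hand, I would reproduce step~3 of the proof of Theorem~\ref{thm:sol_iODE}: test \eqref{eq:iODE_mu} against an arbitrary $v\in\R^n$, take $\limsup_{k\to\infty}$, and use convexity, compactness and upper semicontinuity of the Filippov maps, combined with dominated convergence under the uniform bound $\bar\Phi (m(t)+\|m_{\mu_k}\|_{L^\infty})$, to derive
\[
v^{\top}\dot x^{\ast}(t)\le\sup_{\ell\in Q^{\ast}(t)} v^{\top}\ell\quad\text{for a.e.\ }t\in[t^0,t^0+T],
\]
where $Q^{\ast}(t)=K[f](t,x^{\ast}(t))+\int_{t^0}^{t}\Phi(t,\tau)K[\tilde f](\tau,x^{\ast}(\tau))\,d\tau$. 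Since $Q^{\ast}(t)$ is convex and compact and $v$ is arbitrary, this yields $\dot x^{\ast}(t)\in Q^{\ast}(t)$ a.e., so $x^{\ast}$ is a Filippov solution of the nominal IVP. But then, for large $k$, $x_{\mu_k}$ is within $\varepsilon_0$ of $x^{\ast}$ on $[t^0+\delta_k,t^0+T]$, contradicting the standing hypothesis and completing the argument. The most delicate technical step is the set-valued convergence in the third paragraph: $f_{\mu_k}$ approximates $f$ only indirectly through the positive-measure mechanism of $E(t,x)$, so the passage to $K[f]$ must be made via neighborhoods rather than pointwise limits.
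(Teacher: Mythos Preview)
Your approach is essentially the same as the paper's: establish equicontinuity and uniform boundedness of the perturbed solutions from Assumption~\ref{as:ex_IDE} and the uniform bound on $\|m_\mu\|_{L^\infty}$, extract a uniform limit via Arzel\`a--Ascoli, and show the limit is a Filippov solution of the nominal IVP by the ``test against $v$'' argument of step~3 in Theorem~\ref{thm:sol_iODE}. The paper organizes the delicate step you flag by introducing the enlarged maps
\[
F^{*}_{\mu}(t,x)=\overline{\co}\bigcup_{y,z\in\mathcal{B}(m_\mu(t))}K[f](t,x+y)+z,\qquad \tilde F^{*}_{\mu}(t,x)=\overline{\co}\bigcup_{y,z\in\mathcal{B}(m_\mu(t))}K[\tilde f](t,x+y)+z,
\]
and invoking \cite[page~98]{Filippov1988:Book} to deduce $K[f_\mu]\subset F^{*}_{\mu}$ from the positive-measure condition on $E(t,x)$; this cleanly handles the fact that the Filippov solution $x_\mu$ obeys the inclusion with $K[f_\mu]$ rather than the pointwise value $f_\mu(t,x_\mu(t))$, which your paragraph~3 treats a bit informally. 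The paper then concludes via \cite[Lemma~6, page~10]{Filippov1988:Book} where you argue by contradiction, but these are equivalent.
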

\begin{proof}
	Let $\bar r>0$ be such that any Filippov solution of \eqref{eq:iODE}, \eqref{eq:ch2_initialcondition}
	belongs to the ball $x^0+\mathcal{B}(\bar r)$ for all $t\in[t^0,t^0+T]$.
	Let us study the case $t^0_{\mu}=t^0$ and define
\[
F^{*}_{\mu}(t,x)=\overline{\co} \bigcup_{y\in \mathcal{B}(m_{\mu}(t)),z\in\mathcal{B}(m_{\mu}(t))}K[f](t,x+y)+z
\]
\[
\tilde F^{*}_{\mu}(t,x)=\overline{\co} \bigcup_{y\in \mathcal{B}(m_{\mu}(t)),z\in\mathcal{B}(m_{\mu}(t))}K[\tilde f](t,x+y)+z.\vspace{-1mm}
\]
For almost all $t\in (t^0,t^0+T)$ and almost all $x\in x^0+\mathcal{B}(\bar r)$, by construction $K[f](t,x)\subset F^{*}_{\mu}(t,x)$, $K[\tilde f](t,x)\subset \tilde F^{*}_{\mu}(t,x)$, and
$K[f_{\mu}](t,x)\subset F^{*}_{\mu}(t,x)$, $K[\tilde f](t,x)\subset \tilde F^{*}_{\mu}(t,x)$ due to positive measure of $E(t,x)$ (see, \cite[page 98]{Filippov1988:Book} for more details).
Since $r_{\mathcal{M}}=\sup_{\mu \in \mathcal{M}}\|m_{\mu}\|_{L^{\infty}(t^0,t^0+T)}<+\infty$ then, due to Filippov Condition, 
there exists $m\in L^{1}((t^0,t^0+T),\R_+)$ such that $\sup_{y\in F^*_{\mu}(t,x)}\|y\|\leq m(t)+m_{\mu}(t)$ for almost all $t\in (t^0,t^0+T)$, for all $x\in x^0+\mathcal{B}(\bar r+r_{\mathcal{M}})$ and for all $\mu\in \mathcal{M}$.
Hence, any Filippov solution of \eqref{eq:iODE_mu}  satisfies \vspace{-1mm}
\begin{equation}
	\dot x_{\mu}(t)\in F^*_{\mu}(t,x_{\mu}(t))+\int^t_{t^0} \Phi(t,\tau)\tilde F^{*}_{\mu}(\tau,x_{\mu}(\tau))d\tau.\vspace{-1mm}
\end{equation}
and  $\|\dot x_{\mu}(t)\|\in m(t)+m_{\mu}(t)+\overline{\Phi}\int^t_{t^0} m(\tau)+m_{\mu}(\tau) \, d\tau$ almost everywhere,
where $\overline{\Phi}\geq 0$ is defined in the proof of Theorem \ref{thm:sol_iODE}. 
Hence,  for any $[a,b]\subset (t^0,t^0+T)$, we have\vspace{-1mm}
\[\vspace{-1mm}
\begin{split}
	\|x_{\mu}(b)\!-\!x_{\mu}(a)\|
	\!\leq\! & \left\|\int^b_{a}  {\dot x}^{\eta}(s) ds \right\| \leq \int^b_{a}  \|{\dot x}^{\eta}(s)\| ds\\
	\leq&\int^b_{a} \!\! m(s)ds\!+\!(b-a) (r_{\mathcal{M}}\!+\!\overline{\Phi}M\!+\!T\overline{\Phi} r_{\mathcal{M}}), 
\end{split}
\]
where $M=\int^{t^0+T}_{t^0} m(\tau) d\tau<+\infty$. This implies the equicontinuity of the family $\{x_{\mu}\}_{\mu\in\mathcal{M}}$  
of absolutely continuous functions (see, the step 2 of the proof of Theorem \ref{thm:sol_iODE} for more details).  Similarly, for all $t\in (t^0,t^0+T)$, we derive \vspace{-1mm}
\[\vspace{-1mm}
\begin{split}
\|x_{\mu}(t)\|\leq& \|x_0\|+\delta+\int^{t}_{t^0} \|\dot x_{\mu}(\tau)\| d\tau\\
\leq&  \|x_0\|+\delta+M +T (r_{\mathcal{M}}+\overline{\Phi}M\!+\!T\overline{\Phi} r_{\mathcal{M}}),
\end{split}
\]
i.e., the family $\{x_{\mu}\}_{\mu\in\mathcal{M}}$ is uniformly bounded.
 Since, by assumption, 
 $\|m_{\mu}\|_{L^1}\to 0$ as $\mu\to \mu_0$, then,
 
 for almost all $t$ and for all $x$, we have   $F^*_{\mu}(t,x)\to K[f](t,x)$, $\tilde F^*_{\mu}(t,x)\to K[\tilde f](t,x)$  as $\mu \to \mu_0$, where the limits are understood in the Hausdorff topology.
 By \cite[Theprem 5]{Aumann1965:JMAA}, for almost all $t\in \mathcal{I}$, we conclude that $Q^*_{\mu}\to Q(t)$ as $\mu\to \mu_{0}$,
where    \vspace{-1mm}
\[\vspace{-1mm}
\begin{split}
Q^*_{\mu}(t)=&F^*_{\mu}(t,x_{\mu}(t))+\int^t_{t^0} \Phi(t,\tau)\tilde F^{*}_{\mu}(\tau,x_{\mu}(\tau))d\tau\subset \R^n,\\
Q(t)=&K[f](t,x(t))+\int^t_{t^0} \Phi(t,\tau) K[\tilde f](\tau,x(\tau)) d\tau \subset \R^n
\end{split}
\]
are  convex and compact sets. Therefore,  for any uniformly convergent sequence of solutions $x_{\mu_i}\to x_{\mu_0}$ as $\mu_i\to \mu_0$, the limit $x_{\mu_0}$ is a Filippov solution of  the IVP \eqref{eq:iODE}, \eqref{eq:ch2_initialcondition}
(see, the step 3 of the proof of Theorem \ref{thm:sol_iODE} for more details).
Applying 
\cite[Lemma 6, page 10]{Filippov1988:Book} 
we complete the proof. 
\end{proof}

Continuous dependence of solutions  of IDEs on initial data (as in Theorem \ref{thm:con_sol_strong_Rn}) follows from the above result.

\vspace{-1mm}

\subsection{Uniqueness of Filippov  solutions of IDEs}

	For continuous $\tilde f$, the uniqueness analysis of Filippov solutions repeats the  analysis of Charath\'eodory  solutions.
	\begin{theorem}\itshape\label{thm:uniq_one_side_IDE}
		Under Assumption \ref{as:ex_IDE},
		if $f$ is locally one-sided Lipschitz  and $\tilde f$ is locally Lipschitz  then the IVP \eqref{eq:iODE}, \eqref{eq:ch2_initialcondition} has\\ a  unique solution defined, at least, locally in the forward time. 
	\end{theorem}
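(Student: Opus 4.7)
The plan is to derive a Volterra-type differential inequality for $r(t) := \|x_1(t) - x_2(t)\|$, with $x_1, x_2$ two putative Filippov solutions, and close it with Gronwall's lemma. Existence of at least one Filippov solution on an interval $\mathcal{I} = [t^0, t^0+\alpha]$ is guaranteed by Theorem~\ref{thm:sol_iODE}. The key simplification for the IDE case is that $\tilde f$, being locally Lipschitz (hence continuous in $x$), is left unchanged by Filippov regularization: $K[\tilde f](\tau, y) = \{\tilde f(\tau, y)\}$ for almost all $\tau$. Applying Lemma~\ref{lem:Filippov_selector} to $t \mapsto K[f](t, x_i(t))$ then produces measurable selections $g_i(t) \in K[f](t, x_i(t))$ with
\[
\dot x_i(t) = g_i(t) + \int_{t^0}^{t} \Phi(t, \tau)\, \tilde f(\tau, x_i(\tau))\, d\tau \quad \text{a.e. on } \mathcal{I}, \quad i = 1, 2.
\]

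The next step, and the most technical one, is to transfer the one-sided Lipschitz property of $f$ to its Filippov regularization, arguing as in \cite[\S10]{Filippov1988:Book}: for almost all $t$ and every $u \in K[f](t, x_1), v \in K[f](t, x_2)$, one has $(x_1 - x_2)^{\top}(u - v) \leq \ell(t)\, \|x_1 - x_2\|^{2}$. This transfer works because elements of $K[f]$ are convex closures of limits of $f(t,\cdot)$ on shrinking neighborhoods (excluding null sets), and the inequality is preserved by both operations thanks to the local integrable majorant from the Filippov Condition. Combining this with the two-sided estimate $\|\tilde f(\tau, x_1(\tau)) - \tilde f(\tau, x_2(\tau))\| \leq \tilde\ell(\tau)\, r(\tau)$ and the local essential bound $\overline{\Phi}$ of $\Phi$, I would obtain
\[
\tfrac{1}{2}\tfrac{d}{dt} r^{2}(t) = (x_1 - x_2)^{\top}(\dot x_1 - \dot x_2) \leq \ell(t)\, r^{2}(t) + \overline{\Phi}\, r(t) \int_{t^0}^{t} \tilde\ell(\tau)\, r(\tau)\, d\tau
\]
almost everywhere on $\mathcal{I}$.

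To conclude, set $W(t) := \sup_{s \in [t^0, t]} r^{2}(s)$. Replacing $\ell$ by $\ell^{+} := \max(\ell, 0)$, integrating on $[t^0, t]$, and using $r(s) \leq \sqrt{W(s)}$ together with monotonicity of $W$ to dominate the double integral, one arrives at
\[
W(t) \leq \int_{t^0}^{t} \Big( 2\ell^{+}(s) + 2\overline{\Phi} \int_{t^0}^{s} \tilde\ell(\tau)\, d\tau \Big) W(s)\, ds.
\]
The kernel is non-negative and locally integrable, and $W(t^0) = 0$, so the classical Gronwall lemma forces $W \equiv 0$, hence $x_1 \equiv x_2$ on $\mathcal{I}$. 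The main obstacle I expect is the extension of the one-sided Lipschitz inequality from $f$ to the set-valued map $K[f]$; once that is in place, the coupling with the integral term is handled by the supremum trick and Gronwall. I note that requiring $\tilde f$ to be two-sidedly locally Lipschitz (rather than only one-sidedly) appears essential here, because $\tilde f$ enters the estimate only through the norm $\|\tilde f(\tau, x_1) - \tilde f(\tau, x_2)\|$, for which a one-sided bound does not suffice.
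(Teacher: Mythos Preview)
Your proposal is correct and follows essentially the same route as the paper: both arguments differentiate $\|x_1-x_2\|^2$, invoke the extension of the one-sided Lipschitz inequality from $f$ to $K[f]$ (as in \cite[\S10]{Filippov1988:Book}) together with the two-sided Lipschitz bound on $\tilde f$, and arrive at the same Volterra-type differential inequality. The only cosmetic difference is in the closing step---the paper converts the cross term via Cauchy--Schwarz and then cites Pachpatte's integral inequality \cite{Pachpatte1975:JMAA}, whereas you use the supremum trick with $W(t)=\sup_{s\le t}r^2(s)$ and the classical Gronwall lemma; both are standard and yield the same conclusion.
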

\begin{proof}
	If $x_1,x_2$ are two solutions of the IVP \eqref{eq:iODE}, \eqref{eq:ch2_initialcondition} then, for $e(t)=x_1(t)-x_2(t)$  and for almost all $t$, it holds  \vspace{-1mm}
	\begin{equation}\label{eq:proof_unique}\small
\begin{split}
		\!\!\!\!\!\tfrac{d \|e(t)\|^2}{dt}\!=&2e^{\!\top}\!(t)\! \left(\! f_1(t)\!-\! f_2(t)\!+\!\!\!\int^t_{t^0}\!\!\Phi(t,\tau) \left(\tilde f_1(\tau)\!-\! \tilde f_2(\tau)\right) d\tau\!\right)\\[-2mm]
	\leq& 2\ell(t)\|e(t)\|^2+2\bar \Phi \|e(t)\|\int^t_{t^0} \tilde \ell(\tau)\|e(\tau)\| d\tau\\[-2mm]
	\leq& 2\ell(t)\|e(t)\|^2\!+\!(\bar \Phi \|e(t)\|)^2\!+\!\left(\int^t_{t^0} \!\!\tilde \ell(\tau)\|e(\tau)\| d\tau\right)^2\\[-2mm]
	\leq &2\ell(t)\|e(t)\|^2+\bar \Phi^2\|e(t)\|^2+\|\tilde \ell\|_{L^2}^2\!\!\int^t_{t^0}\!\! \|e(\tau)\|^2 d\tau, 
	\end{split}\!\!\!\!\vspace{-1mm}
	\end{equation}
	where $f_1(t)\in K[f](t,x_1(t)), f_2(t)\in K[f](t,x_2(t))$, $\tilde f_1(\tau)=\tilde f(\tau,x_1(\tau))$, $\tilde f_2(\tau)=\tilde f(\tau,x_2(\tau))$, $\overline{\Phi}\geq 0$ is defined in the proof of Theorem \ref{thm:sol_iODE}. \textit{One-Sided  Lipschitz Condition} for $f$ (and, consequently, for $K[f]$, see \cite[page 106]{Filippov1988:Book}) with $\ell\in L^{\infty}_{\rm loc}$, \textit{Lipschitz Condition}  for $\tilde f$ with $\tilde \ell\in L^{\infty}_{\rm loc}$ and Cauchy--Schwartz inequality in $L^2((0,t);\R)$ have been utilized to derive \eqref{eq:proof_unique}. Since, $e(t^0)=0$ then the obtained  inequality implies $e(t)\!=\!0$ for $t\!>\!t^0$ (see \cite{Pachpatte1975:JMAA}).\!\!\!\!
\end{proof}

Let $F_{\mathcal{S}}(t,x,\gamma)\!=\! \left(K[f](t,x)+\gamma\right)\,\cap\,\mathcal{C}_{\mathcal{S}}(x)$, where $\gamma\in \R^n$, $\mathcal{S}\!\subset\!\R^n$ is discontinuity set of $f$  and $\mathcal{C}_{\mathcal{S}}$ is tangent cone for $\mathcal{S}$.
\begin{theorem}[\small Uniqueness of solution in sliding mode case]\label{thm:unique_SM_iODE}\hfill \newline\itshape
	Let Assumption \ref{as:ex_IDE} be fulfilled. Let    $\tilde {\mathcal{S}}\subset \R^n$ (resp., $\mathcal{S}\subset \R^n$) be a discontinuity set of the piece-wise 	(resp., one-sided) Lipschitz continuous  function $\tilde f$ (resp., $f$). Let a measurable function $(t,x,\gamma)\in \R\times \R^n\times \R^n \mapsto f^0(t,x,\gamma)\in \R^n$ be
 locally one-sided Lipschitz in $x$ and locally Lipschitz in $\gamma$.
	If \vspace{-1mm}
	\begin{itemize} 
	\item[1)]	$f$  satisfies
 Assumption \ref{as:uniq_1} and 
$\mathcal{S}\cap \tilde {\mathcal{S}}=\emptyset$;
\item[2)]  $\forall x\in \mathcal{S}$, $\exists \hat \epsilon\!>\!0$ :  $\left(K[f](t,x)\!+\!\gamma\right)\,\cap\,\mathcal{C}_{\mathcal{S}}(x)\!=\!\{f^0(t,x,\gamma)\}$ is a singleton for almost all $t\in \R$ and 
all $\gamma\in \mathcal{B}(\hat \epsilon)$;
\item[3)]  $K[f](t,x)\cap \mathcal{C}_{\tilde{\mathcal{S}}}(x)\!=\!\emptyset$ for all $x\!\in\! \tilde{\mathcal{S}}$ and almost all $t\!\in\! \R$;\vspace{-1mm}
\end{itemize}
then	the IVP  \eqref{eq:iODE}, \eqref{eq:ch2_initialcondition} has a Filippov solution being unique,  at least, locally in the forward time. 	
\end{theorem}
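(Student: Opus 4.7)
The plan is to reduce the uniqueness problem for the discontinuous IDE to the sliding-mode uniqueness theorems established earlier for ODEs (Theorem \ref{thm:unique_SM} and Corollary \ref{cor:uniqness_mix}) by treating the integral memory term as an additive ``small disturbance''. For two Filippov solutions $x_1,x_2$ of the IVP with the same initial data $x^0$, set
\[
\gamma_i(t)=\int_{t^0}^t \Phi(t,\tau)\tilde f(\tau,x_i(\tau))\,d\tau,\quad i=1,2.
\]
Then each $x_i$ satisfies the perturbed ODI $\dot x_i(t)\in K[f](t,x_i(t))+\gamma_i(t)$ a.e. Since $\Phi$ is locally essentially bounded and $\tilde f$ is locally bounded along each solution (Filippov Condition), $\gamma_i$ is continuous with $\gamma_i(t^0)=\zero$. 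Consequently there exists $\alpha>0$ such that $\gamma_i(t)\in\mathcal{B}(\hat\epsilon)$ on $[t^0,t^0+\alpha]$, which is exactly the regime in which condition 2 is applicable.

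Existence follows from Theorem \ref{thm:sol_iODE}. For uniqueness I would split the argument by the location of $x^0$. In the generic case $x^0\notin \mathcal{S}\cup\tilde{\mathcal{S}}$, both $f$ and $\tilde f$ are locally (one-sided) Lipschitz in a neighbourhood of $x^0$ and Theorem \ref{thm:uniq_one_side_IDE} applies directly on a forward sub-interval. When $x^0\in\tilde{\mathcal{S}}\setminus\mathcal{S}$, $f$ is single-valued near $x^0$, so $\dot x_i(t^{0+})=f(t^0,x^0)$; by condition 3 this direction does not belong to $\mathcal{C}_{\tilde{\mathcal{S}}}(x^0)$, and the transversality determines the continuity domain $\tilde G^j$ of $\tilde f$ into which each solution is pushed. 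Once inside $\tilde G^j$, $\tilde f$ is locally Lipschitz and Theorem \ref{thm:uniq_one_side_IDE} again yields local uniqueness.

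The main case is $x^0\in\mathcal{S}$ (so $x^0\notin\tilde{\mathcal{S}}$ by condition 1, and $\tilde f$ is locally Lipschitz near $x^0$). Since Assumption \ref{as:uniq_1} holds for $f$ and $\gamma_i$ is continuous and small, the strict positive invariance argument from the proof of Theorem \ref{thm:unique_SM} is preserved under the small perturbation $\gamma_i$ on $[t^0,t^0+\alpha]$, so both solutions remain on $\mathcal{S}$. Condition 2 then gives the equation form
$\dot x_i(t)=f^0(t,x_i(t),\gamma_i(t))$ a.e. Setting $e(t)=x_1(t)-x_2(t)$ and splitting
\[
f^0(t,x_1,\gamma_1)-f^0(t,x_2,\gamma_2)=\bigl[f^0(t,x_1,\gamma_1)-f^0(t,x_2,\gamma_1)\bigr]+\bigl[f^0(t,x_2,\gamma_1)-f^0(t,x_2,\gamma_2)\bigr],
\]
the one-sided Lipschitzness of $f^0$ in $x$ and Lipschitzness of $f^0$ in $\gamma$ yield, along with Cauchy--Schwarz applied to $\|\gamma_1-\gamma_2\|\le\bar\Phi\int_{t^0}^t\tilde\ell(\tau)\|e(\tau)\|\,d\tau$, an estimate of the form
\[
\tfrac{d\|e(t)\|^2}{dt}\le C_1(t)\|e(t)\|^2+C_2\int_{t^0}^t \|e(\tau)\|^2\,d\tau
\]
with $C_1\in L^{\infty}_{\rm loc}$, exactly the structure already exploited in the proof of Theorem \ref{thm:uniq_one_side_IDE}. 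Since $e(t^0)=\zero$, Pachpatte's Gronwall-type inequality forces $e\equiv\zero$ locally.

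The main obstacle is the sliding case $x^0\in\mathcal{S}$: one must verify, through the perturbed analogue of Theorem \ref{thm:unique_SM}, that the forward invariance of $\mathcal{S}$ and the singleton structure of $F_{\mathcal{S}}(t,x,\gamma)$ survive the memory perturbation $\gamma_i(t)$. The theorem statement is cleverly designed to supply exactly what is needed, so the task reduces to choosing $\alpha$ small enough that $\|\gamma_i\|\le\hat\epsilon$ and then executing a Gronwall-type bookkeeping that couples the one-sided Lipschitz estimate for $f^0$ with the integrated bound on $\|\gamma_1-\gamma_2\|$.
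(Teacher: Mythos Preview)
Your proposal is correct and follows essentially the same route as the paper: treat the integral term as a small additive perturbation $\gamma_i(t)$ that vanishes at $t^0$, use Assumption~\ref{as:uniq_1} plus the smallness of $\gamma_i$ to keep solutions on $\mathcal{S}$ (the perturbed version of Theorem~\ref{thm:unique_SM}), invoke condition~2 to reduce the sliding dynamics to $\dot x_i=f^0(t,x_i,\gamma_i)$, and then close with a Gronwall--Pachpatte estimate combining one-sided Lipschitzness of $f^0$ in $x$, Lipschitzness in $\gamma$, and Lipschitzness of $\tilde f$ near $\mathcal{S}$ (since $\mathcal{S}\cap\tilde{\mathcal{S}}=\emptyset$). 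Your case split and the explicit decomposition of $f^0(t,x_1,\gamma_1)-f^0(t,x_2,\gamma_2)$ are slightly more detailed than the paper's presentation, but the argument is the same; just note that $\gamma_i$ need not be continuous in $t$ (only $\|\gamma_i(t)\|\to 0$ as $t\to t^{0+}$ is required and follows from the bound $\|\gamma_i(t)\|\le\bar\Phi\int_{t^0}^t m(\tau)\,d\tau$), and that writing $\gamma_i$ via $\tilde f$ rather than $K[\tilde f]$ is justified precisely because the solutions stay in a neighbourhood of $\mathcal{S}$ disjoint from $\tilde{\mathcal{S}}$.
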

\begin{proof}
	Let us study the case $x^0\in \mathcal{S}$. 	This means that $x^0\in \partial G^j$ for all $j\in \mathcal{N}(x^0)$, where, as before, $\mathcal{N}:\mathcal{S}\to 2^{\{1,2,...,N\}}$ is the indicator function, i.e., $j\in \mathcal{N}(x_0)$ if $x_0\in \partial G^{j}$.
	Since the function $f$ is piece-wise continuous then  there exists $\bar \epsilon>0$ 
	such that $\mathcal{N}(x^0+\delta)\subset \mathcal{N}(x^0)$ for $x_0+\delta\in S: \|\delta\|\leq \bar \epsilon$.

	By 
	Assumption \ref{as:uniq_1}, the vector $f^j(t,x^0)$ belongs to the set $\R^n\backslash \mathcal{C}_{\bar  G^{j}}(x^0)$ 
	for all $t\in \R$.
	Since $\mathcal{C}_{\bar G^{j}}(x^0)$ is a positive closed cone then its complement $\R^n\backslash \mathcal{C}_{\bar G^{j}}(x^0)$ is a positive open cone.
	Hence, taking into account,
	$f^j\in C(\R\times \R^n;\R^n)$  we conclude that there exists $\epsilon^j>0$ such that \vspace{-1mm}
	$$
    f^j(t^0+s,x^0+\mathcal{B}(\epsilon^j))+\mathcal{B}(\epsilon^j) \subset  \left(\R^n\backslash {\mathcal{C}}_{\bar G^{j}}(x^0)\right), 	\; \forall s\in (0,\epsilon^j),\vspace{-1mm}
	$$
 where $\mathcal{B}(\epsilon^j)\subset \R^n$ is an open  ball of the radius $\epsilon^j>0$.
	
	Let $\epsilon =\min\left\{\bar \epsilon,\hat \epsilon, \min_{j\in \mathcal{N}(x^0)}\{\epsilon^j\}\right\}$.
	Repeating the proof of Theorem \ref{thm:unique_SM},  we conclude  that any solution 
	$x^{\epsilon}$
	of the IVP\vspace{-1mm}
	\begin{equation}\label{eq:ODI_temp_unique_SM}
		\dot x^{\epsilon}(t)\in K[f](t,x^{\epsilon}(t))+\mathcal{B}(\epsilon), \quad x^{\epsilon}(t^0)=x^0\in \mathcal{S}\vspace{-1mm}
	\end{equation}
	belongs to $\mathcal{S}$	as long as $x^{\epsilon}(t)\in x^0+\mathcal{B}(\epsilon)$. 
	For any continuous function $t\in [t^0,t^0+\epsilon]\to x(t)\in \R^n$, the set-valued function \vspace{-1mm}
	$$
	t\in [t^0,t^0+\epsilon]\mapsto \tilde Q(t):=\int^t_{t^0} \Phi(t,\tau) K[\tilde f](\tau,x(\tau)) d\tau\subset \R^n\vspace{-1mm}
	$$ is bounded as follows (see, the proof of Theorem \ref{thm:sol_iODE})\vspace{-1mm}
	\[
	\sup_{y\in \tilde Q(t)} \|y\|\leq \overline{\Phi}\int^t_{t^0}m(\tau) +\eta\, d\tau\vspace{-1mm}
	\]
	for some $\eta>0, \overline{\Phi}>0$ and some integrable function $m:[t^0,t^0+\epsilon]\to \R$. This means that $\exists t'\in (0,\epsilon)$ such that \vspace{-1mm}
	\[
	\sup_{y\in \tilde Q(t)} \|y\|\leq\epsilon, \quad \forall t\in [t^0,t^0+t'].\vspace{-1mm}
	\]
	Any solution the system \eqref{eq:iODE} with $x(t^0)=x^0\in \mathcal{S}$ is a solution of \eqref{eq:ODI_temp_unique_SM} for  $t\in[t^0,t^0+t']$. Since $\mathcal{S}\cap \tilde {\mathcal{S}}=\emptyset$  and  $\tilde f$ satisfies Lipschitz condition on any compact from $\mathcal{S}$  then  the IDE\vspace{-1mm}
	\begin{equation*}
\dot x(t)\!=\!f^0\left(t,x(t),\gamma(t)\right)+\gamma(t), \quad \gamma(t)\!=\!\int^t_{t^0}\!\!\!\Phi(t,\tau) \tilde f(\tau,x(\tau)) d\tau\vspace{-1mm}
	\end{equation*}
   defines the motion of the system on the sliding surface. 
  {For the obtained system, repeating the proof of Theorem  \ref{thm:uniq_one_side_IDE}, we derive the estimate similar to \eqref{eq:proof_unique}, which insures a local in-time uniqueness of solutions.  	For $x^0\in  G^j\backslash \tilde{\mathcal{S}}$, the local-in-time  uniqueness of solution  follows from  Theorem \ref{thm:uniqness_iODE}.  Finally, since $K[f](t,x)$ and $\mathcal{C}_{\mathcal{S}}(x)$ are compact sets  for all $x$ and for almost all $t$, then the condition 3) implies that 
  	$\forall x\in \tilde {\mathcal{S}}$, $\exists \hat \epsilon\!>\!0$ such that  $\left(K[f](t,x)+\gamma \right)\cap \mathcal{C}_{\tilde{\mathcal{S}}}(x)=\emptyset$  for almost all $t\in \R$ and 
  	all $\gamma\in \mathcal{B}(\hat \epsilon)$.
  	So, any solution  initiated on $\tilde S$ immediately enters into  domain $\R^n\backslash\mathcal{S}$, i.e., the estimate \eqref{eq:proof_unique} holds almost everywhere and solution is unique, at least, locally in time.}
\end{proof}

Theorem \ref{thm:unique_SM_iODE} asks $\tilde f$ to be continuous on the discontinuity set of $f$. For IDEs with  sliding  modes, this condition cannot be violated to ensure the uniqueness of Filippov solutions.
\begin{example}\itshape\label{ex:5}
	A Filippov solution of the IVP\vspace{-1mm}
	\begin{equation}\label{eq:ex_5}\vspace{-1mm}
	\left\{
	\begin{split}
		\dot x_1(t)=&\;u(x(t)),  	\quad\quad\quad \quad\quad \quad \quad \quad \quad x_1(0)=0,\\[-1mm]
		\dot x_2(t)=&\int^t_{0} \sin(t+\tau)u(x(\tau)) d\tau, \quad\quad  x_2(0)=0,\\[-1mm]
		u(x)=&-\sign(x_1), \quad \quad \quad \;\;x=(x_1,x_2)^{\top}\in \R^2
	\end{split}
	\right.
	\end{equation}
 is  $x_1(t)=0$ and $x_2(t)=\int^t_0\int^s_0
  \sin(s+\tau)q(\tau) d\tau ds$, where $q:\R\mapsto [-1,1]$ is any measurable function. 
\end{example}
The IDE \eqref{eq:iODE} without sliding mode may have unique solutions if an  intersection of  discontinuity sets of $f$ and $\tilde f$ is non-empty.
\begin{theorem}[\small Uniqueness of solution in switching case]\label{thm:unique_no_SM_iODE}\hfill \newline\itshape
		Let Assumption \ref{as:ex_IDE} be fulfilled. Let    $\tilde {\mathcal{S}}\subset \R^n$ (resp., $\mathcal{S}\subset \R^n$) be a discontinuity set of the piece-wise  (resp., one-sided) Lipschitz continuous  function $\tilde f$ (resp., $f$). If 
	 $\tilde {\mathcal{S}}\subset \mathcal{S}$ and
	 $f$  satisfies Assumption \ref{as:uniq_2} then
	the IVP  \eqref{eq:iODE}, \eqref{eq:ch2_initialcondition} has a Filippov solution being unique,  at least, locally in the forward time. 	
\end{theorem}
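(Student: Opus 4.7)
The plan is to reduce this theorem to Theorem \ref{thm:uniq_one_side_IDE} by the same strategy as the proof of Theorem \ref{thm:unique_SM_iODE}: absorb the convolution
$$\gamma(t):=\int_{t^0}^t \Phi(t,\tau)\,\tilde f(\tau,x(\tau))\,d\tau$$
into a small time-dependent perturbation of the Filippov differential inclusion associated with $f$, and use the geometric condition of Assumption \ref{as:uniq_2} (playing here the role that the sliding-mode identity $F_{\mathcal{S}}=\{f^0\}$ played in Theorem \ref{thm:unique_SM_iODE}). Existence of a Filippov solution on some $[t^0,t^0+\alpha)$ is immediate from Theorem \ref{thm:sol_iODE} under Assumption \ref{as:ex_IDE}, so all the work is in proving uniqueness.

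If $x^0\notin \mathcal{S}$, then $x^0\in G^j$ for some $j$; since $\tilde{\mathcal{S}}\subset \mathcal{S}$, also $x^0\notin \tilde{\mathcal{S}}$, and in a neighborhood of $x^0$ the functions $f$ and $\tilde f$ coincide with their locally one-sided Lipschitz, respectively locally Lipschitz, representatives $f^j$ and $\tilde f^j$. By openness of $G^j$ and continuity of Filippov solutions, any solution stays in $G^j$ on some $[t^0,t^0+\alpha')$, and Theorem \ref{thm:uniq_one_side_IDE} yields local forward uniqueness there.

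For the main case $x^0\in \mathcal{S}$, let $x$ be any Filippov solution. Filippov Condition on $\tilde f$ and local essential boundedness of $\Phi$ give $\|\gamma(t)\|\le \overline{\Phi}\int_{t^0}^t m(\tau)\,d\tau\to 0$ as $t\to t^0$, so for any preselected $\varepsilon>0$ there exists $t'\in(t^0,t^0+\alpha)$ with $\gamma(t)\in\mathcal{B}(\varepsilon)$ for $t\in[t^0,t']$. The IDE then reads $\dot x(t)\in K[f](t,x(t))+\gamma(t)$ for almost all $t$. Assumption \ref{as:uniq_2} states $K[f](t,x)\cap \mathcal{C}_{\mathcal{S}}(x)=\emptyset$; by upper semi-continuity and compact-valuedness of $K[f](t,\cdot)$, together with the finite-partition structure of $\mathcal{S}$, one selects $\varepsilon>0$ small enough that $(K[f](t,x)+\mathcal{B}(\varepsilon))\cap \mathcal{C}_{\mathcal{S}}(x)=\emptyset$ uniformly for $x$ in a neighborhood of $x^0$ in $\mathcal{S}$ and almost all $t\in[t^0,t']$. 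The argument of Theorem \ref{thm:unique_no_SM} then forces $x$ to leave $\mathcal{S}$ at $t=t^0$ and enter a unique $G^j$, after which uniqueness on $(t^0,t']$ follows from the first case.

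I expect the main obstacle to be the uniform $\varepsilon$-separation $(K[f](t,x)+\mathcal{B}(\varepsilon))\cap \mathcal{C}_{\mathcal{S}}(x)=\emptyset$ in a neighborhood of $x^0$: the tangent-cone multifunction $\mathcal{C}_{\mathcal{S}}(\cdot)$ is in general only lower semicontinuous, so a naive compactness argument does not close. I would handle this by reducing, via the Filippov formula $K[f](t,x)=\overline{\co}\{f^j(t,x):j\in \mathcal{N}(x)\}$ from \eqref{eq:K[f]}, to verifying that each extreme vector $f^j(t,x)$ stays an $\varepsilon$-distance away from each of the finitely many cones $\mathcal{C}_{\bar G^k}(x)$ that piece together $\mathcal{C}_{\mathcal{S}}(x)$; a finite case analysis using continuity of each $f^j$ and of each $\partial G^k$ then recovers the required uniform $\varepsilon$. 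A secondary subtlety, namely that the trajectory enters a uniquely determined $G^j$, is precisely the content of Theorem \ref{thm:unique_no_SM} at the ODE level and can be invoked directly.
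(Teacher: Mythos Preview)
Your proposal is correct and follows essentially the same route as the paper: treat the integral term as a small perturbation $\gamma(t)\in\mathcal{B}(\varepsilon)$ for $t$ close to $t^0$, use Assumption~\ref{as:uniq_2} to get $(K[f](t,x)+\mathcal{B}(\varepsilon))\cap\mathcal{C}_{\mathcal{S}}(x)=\emptyset$, conclude that any Filippov solution leaves $\mathcal{S}$ immediately into some $G^j$, and then invoke the continuous-IDE uniqueness result. The paper's proof is terser---it asserts the $\hat\varepsilon$-separation pointwise from compactness of $K[f](t,x)$ and closedness of $\mathcal{C}_{\mathcal{S}}(x)$ and then jumps directly to ``immediately enters $G^j$''---so your explicit concern about uniformity of $\varepsilon$ in a neighborhood of $x^0$, and your proposed reduction via \eqref{eq:K[f]} to the finitely many extreme vectors $f^j$, is in fact more careful than what the paper writes down.
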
	
\begin{proof}
	For $x_0\notin \mathcal{S}$,  Filippov solution of  the IVP \eqref{eq:iODE}, \eqref{eq:ch2_initialcondition} coincides Carath\'eodory solution as long as $x(t)\notin \mathcal{S}$. This solution is unique due to Theorems \ref{thm:Caratheodory_iODE} and  \ref{thm:uniqness_iODE}.
	Since $K[f](t,x)$ and $\mathcal{C}_{\mathcal{S}}(x)$ are compact sets  for all $x$ and for almost all $t$, then Assumption \ref{as:uniq_2} implies that for any $x\in \mathcal{S}$ there exists $\hat \epsilon>0$ such that $K[f](t,x)+\gamma\cap \mathcal{C}_{\mathcal{S}}(x)=\emptyset$ for all $\gamma\in \R^n: \|\gamma\|\leq \hat \epsilon$. A Filippov solution $x$ of the IVP \eqref{eq:iODE}, \eqref{eq:ch2_initialcondition} initiated 
	at $x^0\in \mathcal{S}$ immediately enters in 
	a domain $G^{j}$, where the solution is unique   due to Theorems \ref{thm:Caratheodory_iODE} and \ref{thm:uniqness_iODE}.
\end{proof}	

The corollary given below follows from above theorems.
\begin{corollary}\label{cor:unique_mix_iODE}\itshape
		Let Assumption \ref{as:ex_IDE} hold. Let    $\tilde {\mathcal{S}}\subset \R^n$ (resp., $\mathcal{S}\subset \R^n$) be a discontinuity set of the piece-wise 	 (resp., one-sided) Lipschitz continuous  function $\tilde f$ (resp., $f$). Let a measurable function $(t,x,\gamma)\in \R\times \R^n\times \R^n \mapsto f^0(t,x,\gamma)\in \R^n$ be
	locally one-sided Lipschitz in $x$, locally Lipschitz in $\gamma$.
	If \vspace{0mm}
		\begin{itemize} 
		\item[1)]	$f$  satisfies
		 Assumption \ref{as:uniq_3} with
		$\mathcal{M}\subset \mathcal{S} : \mathcal{M}\cap \tilde {\mathcal{S}}=\emptyset$;
		\item[2)]  $\forall x\!\in\! \mathcal{M}$, \!$\exists \hat \epsilon\!>\!0$:\!  $\left(K[f](t,x)\!+\!\gamma\right)\,\!\cap\,\mathcal{C}_{\mathcal{M}}(x)\!=\!\{f^0(t,x,\gamma)\}$ is a singleton for almost all $t\in \R$ and 
		all $\gamma\in \mathcal{B}(\hat \epsilon)$;
		\item[3)]  $K[f](t,x)\cap \mathcal{C}_{\tilde{\mathcal{S}}}(x)\!=\!\emptyset$ for all $x\!\in\! \tilde{\mathcal{S}}$ and almost all $t\!\in\! \R$;\vspace{0mm}
	\end{itemize}
then
	the IVP  \eqref{eq:iODE}, \eqref{eq:ch2_initialcondition} has a Filippov solution being unique,  at least, locally in the forward time. 	
\end{corollary}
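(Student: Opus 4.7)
The plan is to do a case analysis on the location of $x^0$, reducing each case to either Theorem \ref{thm:unique_SM_iODE} (sliding) or Theorem \ref{thm:unique_no_SM_iODE} (switching), and then glue the arguments together using the smallness of the integral memory term at the initial instant.

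\textbf{Case 1: $x^0 \notin \mathcal{S}\cup \tilde{\mathcal{S}}$.} Then $x^0$ lies in the interior of some $G^j$ where $f=f^j$ is locally one-sided Lipschitz and $\tilde f$ is locally Lipschitz. By continuity, any Filippov solution stays in $G^j$ for a short time, so the IVP reduces to the hypotheses of Theorem \ref{thm:uniqness_iODE}, giving local-in-time uniqueness.

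\textbf{Case 2: $x^0 \in \mathcal{S}\backslash \mathcal{M}$ or $x^0\in \tilde{\mathcal{S}}$.} The first clause of Assumption \ref{as:uniq_3} yields $F_{\mathcal{S}}(t,x^0)=\emptyset$, and condition~3) gives the analogous property on $\tilde{\mathcal{S}}$. Since $K[f](t,x^0)$ and $\mathcal{C}_{\mathcal{S}}(x^0)$ (resp.\ $\mathcal{C}_{\tilde{\mathcal{S}}}(x^0)$) are compact sets, the empty intersection is stable under an additive perturbation of size at most some $\hat\epsilon>0$. The integral term
$
\gamma(t)=\int_{t^0}^{t}\Phi(t,\tau)\tilde f(\tau,x(\tau))\,d\tau
$
is absolutely continuous with $\gamma(t^0)=\zero$, so $\|\gamma(t)\|<\hat\epsilon$ on a short interval $[t^0,t^0+t']$. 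Repeating the argument of Theorem \ref{thm:unique_no_SM_iODE}, any Filippov solution must immediately leave $\mathcal{S}\backslash\mathcal{M}$ (resp.\ $\tilde{\mathcal{S}}$) and enter some open $G^j\setminus\tilde{\mathcal{S}}$, reducing to Case 1.

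\textbf{Case 3: $x^0\in \mathcal{M}$ (the main case).} By the second clause of Assumption \ref{as:uniq_3}, each $f^j$ with $j\in \mathcal{N}(x^0)$ points into $\R^n\setminus \mathcal{C}_{\bar G^j}(x^0)$, so in the perturbed differential inclusion
$
\dot x(t)\in K[f](t,x(t))+\mathcal{B}(\hat\epsilon)
$
trajectories cannot enter any $G^j$ near $x^0$. Arguing as in the proof of Theorem \ref{thm:unique_SM_iODE}, one picks $t'>0$ small enough that $\|\gamma(t)\|\leq \hat\epsilon$ on $[t^0,t^0+t']$, whence any Filippov solution of \eqref{eq:iODE} starting at $x^0$ stays in $\mathcal{M}$ on that interval. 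Using $\mathcal{M}\cap\tilde{\mathcal{S}}=\emptyset$, $\tilde f$ is locally Lipschitz along the trajectory, and condition 2) collapses the set-valued dynamics on $\mathcal{M}$ to the single-valued IDE
\begin{equation*}
\dot x(t)=f^0\bigl(t,x(t),\gamma(t)\bigr), \qquad \gamma(t)=\int_{t^0}^{t}\Phi(t,\tau)\tilde f(\tau,x(\tau))\,d\tau.
\end{equation*}
Since $f^0$ is one-sided Lipschitz in $x$ and Lipschitz in $\gamma$, the Gr\"onwall-type inequality \eqref{eq:proof_unique} from the proof of Theorem \ref{thm:uniq_one_side_IDE} applies verbatim (with an extra Lipschitz factor absorbed into $\ell(t)$), giving $e(t)\equiv 0$ on $[t^0,t^0+t']$.

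The main obstacle is reconciling the pointwise geometric conditions of Assumption \ref{as:uniq_3} (which are stated for the \emph{unperturbed} field $f$) with the presence of the non-local memory term $\gamma(t)$. This is handled by the robust-emptiness / robust-singleton observations in conditions 2)--3), together with the key fact that $\|\gamma(t)\|\to 0$ as $t\to t^0$, so that the geometric transversality/singleton properties persist on a short initial interval. Once local invariance of $\mathcal{M}$ (or local escape from $\mathcal{S}\setminus\mathcal{M}$ and $\tilde{\mathcal{S}}$) is secured, the three cases chain together to yield forward uniqueness.
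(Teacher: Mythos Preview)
Your proposal is correct and takes essentially the same approach as the paper, which states only that the corollary ``follows from above theorems'' (namely Theorems~\ref{thm:unique_SM_iODE} and~\ref{thm:unique_no_SM_iODE}); you have simply spelled out the case analysis the paper leaves implicit. One minor point: in Case~3 your argument that trajectories ``cannot enter any $G^j$'' shows invariance of $\mathcal{S}$, not of $\mathcal{M}$ directly; to finish you should also invoke the first clause of Assumption~\ref{as:uniq_3} (empty $F_{\mathcal{S}}$ on $\mathcal{S}\setminus\mathcal{M}$) to rule out the trajectory drifting from $\mathcal{M}$ into $\mathcal{S}\setminus\mathcal{M}$ while remaining on $\mathcal{S}$, but this is exactly the combination of the two theorems the paper intends.
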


{
		Theorems \ref{thm:unique_SM_iODE},  \ref{thm:unique_no_SM_iODE} and Corollary \ref{cor:unique_mix_iODE}  prove only a local-in-time uniqueness of solutions. 
		For ODEs, due to the \textit{semigroup property of solutions} (see, e.g., \cite{CoddingtonLevinson1955:Book}), the local-in-time uniqueness implies  the uniqueness on the whole interval of the existence of solution. For IDEs, the semigroup property is not valid, so, a Filippov solution of IDE may be unique on a short interval of  time and non-unique on a long interval of time.   Utkin solutions introduced below solve this problem in some cases. 	
}
\subsection{Equivalent control method for systems modeled by IDEs}
Let us consider a control system modeled by the IDE \eqref{eq:iODE_control} with
measurable functions $f$ and $\tilde f$ being continuous in $(x,u)\in \R^n\times \R^m$ for almost all $t\in \R$.
\begin{definition}[Utkin solution of IDE]\label{def:Utkin_sol_iODE}
\itshape An absolutely continuous function $x: \mathcal{I}\to \R^n$  is said to be an Utkin solution of the first kind (resp., of the second kind) of the IVP  \eqref{eq:iODE_control}, \eqref{eq:ch2_initialcondition} if $x(t^0)=x^0$ and there exists a measurable function $u_{\rm eq}:\mathcal{I}\to \R^m$ such that $u_{\rm eq}(t)\in U(t,x(t))$ and\vspace{-1mm}
\begin{equation}\label{eq:iODE_Utkin}
\!\,	\dot x(t)\!=\! f(t,x(t), u_{\rm eq}(t))\!+\!\!\int^t_{t^0}\!\!\!\Phi(t,\tau)\tilde f(\tau,x(\tau),u_{\rm eq}(\tau)) d\tau, \!\vspace{-1mm}
\end{equation}
almost everywhere on $\mathcal{I}$, where $U=K[u]$ is defined by \eqref{eq:Filippov_reg}
(resp., $U=(K[u_1], ..., K[u_m])^{\top}$ for $u=(u_1, u_2,...,u_m)^{\top}$).
\end{definition}

Similarly to ODEs,  
the set of Utkin solutions of the first  kind is a subset of the set of the Utkin solutions of the second kind, since $K[u](t,x(t))\subset (K[u_1](t,x(t)),...,K[u_m](t,x(t)))^{\top}$. They always coincide for single input systems
(i.e., if $m=1$).
Utkin solutions always exist for affine-in-control IDEs.
\begin{theorem}\label{thm:sol_iODE_Utkin}\itshape 
 Let $f$ and $\tilde f$ in \eqref{eq:iODE_control} have the form\vspace{-1mm}
\begin{equation*}
	f(t,x,u)\!=\!a(t,x)\!+\!b(t,x)u,\quad  \tilde f(t,x,u)\!=\!\tilde a(t,x)\!+\!\tilde b(t,x)u,\vspace{-1mm}
\end{equation*}
where $t\in \R, x\in \R^n, u\in \R^m$ and 
measurable functions $a,\tilde a:\R\times \R^n \to \R^n$, $b,\tilde b: \R\times \R^n\to \R^{n\times m}$ are locally essentially bounded and continuous in $x$.
If  a measurable function $u:\R\times\R^n\to \R^n$ is locally essentially bounded then,
for any $t^0\in \R$ and any $x^0\in \R^n$,
the IVP \eqref{eq:iODE_control},\eqref{eq:ch2_initialcondition}  has an Utkin solution defined, at least, locally in the forward time.
For the affine-in-control system, any Utkin solution of the first kind  is a Filippov solution.
\end{theorem}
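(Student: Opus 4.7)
The plan is to imitate the proof of Theorem \ref{thm:sol_iODE} combined with the strategy behind Theorem \ref{thm:UtkinZolezzi}: mollify $u$ in the state, solve the smoothed IDE by Theorem \ref{thm:Caratheodory_iODE}, and extract a weak-$\ast$ limit of the mollified control as the candidate equivalent control $u_{\rm eq}$. I would fix $t^0\in\R$, $x^0\in\R^n$ and introduce the standard $x$-mollification
\[
u^{\eta}(t,x)=\tfrac{1}{|\mathcal{B}(\eta)|}\int_{\mathcal{B}(\eta)}u(t,x+y)\,dy,\qquad \eta\in(0,\bar\eta],
\]
which is measurable in $t$, continuous in $x$, and uniformly locally essentially bounded. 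The approximating affine-in-control IDE with right-hand side $a(t,x^{\eta})+b(t,x^{\eta})u^{\eta}(t,x^{\eta})+\int^{t}_{t^0}\Phi(t,\tau)[\tilde a(\tau,x^{\eta})+\tilde b(\tau,x^{\eta})u^{\eta}(\tau,x^{\eta})]\,d\tau$ has a Carath\'eodory solution $x^{\eta}$ by Theorem \ref{thm:Caratheodory_iODE}. A literal repetition of Step 2 of the proof of Theorem \ref{thm:sol_iODE} yields equicontinuity and uniform boundedness of $\{x^{\eta}\}$ on some interval $\mathcal{I}=[t^0,t^0+\alpha]$, so Arzel\`a--Ascoli produces a subsequence $x^{\eta_k}\to x$ uniformly on $\mathcal{I}$.

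Next, I set $u^{\eta_k}_{\rm eq}(t):=u^{\eta_k}(t,x^{\eta_k}(t))$, which is bounded in $L^{\infty}(\mathcal{I};\R^m)$, and by Banach--Alaoglu extract a weak-$\ast$ limit $u_{\rm eq}\in L^{\infty}(\mathcal{I};\R^m)$ along a further subsequence. Rewriting the mollified IDE in integrated form and passing to the limit termwise, $a(\cdot,x^{\eta_k})\to a(\cdot,x)$ strongly by continuity in $x$, while the integrals $\int^{t}_{t^0}b(s,x^{\eta_k}(s))u^{\eta_k}_{\rm eq}(s)\,ds$ converge pointwise in $t$ because the integrands $\mathbf{1}_{[t^0,t]}(s)\,b(s,x^{\eta_k}(s))$ converge strongly in $L^1$ and $u^{\eta_k}_{\rm eq}\rightharpoonup^{\ast}u_{\rm eq}$ in $L^{\infty}$; Fubini and dominated convergence (using local essential boundedness of $\Phi$) reduce the double integral involving $\tilde a,\tilde b$ to the same template. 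Hence $x$ satisfies the identity in Definition \ref{def:Utkin_sol_iODE} with $u_{\rm eq}$.

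To verify admissibility $u_{\rm eq}(t)\in K[u](t,x(t))$ a.e., I use that the mollifier satisfies $u^{\eta_k}_{\rm eq}(t)\in\overline{\co}\,u(t,x^{\eta_k}(t)+\mathcal{B}(\eta_k)\setminus M)$ for every Lebesgue-null $M\subset\R^n$ (mollification ignores null sets), and the uniform convergence $x^{\eta_k}\to x$ implies $u^{\eta_k}_{\rm eq}(t)\in\overline{\co}\,u(t,x(t)+\mathcal{B}(\delta)\setminus M)$ for every fixed $\delta>0$ and $k$ large. A Mazur-type argument applied to the closed convex-valued measurable multifunction $t\mapsto\overline{\co}\,u(t,x(t)+\mathcal{B}(\delta)\setminus M)$ transfers this inclusion to the weak-$\ast$ limit $u_{\rm eq}(t)$ almost everywhere; intersecting over $\delta>0$ and Lebesgue-null $M$ reproduces the formula \eqref{eq:Filippov_reg} and gives $u_{\rm eq}(t)\in K[u](t,x(t))$ a.e.

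For the second claim, the identity $K[f(\cdot,u(\cdot))](t,x)=a(t,x)+b(t,x)K[u](t,x)$, valid for almost all $t$ due to continuity of $a,b$ in $x$ (as recalled just below Theorem \ref{thm:UtkinZolezzi}), yields $f(t,x(t),u_{\rm eq}(t))\in K[f(\cdot,u(\cdot))](t,x(t))$ pointwise, and analogously for $\tilde f$; Aumann's integral then places the integral term of \eqref{eq:iODE_Utkin} inside the right-hand side of \eqref{eq:Filippov_iODE}, so any Utkin solution of the first kind is a Filippov solution. The hard step will be the admissibility verification: weak-$\ast$ limits recombine values across time, and the Mazur convex combinations must be shown to remain in the set-valued envelope $\overline{\co}\,u(t,x(t)+\mathcal{B}(\delta)\setminus M)$ pointwise a.e.\ in $t$, not merely in a global $L^{\infty}$ sense, which typically requires Castaing's representation of measurable multifunctions or a pointwise application of the Filippov selector Lemma \ref{lem:Filippov_selector}.
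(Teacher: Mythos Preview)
Your approach is correct and closely parallels the paper's, but the paper packages the compactness step more economically. Instead of extracting a weak-$\ast$ limit of $u^{\eta_k}_{\rm eq}$ via Banach--Alaoglu and then wrestling with the admissibility $u_{\rm eq}(t)\in K[u](t,x(t))$ through a Mazur/weak-closure argument, the paper augments the state with the primitive $\xi^{\eta}(t):=\int_{t^0}^{t}u^{\eta}(\tau,x^{\eta}(\tau))\,d\tau$, applies Arzel\`a--Ascoli to the \emph{pair} $(x^{\eta},\xi^{\eta})$, and then literally reruns Step~3 of the proof of Theorem~\ref{thm:sol_iODE} on the augmented inclusion $\dot\xi^{\eta}\in U^{\eta}(t,x^{\eta})$. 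The equivalent control is read off as $u_{\rm eq}=\dot\xi$, and the admissibility $\dot\xi(t)\in U(t,x(t))$ a.e.\ drops out of that Step~3 argument for free, with no separate appeal to weak closures of selection sets. The two routes are essentially equivalent under the hood --- uniform convergence of primitives of a uniformly bounded sequence is precisely weak-$\ast$ convergence tested against indicator functions, and the paper's $\limsup$-against-$v$ argument is a bare-hands proof of the weak-closure property you invoke --- but the paper's version stays entirely within the machinery already built for Theorem~\ref{thm:sol_iODE}, whereas yours imports Banach--Alaoglu and the weak-$\ast$ closedness of $\{w:w(t)\in F(t)\text{ a.e.}\}$ for closed convex measurable $F$, which is exactly the step you correctly flag as delicate. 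Both approaches handle the second claim identically via $K[a+bu]=a+bK[u]$.
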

\begin{proof}
{
	Let us denote $f^{\eta}\!=\!a+bu^{\eta}, \tilde f^{\eta}\!=\!\tilde a+\tilde bu^{\eta}$, \vspace{-1mm}
	\[
   u^{\eta}(t,x)\!=\!\tfrac{1}{\eta}\int_{\|y\|\leq \eta} \!\!\!\!\!\!u(t,x+y) dy, \vspace{-1mm}
	\]
\[
U^{\eta}(t,x)=\overline{\co} \bigcup_{y,z\in \mathcal{B}(\eta)}  U(t,x+y)+z,\vspace{-1mm}
\]
\[
\left( 
\begin{smallmatrix}
	F^{\eta}(t,x)\\
	\tilde F^{\eta}(t,x)
\end{smallmatrix}
\right)=  \bigcup_{\hat u\in U^{\eta}(t,x)} \left( 
\begin{smallmatrix}
	a(t,x)+b(t,x)\hat u\\
	\tilde a(t,x)+\tilde b(t,x)\hat u
\end{smallmatrix}
\right),\vspace{-1mm}
\]
where $U=K[u]$ or $U=(K[u_1],....,K[u_m])$.
By construction, $u^{\eta}(t,x)\in U^{\eta}(t,x)$, $f^{\eta}(t,x)\in F^{\eta}(t,x)$, $\tilde f^{\eta}(t,x)\in\tilde F^{\eta}(t,x)$ for almost all $t$ and all $x$. Since $a,\tilde a,b,\tilde b$ are continuous in $x$ then, for any $\eta>0$, the functions $f^{\eta}$ and $\tilde f^{\eta}$ satisfy Caratheodory Condition and the set-valued mappings   
$F^{\eta}$ and $\tilde F^{\eta}$ are compact-valued, convex-valued and upper-semicontinuous in $x$. 
Let us denote $\xi^{\eta}(t)=\int^t_{t^0} u^{\eta}(\tau,x^{\eta}(\tau)) d\tau$.
Repeating the step 2 of the proof of Theorem \ref{thm:sol_iODE} 
we conclude that the family of pairs $(x^{\eta},\xi^{\eta})$ 
is uniformly bounded and equicontinuous on some time interval $\mathcal{I}$ and  there exists a sequence $\eta_i :\lim_{i\to \infty}\eta_i=0$ and a pair of absolutely continuous functions $(x,\xi)$ such that $(x^{\eta_i},\xi^{\eta_i})\to (x,\xi)$ as $i\to +\infty$ uniformly on $\mathcal{I}$.
 Since $a,\tilde a,b,\tilde b$ are continuous in $x$, then, repeating the step 3) of the proof of Theorem \ref{thm:sol_iODE} we derive  $\dot \xi(t)\in  U(t,x(t))$ and 
 $\dot x(t)\in   a(t,x(t))+b(t,x(t))U(t,x(t))+ \int^t_{t^0}\Phi(t,\tau) \tilde a(\tau,x(\tau)) d\tau +\int^t_{t^0}\Phi(t,\tau) b(\tau,x(\tau))\dot \xi(\tau)d\tau$
for almost all $t\!\in\! \mathcal{I}$. So, $x$ is an Utkin solution with  $u_{\rm eq}\!=\!\dot \xi$.}
Due to continuity  of $f$ and $\tilde f$ in $x$, it holds  
$ K[f]=a+bK[u]
$ and  
$K[\tilde f]=\tilde a+\tilde bK[u]$.
So, any Utkin solution of the first kind is a Filippov solution.
\end{proof}	

  A set of Filippov solutions of the affine-in-control { discontinuous IDE} may be larger than a set of Utkin solutions of the first kind. This is a specific feature of IDEs. For affine-in-control discontinuous ODEs ($\Phi=\zero$), these sets are identical. 
\addtocounter{example}{-1}
\begin{example}[Continued]\itshape 
	The IVP \eqref{eq:ex_5} has the unique Utkin solution of the first kind: $x_1(t)=x_2(t)=0,\forall t\geq 0$ with the equivalent control $u_{\rm eq}(t)=0$. 
\end{example}

At the first sight, in view of the above example, the Utkin solutions of the first kind seems to be more appropriate 
for control systems design. Indeed, the integral part in IDE \eqref{eq:iODE_control} contains the control signal defined in past, i.e., for $\tau \leq t$. At time $t$,  this control is already determined, so a non-uniqueness of  a control signal in past  looks artificial. However,  Utkin solutions of the second kind (as well-as Filippov solutions) may correspond to a more realistic  dynamics of a MIMO system with  inputs generated by independent actuators.
 \begin{example}\itshape 
 The Utking solution of the first kind  for\vspace{-1mm}
\begin{equation}\label{eq:ex_7}\small
	\left\{
	\begin{split}
		\dot x_1(t)\!=&u_1(x_{1}(t)),  	\quad\quad\quad \quad\quad \;\;\; x_1(0)=0,\\[-1mm]
		\dot x_2(t)\!=&\smallint^t_{0} u_2(x_{1}(\tau)) d\tau, \quad\quad  x_2(0)=0,\\[-1mm]
		u_i(x)\!=&-\sign(x_1), \quad i\!=\!1,2, \quad x\!=\!(x_1,x_2)^{\top}\!\in\!\R^2
	\end{split}
	\right.\vspace{-1mm}
\end{equation}
is unique and given by $x_1(t)=x_2(t)=0,\forall t\geq 0$. 
The set of Filippov solutions of the IVP \eqref{eq:ex_7} 
coincides with the sets of Utkin solutions of the second kind of the IVP \eqref{eq:ex_7}  and 
coincides with the set of Filippov solutions of the IVP  \eqref{eq:ex_5} (see, Example \ref{ex:5}). If control inputs $u_1$
and $u_2$ are assumed to be independent (e.g., realized by different actuators), then, in practice, the input signals cannot be identical. In this particular case, the Utkin solution of the second kind provides a more realistic description of the system motion than  the Utkin solution of the first kind.
\end{example}

{Utkin solutions of the first kind for an affine-in-control IDE with a smooth sliding surface may be unique globally in time.}

	\begin{corollary}[On equivalent control on smooth surface]\itshape\label{eq:uniq_Utkin_IDE}
		Let $\mathcal{S}=\{x\in \R^{n}:s(x)=0\}, s\in C^1(\R^n;\R^m)$ be the discontinuity  set of
		the piece-wise  Lipschitz continuous function $u:\R\times \R^n\to \R^m$. Let conditions of  Theorem \ref{thm:sol_iODE_Utkin} be fulfilled, the matrix $G(t,x)=\frac{\partial s(x)}{\partial x}b(t,x)$ be non-singular for almost all $t\in \R$ and for all $x\in \R^n$ and $(t,x)\to G^{-1}(t,x)$ be locally essentially bounded on $\R\times \R^{n}$.\vspace{-1mm}
		\begin{itemize}
		\item[i)]  If $t\in [t^0,t^1] \mapsto x(t)\in \R^n$ with $\mathcal{I}=[t^0,t^1]$ is an Utkin solution of  \eqref{eq:iODE_control}, \eqref{eq:ch2_initialcondition}  such that $x(t)\notin \mathcal{S}$ for $t\in [t^0,t^*]$ and $x(t)\in \mathcal{S}$ for $t\in[t^*,t^1]$ then the equivalent control $u_{\rm eq}:[t^0,t^1]\to \R^m$  is given by \vspace{-1mm}
		\begin{equation}\small
		u_{\rm eq}(t) =\left\{
		\begin{array}{lcc}
			u(t,x(t)) & \text{ for } & t\in[t^0,t^*),\\
			\sum^{\infty}_{i=0}(\mathcal{K}^{i}g)(t) & \text{ for } & t\in[ t^*,t^1],
			\end{array}
		\right.\vspace{-1mm}
		\end{equation}
	where  $\mathcal{K}:L^{\infty}((t^*,t^1);\R^m)\to L^{\infty}((t^*,t^1);\R^m) $ is a linear bounded operator \vspace{-1mm}
	\begin{equation}\small 
	(\mathcal{K}\tilde u)(t)\!=\!\!\int^t_{t^*}\!\!\!G^{-1}(t,x(t))\Phi(t,\tau) \tilde b(\tau,x(\tau)) \tilde u(\tau) d\tau,\vspace{-1mm}
	\end{equation}
	and $g\!\in\! L^{\infty}((t^*,t^1);\R^n)$ is as follows\vspace{-1mm}
					\begin{equation*}\small 
			\!\!g(t)\!=\!- G^{-1}(t,x(t))\left.\tfrac{\partial s(y)}{\partial y}\right|_{y=x(t)} \!\left(a(t,x(t))\!+\!\tilde g(t)\right),\vspace{-2mm}
		\end{equation*}
	\begin{equation*}\small 
			\tilde g(t)\!=\!\int^{t^*}_{t^0}\!\!\!\!\!\Phi(t,\tau) \left(a(\tau,x(\tau)) \!+\!b(\tau,x(\tau)) u(t,x(\tau)\right) d \tau.\vspace{-1mm}
	\end{equation*}
			
            \item[ii)] If $a,b,\tilde a,\tilde b, \frac{\partial s}{\partial x}$ 
             are locally Lipschitz continuous  and $$G(t,y_1)=G(t,y_2),\tilde b(t,y_1)=\tilde b(t,y_2),  \quad \forall y_1,y_2\in \mathcal{S}$$
            then any Utkin solution of   \eqref{eq:iODE_control}, \eqref{eq:ch2_initialcondition} belonging to $\mathcal{S}$ is unique. 
					\end{itemize} 
	\end{corollary}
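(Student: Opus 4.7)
The plan is to prove parts (i) and (ii) in sequence, with part (i) giving an explicit structural formula for $u_{\rm eq}$ that part (ii) then exploits for uniqueness.

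For part (i), I would first observe that on $[t^0, t^*)$, since $x(t) \notin \mathcal{S}$ and $\mathcal{S}$ is the discontinuity set of the piece-wise Lipschitz $u$, the set $U(t, x(t))$ reduces to the singleton $\{u(t, x(t))\}$, so the Utkin selection is forced to equal $u(t, x(t))$. On the sliding interval $[t^*, t^1]$, absolute continuity of $x$ together with $s \in C^1$ gives $\tfrac{d}{dt} s(x(t)) = \tfrac{\partial s}{\partial x}(x(t))\,\dot x(t) = 0$ almost everywhere. Substituting the Utkin dynamics \eqref{eq:iODE_Utkin}, splitting the integral at $t^*$ and using the previous step to replace $u_{\rm eq}$ by $u(\cdot, x(\cdot))$ on $[t^0, t^*]$, then left-multiplying by $G^{-1}(t, x(t))$, yields a linear Volterra integral equation of the second kind $u_{\rm eq} = g + \mathcal{K} u_{\rm eq}$ with $g$ and $\mathcal{K}$ as in the statement. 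The local essential boundedness of $G^{-1}$, $\Phi$, and $\tilde b$ makes $\mathcal{K}:L^{\infty}((t^*,t^1);\R^m)\to L^{\infty}((t^*,t^1);\R^m)$ a bounded linear Volterra operator, and the standard iterated-kernel estimate $\|\mathcal{K}^{i}\tilde u\|_{L^{\infty}} \le C^{i}(t^1-t^*)^{i}\|\tilde u\|_{L^{\infty}}/i!$ shows that the Neumann series $\sum_{i\ge 0}\mathcal{K}^{i}g$ converges absolutely and is the unique $L^{\infty}$ solution.

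For part (ii), let $x_1, x_2$ be two Utkin solutions of the IVP with the same initial datum $x^0 \in \mathcal{S}$ and satisfying $x_i(t) \in \mathcal{S}$ for all $t \in [t^0, t^1]$, and let $u_1, u_2$ denote their equivalent controls. Set $e = x_1 - x_2$ and $v = u_1 - u_2$. The crucial role of the matching hypothesis $G(t,y_1)=G(t,y_2)$, $\tilde b(t,y_1)=\tilde b(t,y_2)$ on $\mathcal{S}$ is that the operator $\mathcal{K}$ from part (i) is identical along both trajectories, since its kernel involves only values of $G^{-1}$ and $\tilde b$ at points of $\mathcal{S}$. Subtracting the two Volterra equations therefore gives $v = (g_1-g_2) + \mathcal{K} v$, and the locally Lipschitz assumptions on $a, b, \tilde a, \tilde b, \partial s/\partial x$ let me estimate $\|g_1(t) - g_2(t)\|$ by a Lipschitz combination of $\|e(t)\|$ and $\int_{t^0}^t (\|e(\tau)\| + \|v(\tau)\|)\,d\tau$. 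Coupling this with the IDE for $e$ and using the same Gronwall-type machinery as in \eqref{eq:proof_unique} (applied to $\|e\|^2$ plus a suitable $L^2$-norm of $v$, after inverting $I-\mathcal{K}$ on $v$) forces $e \equiv 0$ and $v \equiv 0$.

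The main obstacle is the implicit coupling between the equivalent control, defined by a Volterra equation whose coefficients depend on the trajectory, and the trajectory itself, defined by an IDE containing the equivalent control. The matching condition in (ii) is exactly what is needed to decouple the Volterra inverse from the choice of trajectory within $\mathcal{S}$; without it, an extra term quantifying the variation of $G^{-1}$ and $\tilde b$ between $x_1$ and $x_2$ would appear in the estimate for $v$, and the Lipschitz/Gronwall closure would fail in general.
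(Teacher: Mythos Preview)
Your proposal is correct and follows essentially the same route as the paper: derive the Volterra equation of the second kind for $u_{\rm eq}$ on the sliding interval, solve it by the Neumann series with the factorial bound $\|\mathcal{K}^i\|\le C^i(t^1-t^*)^i/i!$, and for (ii) exploit that the matching hypothesis makes $\mathcal{K}$ trajectory-independent so that the difference of equivalent controls is controlled by $g_1-g_2$, closing with a Gronwall-type estimate on $\|e\|^2$ as in \eqref{eq:proof_unique}. One cosmetic remark: $g$ is the forcing term of the Volterra equation and does not depend on $u_{\rm eq}$, so the $\|v(\tau)\|$ contribution in your bound for $\|g_1(t)-g_2(t)\|$ is superfluous (the paper bounds it purely by $\|e(t)\|$ and $\|e\|_{L^2(t^*,t)}$); this is harmless for the argument but you can drop it.
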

\begin{proof}
	i)  If $x(t)\!\in\! \mathcal{S}$ then $\frac{d s(x(t))}{dt}\!=\!\left.\frac{\partial s(y)}{\partial y}\right|_{y=x(t)}\dot x(t)\!=\!0$ and, taking into account,\vspace{-1mm}
		\begin{equation}\label{eq:IDE_affine_SMC_u_eq}\small
	\begin{split}
		\dot x(t)\!=&a(t,x(t))+b(t,x(t))u_{\rm eq}(t)\\[-1mm]
		&+\!\int^t_{t^0}\!\!\!\Phi(t,\tau)\left(\tilde a(\tau,x(\tau))\!+\!\tilde b(\tau,x(\tau))u_{\rm eq}(\tau) \right)d\tau,\vspace{-2mm}
	\end{split}
	\end{equation}
	we conclude that, in the sliding mode, $u_{\rm eq}$ is the unique solution of  
	the integral Volterra equation of the second kind\vspace{-1mm}
	\begin{equation}\label{eq:u_eq_Volterra}\small 
		u_{\rm eq} (t)\!=\!g(t)\!+\!(\mathcal{K}u_{\rm eq})(t), \quad t\in [t^*,t^1].\vspace{-1mm}
	\end{equation}
This integral  equation has a unique solution since \vspace{-1mm}
	 \begin{equation*}\small
M\!=\!\!\esssup_{t^*\leq \tau\leq t\leq t^1}\| G^{-1}(t,x(t))\Phi(t,\tau) \tilde b(\tau,x(\tau))\| d\tau\!<\!+\infty.\vspace{-2mm}
	\end{equation*}
	Indeed, since $\|\mathcal{K}^n\|_{\mathcal{L}(L^{\infty};L^{\infty})}\leq \frac{M^n(t^1-t^*)^n}{n!}$, then  
	the Neumann series $\sum^{\infty}_{i=0}\mathcal{K}^{i}$ is convergent in the  Banach space $\mathcal{L}(L^{\infty};L^{\infty})$ of bounded linear operators $L^{\infty}\to L^{\infty}$, and  $u_{\rm eq}=\sum^{\infty}_{i=0}\mathcal{K}^{i} g$ is the solution of the integral equation \eqref{eq:u_eq_Volterra}.\\
	ii) For $t\in[t^*,t^1]$, 
	  the dynamics  is given by \eqref{eq:IDE_affine_SMC_u_eq}.
	Let $x_1(t), x_2(t)$ satisfy the above identity for $t\geq t^0$, $x_1(t)=x_2(t)=x(t)\notin \mathcal{S}$ for $t^0\leq t< t^*$ and $x_1(t)\in \mathcal{S}, x_{2}(t)\in \mathcal{S}$ for $t\in [t^*,t^1]$. 
	By assumption, the operator $\mathcal{K}$ is independent of $x(t)$.
For $j=1,2$, let $g_i$ be defined by replacing $x$ with  $x_j$ in the formula for $g$, respectively. Notice than $\tilde g$ is identical for both $x_1$ and $x_2$.
	Then, using Lipschitz continuity  in $x$ and local essential boundedness in $t$ of the functions  $a,b,\tilde a,\tilde b, \frac{\partial s}{\partial x}$,  for $e(t)=x_1(t)-x_2(t)$, we derive
	$
	\|g_1(t)-g_2(t)\|\leq \ell_1 \|e(t)\|
	$
	and 
	$
	\|Kg_1(t)-Kg_2(t)\|\leq \ell_2\int^t_{t^*}\|e(\tau)\| d\tau\leq \ell_2\sqrt{t-t^*}\|e\|_{L^2}
	$
	for some $\ell_1, \ell_2\!>\!0$, where $\|e\|^2_{L^2(t^*,t)}=\int^t_{t^*} e(\tau) d\tau$. Moreover, for $i\geq 1$, it holds
	$
	\|(K^ig_1)(t)-(K^ig_2)(t)\|\leq \tfrac{(2\ell_2)^i(t-t^*)^{i}}{(2i-1)!\sqrt{t-t^*}}\|e\|_{L^2}.$	
Therefore, for some $\ell_3,\ell_4,\ell_5\!>\!0$ and for all $t\!\in\! [t^*,t^1]$ we have \vspace{-3mm}
	\[\vspace{-1mm}\small
\begin{split}
	\tfrac{1}{2}\!\tfrac{d \|e(t)\|^2}{dt}\!\leq&\ell_3\|e(t)\|^2\!+\ell_4\|e(t)\|\sum_{i=1}^{\infty} \left\|\mathcal{K}^i_1g_1(t)\!-\!\mathcal{K}^i_2g_2(t) \right\|\\[-1mm]
	&\!\!\!+\!\ell_5 \|e(t)\|\!\int^t_{t^*}\!\! \|e(\tau)\|\!+ \!\!\sum_{i=1}^{\infty}\! \left\|\mathcal{K}^i_1g_1(\tau)\!-\!\mathcal{K}^i_2g_2(\tau) \right\| d\tau\\[-1mm]
	\leq& \ell_3\|e(t)\|^2+ \ell(t-t^*)\|e(t)\| \cdot \|e\|_{L^2(t^*,t)},\\
\end{split}
\]
where $s\mapsto \ell(s)\!=\!\ell_5\sqrt{s}\!+\!(\ell_4\!+\!\ell_5)
\sum_{i=1}^{\infty}\tfrac{(2\ell_2)^is^{i}}{(2i-1)!\sqrt{s}}$.
Since   $\sum_{i=1}^{\infty}\tfrac{(2\ell_2)^is^{i}}{(2i-1)!\sqrt{s}}=\sqrt{2\ell_2}(\exp(\sqrt{2\ell_2 s})-1)$  and $e(t^*)=0$ then the obtained estimate yields $e(t)\!=\!0,\forall t\in [t^*,t^1]$.
	\end{proof}
\begin{table}[b]
	\centering
		\caption{Types of solutions of the IDE \eqref{eq:iODE_control} and conditions to $f,\tilde f,u$}
	\begin{tabular}{c|c|c|c}
		\textbf{Solution} & \!\!\textbf{Class of $f,\tilde f$}\!\! & \textbf{Class of $u$} & \textbf{Uniqueness}\\ 
		\hline
		Classical    & continuous &  continuous& Lipschitz\\
		\cite{Lakshmikantham_etal1995:Book}& in $(t,x,u)$ & in $(t,x)$ &  \\
		\hline
		\!\!\!Carath\'eodory\!\!
		& \!\!discontin. in $t$;\!\! & \!\!discontin. in $t$;\!\!&    Lipschitz\\
		\cite{OReganMeehan1998:Book}  & \!\!contin. in $(x,u)$\!\!  & \!\!continuous in $x$\!\! &  \\
		\hline
		Filippov & \!\!discontinuous\!\! & discontinuous & piece-wise Lipschitz;\\
		 & in $(t,x,u)$ & in $(t,x)$ & \!$\tilde f$ is independent of $u$\\
		&&& \!\! and has no common \\
		& & & discontinuity  with $f$\\
		\hline
		Utkin & \!\!discontin. in $t$;\!\!& discontinuous & piece-wise Lipschitz;  \\
		& continuous&  in $(t,x)$ &  \!\!affine-in-control system;\!\!\!\\
		& in $(x,u)$&&\!smooth {and invariant}  \\
		& & & sliding surface
	\end{tabular}
	\label{tab:sol_IDE}
	\vspace{-3mm}
\end{table}

The table \ref{tab:sol_IDE} summarizes 
classes of solutions of the IDE \eqref{eq:iODE_control} and some conditions to its right-hand side, which are required to ensure existence/uniqueness.

\begin{remark}[On explicit Euler method for IDEs]\label{rem:1}
	
	If $f,\tilde f$ are piece-wise locally Lipschitz continuous functions in $x$ then approximate (numerical solution) of the IDE  \eqref{eq:iODE} can be found using the explicit Euler method  \vspace{-1mm}
	\begin{equation}\label{eq:explicit_Euler_IDE}
		\vspace{-1mm}
		\begin{split}
			x_{k+1}=&x_{k}+hf(t^0+kh,x_{k})+hI_k,\\
			I_k=&h\sum^{k}_{i=0} \Phi(t^0+kh,t^0+ih)\tilde f(t^0+ih,x_i), 
		\end{split}
	\end{equation}
	where $h>0$ is the sampling period, $k=0,1,2,...$, $x_k$ approximates $x(t^0+kh)$ and
	$I_k$ approximates the integral $\int^t_{t^0}\Phi(t,\tau)\tilde f(\tau,x(\tau)) d\tau$ by means of the method of rectangles. Indeed, since, on continuity domains, both functions $f,\tilde f$ are locally  Lipschitz continuous in $x$, any solution $x:\mathcal{I}\to \R^n$ of IDE  \eqref{eq:iODE} belonging to  this domain is unique and  the sequence of the Euler approximates
	\[
	x_{h}(t)\!=\!\tfrac{t^0\!+(k+1)h-t}{h}x_k+\tfrac{t-t^0\!-kh}{h}x_{k+1}, t\!\in\![t^0\!+kh,t^0\!+(k+1)h], 
	\]
	(being uniformly bounded and equicontinuous on $\mathcal{I}$) converges to $x$ uniformly on $\mathcal{I}$ as $h\to 0$. This conclusion remains valid  if the discontinuous IDE  \eqref{eq:iODE} has no sliding mode.
	
	In the sliding mode case, similarly to ODEs,
	the explicit Euler method correctly approximates the Filippov/Utkin solution of the IDE away an $\epsilon$-neighborhood of a sliding set $\mathcal{S}$, where $\epsilon\to 0$ as $h\to 0$.      
	A consistent discrete-time approximation of the motion on the sliding surface requires development
	of advanced numerical schemes \cite{AcaryBrogliato2010:SCL}, \cite{Polyakov_etal2019:SIAM_JCO}. This goes out of the scope of the paper.
\end{remark}
\section{Application of discontinuous IDEs for SMC design and analysis}\label{sec:SMC_design}
\subsection{SMC for a distributed  input delay system}
Let us consider a control system modeled by the IDE\vspace{-1mm}
\begin{equation}\label{eq:LTI_IDE}
	\begin{split}
		\dot x(t)=&Ax(t)+B(u(t)+\gamma(t))+p(t)\\
		&
		+\int^t_{t^0} \Phi(t,\tau) \tilde B (u(\tau)+\gamma(\tau))d
		\tau,
	\end{split}\vspace{-1mm}
\end{equation} 
where, $t>t_0\in \R, x(t)\in \R^n$, $u(t)\in \R^{m}$, $\Phi :\Delta\to \R^{n\times n}$ be locally essentially bounded on $\Delta$, $\gamma\in L^{\infty}(\R;\R^m)$ is a matched perturbation, $p\in L^{\infty}(\R;\R^n)$ is a possibly mismatched perturbation 
and $A\in \R^{n\times n}, B,\tilde B\in \R^{n\times m}$ are  known matrices. This IDE can be considered as a linear system with a  \textit{distributed input delay}. SMC design for a linear  system with a \textit{distributed state delay} can be found in \cite{Gouaisbaut_etal2001:TDS}. The aim is to stabilize the output \vspace{-1mm}
\begin{equation}\label{eq:LTI_y}
	y(t)=Cx(t), \quad C\in \R^{m\times n}, \quad \det (CB)\neq0\vspace{-1mm}
\end{equation}
to zero in a finite time. 
For  ODEs (i.e. for $\Phi=\zero$), the SMC theory (see, \cite{Utkin1992:Book}, \cite{EdwardsSpurgeon1998:Book}, \cite{Shtessel_etal2014:Book}) suggests  the  feedback\vspace{-1mm}
\begin{equation}\label{eq:SMC_LTI}
	u(t)=-\rho(CB)^{-1}\frac{y(t)}{\|y(t)\|_{P}}, \quad \rho >0,\vspace{-1mm}
\end{equation}
 where $\|y\|_{P}=\sqrt{y^{\top}Py}$ is the weighted Euclidean norm with a positive definite matrix $P\succ 0$. Let $\|Q\|_{P}=\sup_{y\neq 0} \frac{\|Qy\|_{P}}{\|y\|_{P}}$, $Q\in \R^{m\times m}$ be the matrix norm is induced by the weighted Euclidean norm $\|\cdot\|_{P}$.
 {The condition $\det(CB)\neq 0$ means that the output has the relative degree 1 with respect to the input. This is the standard  assumption for the first order sliding mode control design \cite{Utkin1992:Book}, \cite{Utkin_etal2020:Book}.
If the output has a higher relative degree with respect to the input, then, to solve the considered regulation problem, the high order sliding mode control design algorithms  \cite{Levant2003:IJC}, \cite{Shtessel_etal2014:Book} must be extended  to the IDE \eqref{eq:LTI_IDE}. This extension goes out of the scope of the paper.}

The single-input closed-loop system \eqref{eq:LTI_IDE}, \eqref{eq:SMC_LTI}, obviously, satisfies
Theorem \ref{thm:sol_iODE}. So, for any initial condition \eqref{eq:ch2_initialcondition}, it has a local-in-time  Filippov solution. 
So, \textit
{the closed-loop system is well-posed}  provided that solutions are unique. Let us show that, under some restrictions,
the feedback \eqref{eq:SMC_LTI} is a global
 finite-time stabilizer of $y$ to zero.
\begin{theorem}\label{thm:SMC_design_LTI_IDE}\itshape
	If $CA\!=\!\Lambda C$  with $\Lambda\!\in\! \R^{m\times m}$ such that \vspace{-1mm}

	\begin{equation}\label{eq:LMI}
		\Lambda^{\top}P+P\Lambda\preceq 0, \quad 0\prec P=P^{\top}\in \R^{m\times m},\vspace{-1mm}
	\end{equation}
	\begin{equation}\label{eq:con_rho}
		\rho \!\geq\!\tfrac{ \left\|C(B \gamma(t)+p(t))\right\|_{P}+\int^t_{t^0}\! \|C\Phi(t,\tau)\tilde B\gamma(\tau)\|_{P} d\tau+\delta}{1-M}, \vspace{-1mm}
	\end{equation}
	\begin{equation}\label{eq:con_Phi}
\small 	\esssup_{t\geq t^0}\int^t_{t^0}\|C\Phi(t,\tau)\tilde B(CB)^{-1}\|_{P} d\tau\leq M<1,\;\; \delta\!>\!0,\vspace{-1mm}
\end{equation}
	then, any  Filippov solution of the closed-loop system \eqref{eq:LTI_IDE}, \eqref{eq:LTI_y}, \eqref{eq:SMC_LTI} exists and defined for all $t\geq t^0$. Moreover, 
	$\frac{d}{dt} \|y(t)\|_{P}\leq -\delta$ for $\|y(t)\|_{P}\neq 0, t\geq t^0$ and 
	there exists
	$T\in \left[t^0,t^0+ |Cx(t^0)|/\delta\right]$ such that
	\begin{equation}
		y(t)=Cx(t)=\zero , \quad \forall t\geq T.
	\end{equation}
\end{theorem}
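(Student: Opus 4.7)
The plan is to reduce the $n$-dimensional closed-loop IDE to a scalar inequality for $V(t)=\|y(t)\|_P$ and then invoke a standard finite-time convergence argument. First, differentiate $y=Cx$ along the IDE \eqref{eq:LTI_IDE} and use the matching identity $CA=\Lambda C$ to obtain
\begin{equation*}
\dot y(t)=\Lambda y(t)+CBu(t)+CB\gamma(t)+Cp(t)+\int_{t^0}^t C\Phi(t,\tau)\tilde B(u(\tau)+\gamma(\tau))d\tau.
\end{equation*}
Substituting the feedback \eqref{eq:SMC_LTI} gives $CBu(t)=-\rho\,y(t)/\|y(t)\|_P$ in the regular regime, and on the discontinuity set $\{y=0\}$ the same computation is carried out for an arbitrary Filippov selection, exploiting that $K[y/\|y\|_P]\subset P^{-1/2}\bar{\mathcal{B}}(1)$ so the contribution from this term is bounded.

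Next, I would evaluate $\frac{d}{dt}\|y(t)\|_P^2=2y^\top P\dot y$ and split it into four summands. The drift term $2y^\top P\Lambda y\leq 0$ vanishes (in sign) thanks to the LMI \eqref{eq:LMI}. The nominal feedback term contributes $-2\rho\|y\|_P$. The matched/mismatched perturbations $CB\gamma+Cp$ contribute at most $2\|y\|_P\|C(B\gamma+p)\|_P$ by Cauchy--Schwarz in the $P$-inner product. The integral term is split into a $\gamma$-part, bounded by $2\|y\|_P\int_{t^0}^t\|C\Phi(t,\tau)\tilde B\gamma(\tau)\|_P\,d\tau$, and a $u$-part: substituting the feedback inside the integral and using the induced-norm bound $\|C\Phi(t,\tau)\tilde B(CB)^{-1}\|_P$ yields, via \eqref{eq:con_Phi}, at most $2\rho M\|y\|_P$. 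Dividing by $2\|y\|_P$ (valid wherever $\|y\|_P\neq 0$) and applying \eqref{eq:con_rho} produces $\frac{d}{dt}\|y(t)\|_P\leq -\rho(1-M)+\|C(B\gamma(t)+p(t))\|_P+\int_{t^0}^t\|C\Phi(t,\tau)\tilde B\gamma(\tau)\|_P d\tau\leq -\delta$, which is the claimed differential inequality.

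From this point, finite-time convergence is routine: integrating gives $\|y(t)\|_P\leq \|y(t^0)\|_P-\delta(t-t^0)$ while $y\neq 0$, so the reaching time $T$ satisfies $T-t^0\leq\|Cx(t^0)\|_P/\delta$; once $y$ hits zero it stays there because a Filippov selection in $K[u]$ can be chosen to hold $\dot V\leq 0$ on $\{y=0\}$ (the same inequality remains valid when interpreted in the Filippov sense). Global existence reduces to showing no finite-time blow-up of $x$: since $\|u(t)\|\leq \rho\|(CB)^{-1}\|$ pointwise, the right-hand side of \eqref{eq:LTI_IDE} is linearly bounded in $x$ and integrably bounded in $t$ (with $\Phi$ locally essentially bounded on $\Delta$), so a standard Gronwall argument on $\|x(t)\|$ extends any Filippov solution guaranteed by Theorem \ref{thm:sol_iODE} to all $t\geq t^0$.

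The main obstacle I would anticipate is the $u$-dependent integral term: without the smallness condition \eqref{eq:con_Phi} the distributed delay of the control feeds the discontinuous signal back into $\dot y$, and the reaching inequality can fail. The condition $M<1$ is precisely what allows the term to be absorbed into the nominal decay $-\rho\|y\|_P$ while leaving a strict residual $\delta$; verifying this absorption carefully, together with handling the Filippov selection uniformly (so the estimate is valid for almost every $t$, not just at regular points), is where the argument requires the most attention.
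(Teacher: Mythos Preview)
Your proposal is correct and follows essentially the same route as the paper: derive the output IDE via $CA=\Lambda C$, differentiate $\|y\|_P$ (the paper works directly with $\tfrac{d}{dt}\|y\|_P$ rather than $\|y\|_P^2$, but this is cosmetic), kill the drift with the LMI \eqref{eq:LMI}, bound the perturbation and integral-$\gamma$ terms by Cauchy--Schwarz in the $P$-inner product, absorb the integral-$u$ term via \eqref{eq:con_Phi} using that any Filippov selection of $y/\|y\|_P$ lies in the $P$-unit ball, and conclude $\tfrac{d}{dt}\|y\|_P\le-\delta$; global existence is handled by the same Gr\"onwall argument on $\|x\|$ (the paper does this step first, you do it last, which is immaterial). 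Your treatment of the invariance of $\{y=0\}$ is slightly informal but the standard argument you sketch---if $y$ left zero, the strict decay inequality on $\{y\neq 0\}$ would force $\|y\|_P<0$---is exactly what is needed.
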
  
\begin{proof}
	For any Filippov solution of the closed-loop system \eqref{eq:LTI_IDE}, \eqref{eq:LTI_y}, \eqref{eq:SMC_LTI} it holds\vspace{-1mm}
	\[\small
	\begin{split}
		\tfrac{d \|x(t)\|}{dt}\!\leq & \|Ax(t)\|\!+\!\rho\| B(CB)^{-1}y\|/\|y\|_{P}\!+\!\|B\gamma\|_{L^{\infty}}\!+\!\|p\|_{L^{\infty}}\!\\
		&\!\!+\!
		\left(\!\rho \| \tilde B(CB)^{-1}y\|/\|y\|_{P}\!+\!\|\tilde B\gamma\|_{L^{\infty}}\!\right)\!\!\int^t_{t^0}\!\!\|\Phi(t,\tau)\| d\tau.
	\end{split}\vspace{-1mm}
	\]
	Since $\Phi$ is measurable and  locally bounded on $\Delta$ then the function $$t\mapsto \int^t_{t^0}\|\Phi(t,\tau)\| d\tau$$ is measurable and locally bounded on $[t_0,+\infty)$. So, using Gr\"onwall--Bellman inequality, we conclude that $\|x(t)\|<+\infty$ for any finite  $t\geq t^0$.
	
	Assumption $CA=\Lambda C$ implies that the output $y$ satisfies \vspace{-1mm}
	\begin{equation}\label{eq:dyn_z_LTI}\small
		\begin{split}
			\dot y(t)\in&\,  \Lambda y(t)+CB\gamma(t)-\tfrac{\rho y(t)}{\|y(t)\|_{P}}+Cp(t)\\
			& +\!\int^t_{t^0}C\Phi(t,\tau)\tilde B
			\left( \gamma(\tau)-\tfrac{\rho (CB)^{-1} y(\tau)}{\|y(\tau\|)_{P}}\right)d
			\tau
		\end{split}\vspace{-1mm}
	\end{equation}
	for almost all $t\geq t^0$. Using \eqref{eq:con_Phi}, \eqref{eq:con_rho}  and \eqref{eq:LMI}, we derive \vspace{-1mm}
	\[
	\begin{split}
		\tfrac{d \|y(t)\|_{P}}{dt}=&\tfrac{y^{\top}(t)P\Lambda y(t)}{\|y(t)\|_{P}} +\tfrac{y^{\top}(t)PC(B\gamma(t)+p(t))}{\|y(t)\|_{P}}-\rho\tfrac{y^{\top}(t)Py(t)}{\|y(t)\|_{P}^2}\\
		&+\tfrac{y^{\top}(t)P}{\|y(t)\|_{P}}\int^t_{t^0}C\Phi(t,\tau)\tilde B
		\left( \gamma(\tau)-\tfrac{\rho (CB)^{-1} y(\tau)}{\|y(\tau\|)_{P}}\right)d
		\tau\\
		\leq &\|C(B\gamma(t)+p(t))\|_{P}\!-\!\rho \!+\rho M\\
		&+\int^t_{t^0} \|C\Phi(t,\tau)\tilde B\gamma(\tau)\|_{P} d\tau\leq-\delta<0\\
	\end{split} \vspace{-1mm}
	\]
	as long as $|y(t)|\neq 0$, so $y(t)=0$ for all $t\geq T$.
\end{proof}

If $\|\gamma\|_{L^{\infty}}\leq \bar \gamma$ and $\|p\|_{L^\infty}\leq \bar p$ then the inequality (51) holds for $\rho =(\sqrt{\|P\|}(\|CB\|\bar\gamma +\|C\|\bar p+M\|CB\|\bar \gamma)+\delta)/(1-M)$. Such a design based on upper estimates on uncertain parameters of the system perfectly follows SMC theory \cite{Utkin1992:Book}, \cite{EdwardsSpurgeon1998:Book} and appreciated by SMC practice \cite{Utkin_etal2009:Book}.

The above theorem presents a sufficient condition under which  $Cx=0$ is a sliding surface of the closed-loop system \eqref{eq:LTI_IDE}, \eqref{eq:LTI_y}, \eqref{eq:SMC_LTI}.  
The SMC  theory \cite{Utkin1992:Book} also studies a motion of the system on this surface. 
Similarly to  ODEs, the sliding motion can be studied using  equivalent control method.

\begin{corollary}\label{cor:Utkin_sol_LTI_SMC}
	\itshape
	Under conditions of Theorem \ref{thm:SMC_design_LTI_IDE}, the closed-loop system 
	\eqref{eq:LTI_IDE}, \eqref{eq:LTI_y}, \eqref{eq:SMC_LTI} has the unique Utkin solution with\vspace{-1mm}
	\begin{equation} \label{eq:LTI_IDE_u_eq}
		u_{\rm eq}(t)\!=\!\left\{
		\begin{array}{cl}
			\!-\rho\frac{(CB)^{-1}y(t)}{\|y(t)\|_{P}}, & \!\!t^0\!\leq\! t\!<\!T,\\
			\!\!\tilde u_{\rm eq}(t)-\gamma(t), &  \!\!t\!\geq \! T,
		\end{array}
		\right.  \;\tilde u_{\rm eq}\!=\!\sum_{i=0}^{\infty} \!\mathcal{K}^ig,\,
		\vspace{-1mm}
	\end{equation}
	where $T\in [t^0,t^0+T^{\max}]$ is, as before, the reaching time, and  the linear bounded operator  $\mathcal{K}: L^{\infty}((T,t);\R)\to L^{\infty}((T,t);\R)$ is given for $s\in[T,t]$ by\vspace{-1mm}
	\begin{equation}\label{eq:K_operator}	
		(\mathcal{K}g)(s)\!=\!-(CB)^{-1}\!\int^{s}_{T}\!\!\!C\Phi(t,\tau)\tilde B g(\tau)d\tau,  
		\vspace{-1mm}
	\end{equation}
	\begin{equation}
		\!g(t)\!=-(CB)^{-1}\left(Cp(t)\!+\tilde g(t)\right),\vspace{-1mm}
	\end{equation}
\begin{equation}
		\tilde g(t)=
		\int^{T}_{t^0}\!\!C\Phi(t,\tau)\tilde B\left(\gamma(\tau)-\tfrac{\rho (CB)^{-1}y(\tau)}{\|y(\tau)\|_{P}}\right)\,d\tau.\vspace{-1mm}
	\end{equation}	
	For $t\!\geq\! t^0\!+\!T$, the dynamics of 	\eqref{eq:LTI_IDE}, \eqref{eq:LTI_y}, \eqref{eq:SMC_LTI}   is governed by \vspace{-1mm}
	\begin{equation}\label{eq:dyn_Utkin_LTI_SMC}
\begin{split}
		\!\!\!\!\!\dot x(t)\!=&\,Ax(t)\!+\Pi p(t)
		\!+\!\!\int^t_{t^0} \!\!\!\!\Pi\Phi(t,\tau) \tilde B(u_{\rm eq}(\tau)\!+\!\gamma(\tau))d
			\tau,\\
			0=&\, Cx(t),
		\end{split} \vspace{-1mm}
	\end{equation}
{where $\Pi=\left(I_n\!-\!B(CB)^{-1}C\right)$ is a projector on $Cx=0$.}
\end{corollary}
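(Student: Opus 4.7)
The plan is to treat the reaching phase $t \in [t^0, T)$ and the sliding phase $t \geq T$ separately, then to invoke the general result of Corollary \ref{eq:uniq_Utkin_IDE} for uniqueness. During the reaching phase the feedback \eqref{eq:SMC_LTI} is continuous in $x$ along the trajectory (since $y(t) \neq 0$ by Theorem \ref{thm:SMC_design_LTI_IDE}), so $K[u](t,x(t)) = \{u(t,x(t))\}$ and Definition \ref{def:Utkin_sol_iODE} forces $u_{\rm eq}(t) = -\rho(CB)^{-1}y(t)/\|y(t)\|_P$.

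For the sliding phase, I would use $Cx(t) \equiv 0$ for $t \geq T$, so $C\dot x(t) = 0$ almost everywhere on $[T,+\infty)$. Multiplying the Utkin identity \eqref{eq:iODE_Utkin} (specialized to \eqref{eq:LTI_IDE}) by $C$ on the left and using the identity $CAx(t) = \Lambda Cx(t) = 0$ yields the algebraic relation
\[
CB(u_{\rm eq}(t)+\gamma(t)) + Cp(t) + \int^t_{t^0} C\Phi(t,\tau)\tilde B(u_{\rm eq}(\tau)+\gamma(\tau))\,d\tau = 0.
\]
Splitting the integral as $\int^T_{t^0} + \int^t_T$, substituting the reaching-phase expression for $u_{\rm eq}$ on $[t^0,T]$ into the first piece (this is exactly $\tilde g(t)$), left-multiplying by $(CB)^{-1}$, and setting $\tilde u_{\rm eq} = u_{\rm eq}+\gamma$ on $[T,+\infty)$ leads to the Volterra integral equation of the second kind $\tilde u_{\rm eq}(t) = g(t) + (\mathcal K \tilde u_{\rm eq})(t)$, with $g$ and $\mathcal K$ as stated.

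The operator $\mathcal K$ is bounded on $L^\infty((T,t);\mathbb R^m)$ and condition \eqref{eq:con_Phi} gives $\|\mathcal K\| \leq M < 1$ (in fact, iterating the kernel gives the sharper factorial decay exploited in the proof of Corollary \ref{eq:uniq_Utkin_IDE}), so the Neumann series $\sum_{i\geq 0}\mathcal K^i g$ converges and yields $\tilde u_{\rm eq}$; this gives the displayed formula \eqref{eq:LTI_IDE_u_eq}. For the dynamics \eqref{eq:dyn_Utkin_LTI_SMC}, I would substitute the algebraic relation back into \eqref{eq:LTI_IDE}: expressing $B(u_{\rm eq}(t)+\gamma(t))$ as $-B(CB)^{-1}Cp(t)$ minus a $B(CB)^{-1}C$-times-integral term and regrouping makes the projector $\Pi = I_n - B(CB)^{-1}C$ appear in front of both $p(t)$ and $\Phi(t,\tau)\tilde B$, which is exactly the claimed form (with the algebraic constraint $Cx(t) = 0$ maintained).

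Uniqueness of the Utkin solution is obtained by invoking part (ii) of Corollary \ref{eq:uniq_Utkin_IDE} with $s(x) = Cx$: here $a,b,\tilde a,\tilde b$ and $\partial s/\partial x = C$ are constant (hence globally Lipschitz), $G(t,x) = CB$ is nonsingular and independent of $(t,x)$, and $\tilde b = \tilde B$ is also independent of $(t,x)$, so the constancy hypothesis $G(t,y_1)=G(t,y_2)$, $\tilde b(t,y_1)=\tilde b(t,y_2)$ on $\mathcal S$ is trivial. Before $T$, the solution is the unique Carath\'eodory solution of a continuous-in-$x$ IDE (Theorem \ref{thm:uniqness_iODE}); matching this with the unique sliding-phase solution at $t = T$ gives global uniqueness. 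The main technical obstacle will be the careful bookkeeping when splitting the integral at $T$ to verify that $\tilde g$ as defined (with the reaching-phase $u_{\rm eq}$ plugged in) coincides with $\int^T_{t^0} C\Phi(t,\tau)\tilde B(u_{\rm eq}(\tau)+\gamma(\tau))\,d\tau$ and that the Neumann-series argument applies uniformly in $t$ on any finite window $[T, t]$, noting that the operator $\mathcal K$ implicitly depends on $t$ through the upper endpoint of integration; this is handled as in the proof of Corollary \ref{eq:uniq_Utkin_IDE}.
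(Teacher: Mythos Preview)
Your proposal is correct and follows essentially the same route as the paper: existence of an Utkin solution (the paper cites Theorem~\ref{thm:sol_iODE_Utkin} explicitly, you leave it implicit), finite-time reaching of $\mathcal S=\{Cx=0\}$ via Theorem~\ref{thm:SMC_design_LTI_IDE}, derivation of the Volterra equation for $\tilde u_{\rm eq}$ from $C\dot x=0$ on the sliding set, and uniqueness via Corollary~\ref{eq:uniq_Utkin_IDE}. Your write-up is more detailed than the paper's terse proof---in particular, you spell out the projector algebra behind \eqref{eq:dyn_Utkin_LTI_SMC} and verify the constancy hypotheses $G(t,y_1)=G(t,y_2)=CB$, $\tilde b(t,y_1)=\tilde b(t,y_2)=\tilde B$ needed for part~(ii) of Corollary~\ref{eq:uniq_Utkin_IDE}, which the paper leaves to the reader.
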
 
\begin{proof}
	By Theorem \ref{thm:sol_iODE_Utkin},  the system has an Utkin solution coinciding with some Filippov solution.  Since, by Theorem \ref{thm:SMC_design_LTI_IDE}, any Filippov solution belongs a sliding set for all $t\geq T$, then, by Corollary \ref{eq:uniq_Utkin_IDE}, any Utkin solution is unique   and \vspace{-1mm}
	\begin{equation}\label{eq:tilde_u_eq_LTI}
	\begin{split}
		0=&CB(u_{\rm eq}(t)+\gamma(t))+Cp(t)\\
		&+\int^t_{t^0}C\Phi(t,\tau)\tilde B
		\left(u_{\rm eq} (\tau)+\gamma(\tau)\right)d \tau,
	\end{split}\quad  t\geq T.\vspace{-1mm}
	\end{equation}
Hence, we derive \eqref{eq:dyn_Utkin_LTI_SMC}.
\end{proof}	

Under some additional conditions the dynamics  \eqref{eq:dyn_Utkin_LTI_SMC} of the system in the sliding mode can be defined by an ODE, e.g.,
	if  $\Phi(t,\tau) \tilde B\in \range(B)$  then 
$
		\Pi\Phi(t,\tau) \tilde B=\zero.$

Let $\mathrm{spec}(\cdot)$ denotes the spectrum of a matrix.
The  case of a bounded  input delay is treated by the following corollary.
\begin{corollary}\itshape 
Let there exist  $d>0$ such that\vspace{-1mm}
 \begin{equation}\label{eq:Phi_d}
		\Phi(t,\tau)=\zero \quad  \text{for all} \quad  (t,\tau)\in \Delta: t-\tau\geq d.\vspace{-1mm}
	\end{equation} 
	If, under condition of Theorem \ref{thm:SMC_design_LTI_IDE}, the set
	$\mathrm{spec(A)}\backslash\mathrm{spec}(\Lambda)$ belongs to the left complex half-plane and\vspace{-1mm}
	\begin{equation}\label{eq:p_vanishing}
		\esssup_{s\geq t} \|p(s)\|\to 0 \text{ as } t\to +\infty,\vspace{-1mm}
	\end{equation}
	then for any Utkin solution of  the closed-loop system 
	\eqref{eq:LTI_IDE}, \eqref{eq:LTI_y}, \eqref{eq:SMC_LTI}  it holds
$
		x(t)\to 0 \text{ as } t\to +\infty.
$
\end{corollary}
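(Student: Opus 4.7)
The plan is to exploit the cascade structure induced by $CA=\Lambda C$. After the finite reaching time $T$ guaranteed by Theorem~\ref{thm:SMC_design_LTI_IDE}, any Utkin solution satisfies $Cx(t)=\zero$ and obeys the sliding-mode dynamics \eqref{eq:dyn_Utkin_LTI_SMC} from Corollary~\ref{cor:Utkin_sol_LTI_SMC}. Because $CA=\Lambda C$ makes $\ker C$ invariant under $A$, I would introduce a similarity transform writing $x$ in coordinates $(y^{\top},z^{\top})^{\top}$ with $y=Cx\in\R^{m}$ and $z\in\R^{n-m}$, so that the transformed $A$ is block lower triangular with diagonal blocks $\Lambda$ and $A_{22}$ satisfying $\mathrm{spec}(A_{22})=\mathrm{spec}(A)\setminus\mathrm{spec}(\Lambda)$, hence Hurwitz. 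Since $y(t)\equiv\zero$ for $t\geq T$, the remaining component obeys
\[
\dot z(t)=A_{22}\,z(t)+q(t),\qquad t\geq T,
\]
where $q(t)$ is the $z$-block of $\Pi p(t)+\int_{t^0}^{t}\Pi\Phi(t,\tau)\tilde B\,v(\tau)\,d\tau$ with $v:=u_{\rm eq}+\gamma$. Hurwitzness of $A_{22}$ together with the converging-input-converging-state property of stable linear systems then gives $z(t)\to 0$, and hence $x(t)\to 0$, as soon as $q\in L^{\infty}$ and $q(t)\to 0$. The $p$-part of $q$ vanishes by \eqref{eq:p_vanishing}; by the bounded-delay condition \eqref{eq:Phi_d} the integral reduces for $t\geq T+d$ to $\int_{t-d}^{t}\Pi\Phi(t,\tau)\tilde B\,v(\tau)\,d\tau$, whose norm is controlled by $d\,\overline{\Phi}\,\|\Pi\tilde B\|\cdot\esssup_{\tau\in[t-d,t]}\|v(\tau)\|$. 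The entire argument therefore reduces to showing $v(t)\to 0$.

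This is the main obstacle. I would attack the Volterra identity \eqref{eq:tilde_u_eq_LTI} via the substitution $w(t):=CB\,v(t)$, chosen precisely so that \eqref{eq:con_Phi}, which bounds $C\Phi(t,\tau)\tilde B(CB)^{-1}$ rather than $(CB)^{-1}C\Phi(t,\tau)\tilde B$, directly controls the Volterra kernel. The substitution turns \eqref{eq:tilde_u_eq_LTI} into
\[
w(t)=-Cp(t)-\int_{t^0}^{t}C\Phi(t,\tau)\tilde B(CB)^{-1}\,w(\tau)\,d\tau,
\]
and, combined with the Neumann bound of Corollary~\ref{cor:Utkin_sol_LTI_SMC} on the initial window $[T,T+d]$, \eqref{eq:con_Phi} yields the uniform estimate $\|w\|_{L^{\infty}(T,\infty)}\leq(\|w\|_{L^{\infty}(T,T+d)}+\|Cp\|_{L^{\infty}})/(1-M)$. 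Setting $W(t):=\esssup_{s\geq t}\|w(s)\|_P$ and $Q(t):=\esssup_{s\geq t}\|Cp(s)\|_P$, the bounded-delay assumption \eqref{eq:Phi_d} and \eqref{eq:con_Phi} give, for $t\geq T+d$,
\[
W(t)\leq Q(t)+M\,W(t-d).
\]
Since $W$ is nonincreasing and nonnegative, $L:=\lim_{t\to\infty}W(t)$ exists in $[0,\infty)$; passing to the limit and using $Q(t)\to 0$ yields $L\leq M L$, so $M<1$ forces $L=0$. Hence $w(t)\to 0$ and thus $v(t)\to 0$, which combined with the reduction of the previous paragraph completes the argument.

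The hardest step is this vanishing of $v$: it forces the change of variables $w=CB\,v$ (to match the structural assumption \eqref{eq:con_Phi}) and it is why the bounded-delay assumption \eqref{eq:Phi_d} is essential, since it is precisely what lets the tail-supremum inequality telescope with contraction factor $M$. Without \eqref{eq:Phi_d} the Volterra integral has a growing horizon and the geometric-decay mechanism is lost, while $\gamma$ itself is only bounded (not vanishing), so no direct pointwise estimate on $v$ is available.
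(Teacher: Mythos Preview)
Your proof is correct and follows essentially the same route as the paper: reduce to the sliding dynamics \eqref{eq:dyn_Utkin_LTI_SMC}, show that $v=u_{\rm eq}+\gamma=\tilde u_{\rm eq}\to 0$ via the Volterra identity \eqref{eq:tilde_u_eq_LTI} combined with the bounded-delay contraction from \eqref{eq:con_Phi}, and then use the block structure on $\ker C$ to obtain a Hurwitz-driven $z$-dynamics with vanishing forcing. Your tail-supremum inequality $W(t)\leq Q(t)+M\,W(t-d)$ and the limit argument $L\leq ML\Rightarrow L=0$ in fact spell out in full detail the one implication (``\eqref{eq:p_vanishing} $\Rightarrow \esssup_{s\geq t}\|\tilde u_{\rm eq}(s)\|\to 0$'') that the paper simply asserts.
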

\begin{proof}
	Since $\Phi(t,\tau)=0$ for all $t\geq T+d$ and all $\tau\in [t^0,T]$ then, for all $ t\geq T+d$, it holds $Cx(t)=0$ and\vspace{-1mm}
	\begin{equation}
\begin{split}
	\!\!\!\!\!\dot x(t)\!=&\,Ax(t)\!+\Pi p(t)
	\!+\!\Pi\!\int^t_{t-d} \!\!\!\!\Phi(t,\tau) \tilde B\tilde u_{\rm eq}(\tau)d
	\tau,\\
	0=&\,CB\tilde u_{\rm eq}(t)+Cp(t)+\int^t_{t-d}C\Phi(t,\tau)\tilde B
	\tilde u_{\rm eq} (\tau)d \tau,\vspace{-1mm}
\end{split}
\end{equation}
Hence, the condition \eqref{eq:p_vanishing} implies that $\esssup_{s\geq t}\|\tilde u_{\rm eq}(t)\|\to 0$ as $t\to +\infty$ and $$\esssup_{s\geq t}\left\|\int^t_{t-d} \Phi(t,\tau) \tilde B\tilde u_{\rm eq}(\tau)d
\tau\right\|\to 0 \text{
as }t\to +\infty.$$
Since the sliding surface $Cx=0$ is an eigenspace of $A$  then there exists $C^{\top}\in \R^{n-m\times n}$ such that $C^{\top}C^{\bot}=0$, $\mathrm{rank}{C^{\bot}}=n-m$ and $AC^{\bot}=\tilde \Lambda C^{\bot}$, where $\tilde \Lambda\in \R^{(n-m) \times (n-m)}: \mathrm{spec}(\tilde \Lambda)=\mathrm{spec(A)}\backslash\mathrm{spec}(\Lambda)$. Hence,  $z(t)=C^{\bot}x(t)$ satisfies\vspace{-1mm}
\[
\dot z(t)=\tilde \Lambda z(t)+C^{\bot}\Pi p(t)
\!+\!C^{\bot}\Pi\!\int^t_{t-d} \!\!\!\!\Phi(t,\tau) \tilde B\tilde u_{\rm eq}(\tau)d
\tau,\vspace{-1mm}
\]
for all $ t\geq T+d$.
Since, by assumption, the matrix $\tilde \Lambda$ is Hurwitz,  then
$z(t)=C^{\bot}x(t)\to 0$ as $t\to+\infty$.
\end{proof}

\begin{example}\itshape
	Let us consider the system \eqref{eq:LTI_IDE}, \eqref{eq:LTI_y}, \eqref{eq:SMC_LTI} with 
	\[
	A=\left(
	\begin{smallmatrix}
		-2 & 4 & 2\\
		0 & -3 & 1\\
		-1 & 2 & 1
	\end{smallmatrix}
	\right), \quad B=\tilde B=\left(\begin{smallmatrix}
		0\\
		0\\
		1
	\end{smallmatrix}
	\right),  \quad  C=\left(
	\begin{smallmatrix}
		1 & 0 & -2
	\end{smallmatrix}
	\right),
	\]
	$p=\zero$ and $\Phi(t,\tau)=\xi(t-\tau)I_n$, where $\xi(t)=1$ for $0\leq t-\tau< 1$ and $\xi(t)=0$ for $t-\tau \geq 1$. In this case, the condition \eqref{eq:con_Phi} is fulfilled with $M=1$ and $|CB|=2$. 
	Therefore, by Theorem \ref{thm:SMC_design_LTI_IDE},
	the SMC  \eqref{eq:SMC_LTI} with	 $ \rho>6 \|\gamma\|_{L^{\infty}}$
	stabilizes the output $y=Cx$ of the system \eqref{eq:LTI_IDE} to zero in a finite time.
	The results of numerical simulations for $\rho=4$, $\gamma(t)=0.5\cos(2t)$ and $t^0=0$ are depicted on Figures 
	\ref{fig:1}, \ref{fig:2} and \ref{fig:3}. The simulations have been done by means of explicit Euler method \eqref{eq:explicit_Euler_IDE}
	with the sampling period $h=0.001$. The integral is computed using the method of rectangles using the same sampling period.
	The applicability of this method  is discussed in Remark \ref{rem:1}.
	
	To  compare the SMC for IDE with the SMC for ODE , we also present the simulation results of the system \eqref{eq:LTI_IDE}-\eqref{eq:SMC_LTI}  with $\Phi=\zero$ (please see Figures \ref{fig:5}, \ref{fig:6} and \ref{fig:7}). The simulations show a faster convergence of the ODE ($\Phi=\zero$) system to zero. 
	
	The control signal depicted on Figure \ref{fig:3} is the typical signal generated by SMC, which has a chattering during  the sliding mode (after the time instant $t=T\approx 0.55$). The chattering is caused by the use of the  explicit method, which correctly approximates the dynamics of the SMC system only away from the sliding surface (see Remark \ref{rem:1} or \cite{AcaryBrogliato2010:SCL}).
	Recall that (see, e.g., \cite{Utkin1992:Book}), in practice, the equivalent control can be estimated by filtering the discontinuous input signal $u(t)$:\vspace{-1mm}
	\begin{equation}\label{eq:filter}
		\epsilon\dot u_{\epsilon}(t)=-u_{\epsilon}(t)+u(t), \quad t>t^0, \quad u_{\epsilon}(t^0)=0, \vspace{-1mm}
	\end{equation} 
	where $\epsilon>0$ is a small enough constant parameter.  For $t\geq t^0+T$, the SMC engineering   considers
	$u_{\epsilon}(t)\approx u_{\rm eq}(t)$.
	In the view of \eqref{eq:tilde_u_eq_LTI}, the function $\delta: [t^0, +\infty)\to \R$ given by\vspace{-1mm}
	\begin{equation*}
		\delta(t)=CB(u_{\epsilon}(t)+\gamma(t))+\int^t_{t^0}C\Phi(t,\tau) \tilde B(u(\tau)+\gamma(\tau)) d\tau\vspace{-1mm}
	\end{equation*}
	can be considered as a practical sliding mode indicator. 
		Indeed,  if   the Utkin solution  is an adequate description of a motion of  the considered discontinuous control system  and 
		the explicit Euler method gives a precise enough approximate of Utkin solution of IDE then the approximate identity  $\sigma(t)\approx 0$   may  be treated  as an indication  of a practical sliding mode for $t\geq T$. The evolution of the signal $\delta$ computed for $\epsilon=0.01$ is depicted on Figure \ref{fig:4}. The numerical computation of $\delta$ has been done with the sampling period $h=0.0001$. The obtained $\delta(t)$ is, indeed, close to zero for $t\geq 0.6$. This may be interpreted as some heuristic confirmation of the validity  of Utkin solutions for description of a motion of discontinuous SMC control systems modeled by IDEs.     A small delay in detection of SMC is  caused by an inertia  of the linear filter \eqref{eq:filter} and by  numerical errors of the explicit Euler method. Development of a rigorous numerical analysis for IDEs with SMC is the important problem for the future research. 
			\begin{figure}
				\vspace{-2mm}
		\centering
		\includegraphics[height=43mm,width=0.37\textwidth]{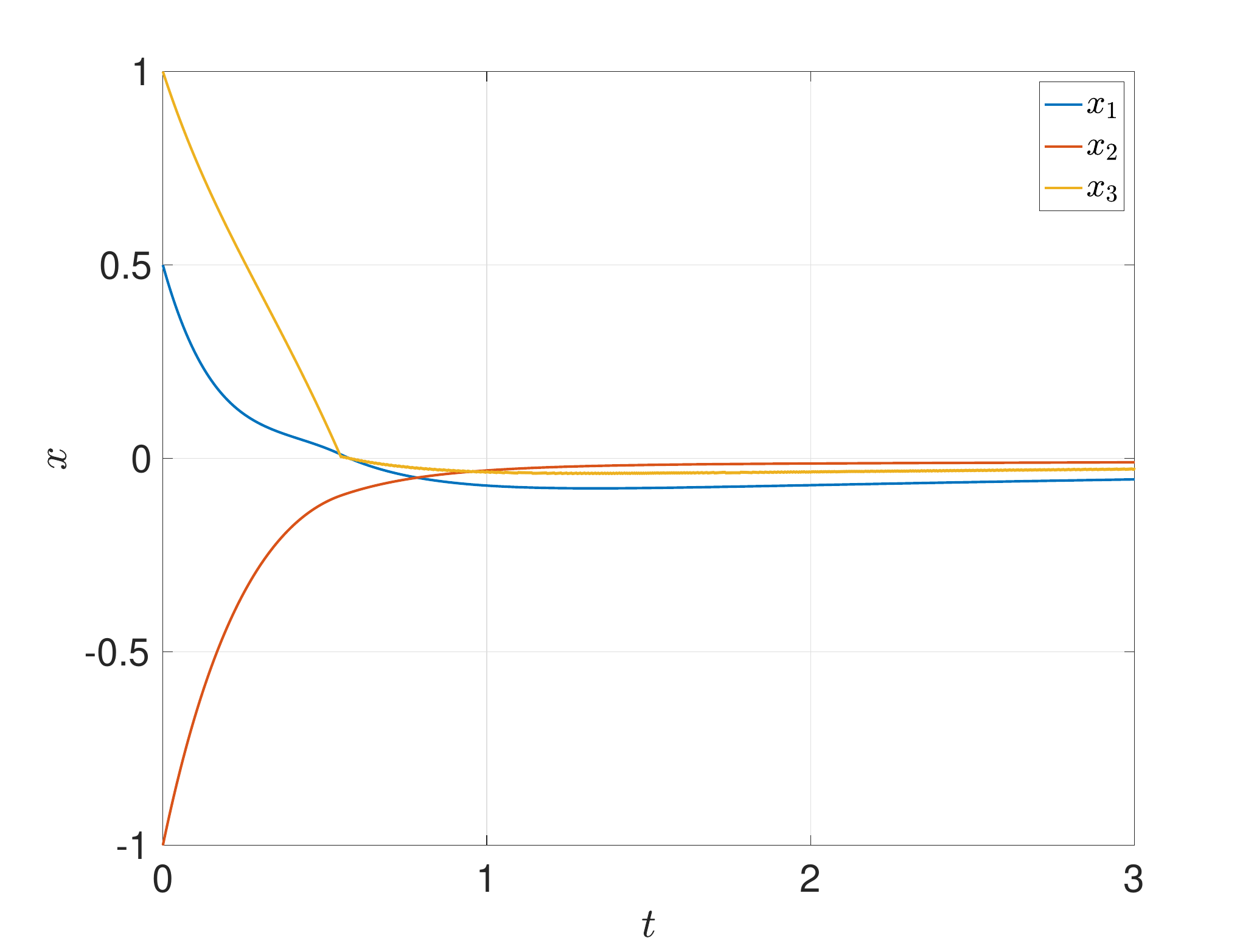}\vspace{-4mm}
		\caption{The states of the closed-loop system \eqref{eq:LTI_IDE}-\eqref{eq:SMC_LTI}}
		\label{fig:1}
	\end{figure}
	\begin{figure}
		\vspace{-2mm}
		\centering
		\includegraphics[height=43mm,width=0.37\textwidth]{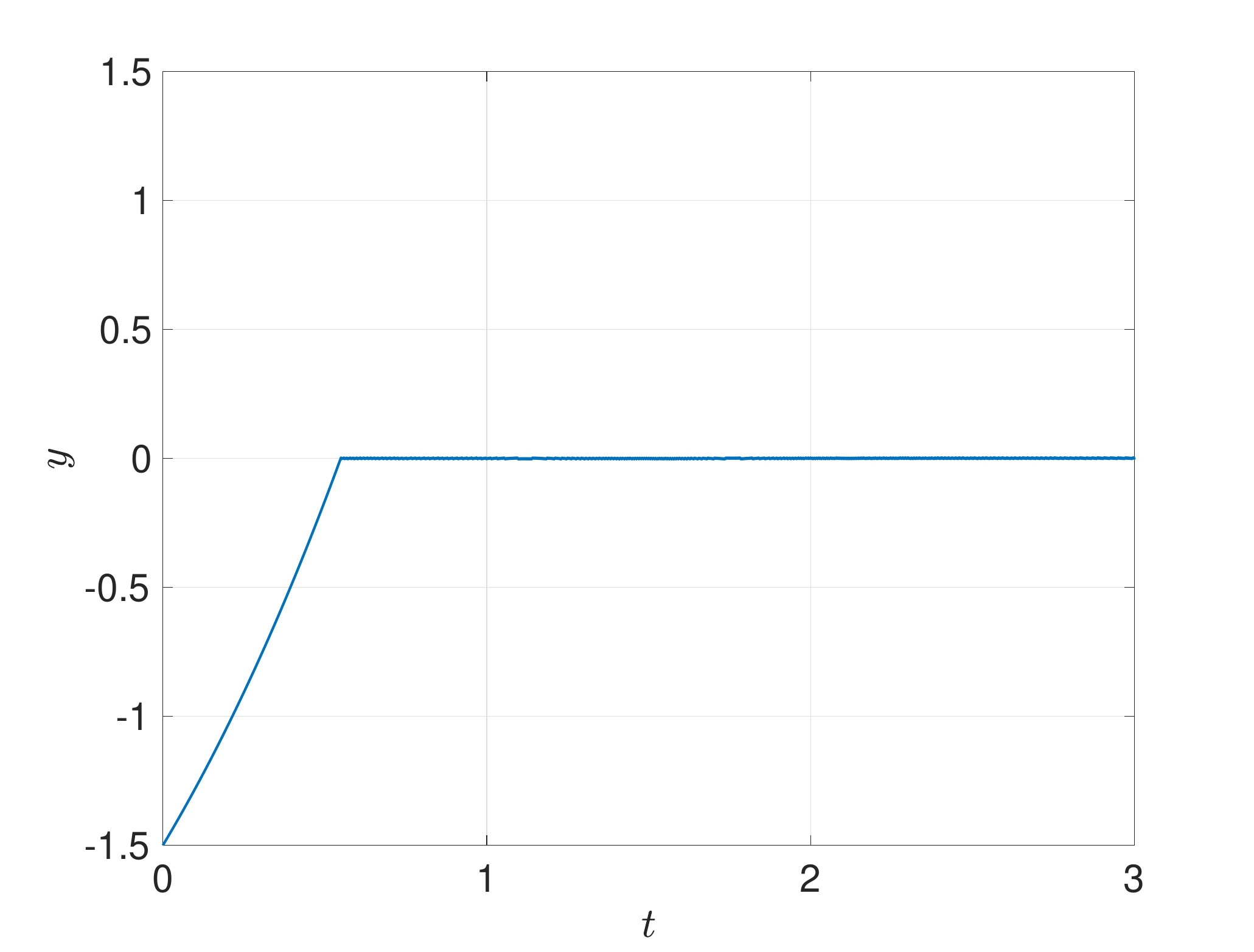}\vspace{-4mm}
		\caption{The output of the closed-loop  system \eqref{eq:LTI_IDE}-\eqref{eq:SMC_LTI}}
		\label{fig:2}\vspace{-5mm}
	\end{figure}
	\begin{figure}
		\vspace{-2mm}
			\centering
		\includegraphics[height=43mm,width=0.37\textwidth]{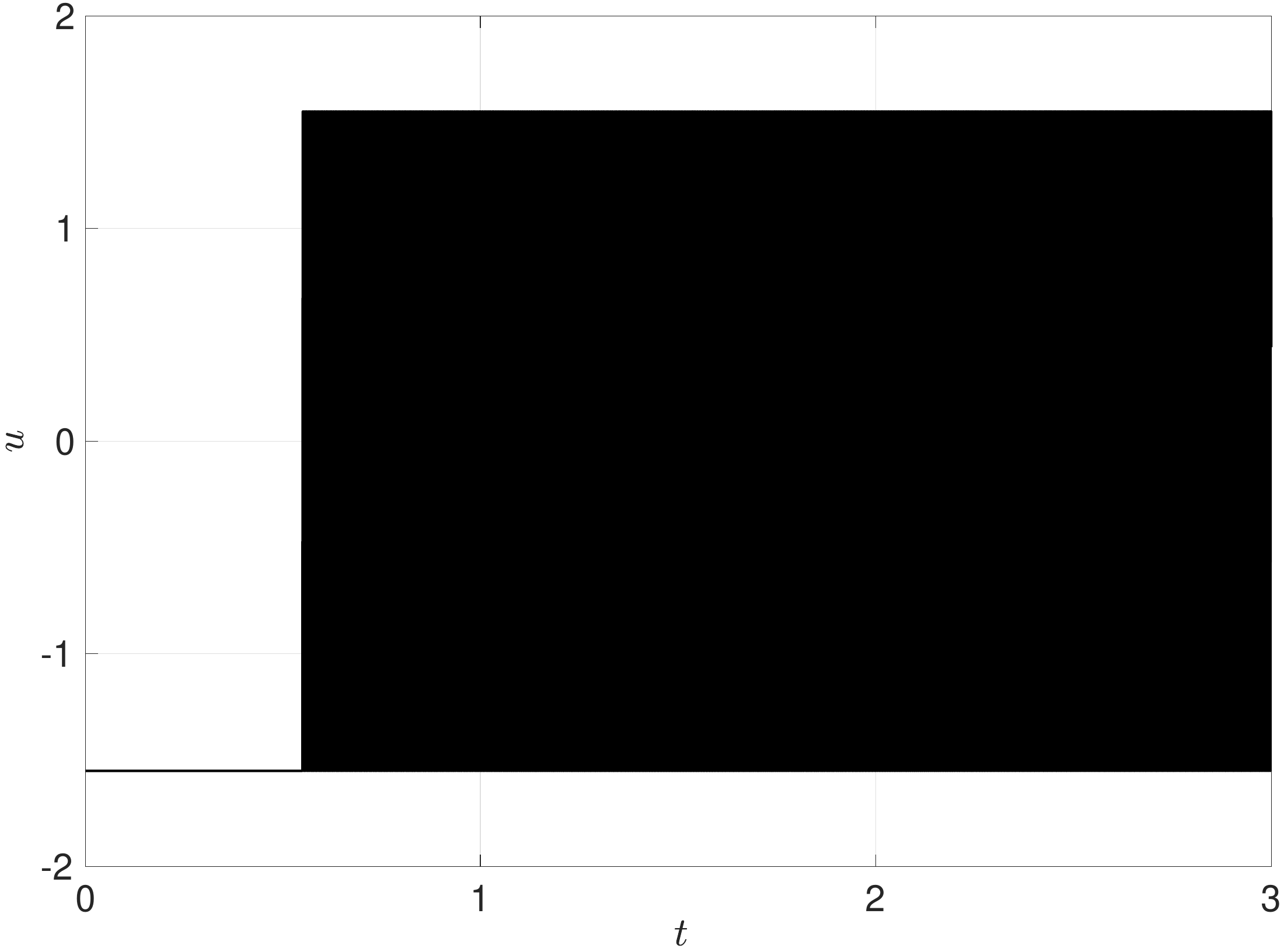}\vspace{-4mm}
		\caption{The control input of the closed-loop system \eqref{eq:LTI_IDE}-\eqref{eq:SMC_LTI}}
		\label{fig:3}
	\end{figure}
\end{example}

 \subsection{SMC for dynamical systems in Banach spaces}
 {Let us consider a control system modeled by a differential equation in a Banach space \cite{DaleckyKrein1974:Book}, \cite{Yosida1980:Book}, \cite{Pazy1983:Book}\vspace{-1mm}
 	\begin{equation}\label{eq:EE}
 		\dot x(t)\!=\!\!Ax(t)\!+\!B(u(t)+\gamma(t)), \,t\!>\!t_0, \, x(t_0)\!=\!x^0\!\!\in\! \mathcal{D}(A), \vspace{-1mm}
 	\end{equation}
 	where $x(t)\in \B$ is the system state at time $t$, $A: \mathcal{D}(A)\subset \B\to \B$ is a linear closed densely defined (possibly) unbounded operator, which generates a strongly continuous semigroup $\Psi(s):\B\to \B$, $s\geq 0$ of linear bounded operators  on a \textit{real Banach space} $\B$, $B:\R^m\to \B$ is a linear bounded operator, $u:\R\to \R^m$ is the finite-dimensional control input, $\gamma\in L^{\infty}(\R;\R^m)$ is the matched perturbation   and $\mathcal{D}(A)$ is the domain of $A$. 
 	Let the output of the system be defined as \vspace{-1mm}
 	\begin{equation}\label{eq:EE_output}
 		y(t)=Cx(t)\in \R^k,\vspace{-1mm}
 	\end{equation}
 	where $C: \B\to \R^k$ is a linear bounded operator. 
 	For any $u\in L^1_{\rm loc}([t^0,+\infty);\R^m)$, the open-loop systems \eqref{eq:EE} is well-posed and has the so-called \textit{mild solution} 
 	\cite[page 106]{Pazy1983:Book}\vspace{-1mm}
 	\begin{equation}\label{eq:EE_solution}
 		x(t)=\Psi(t-t^0)x^0+\int^t_{t^0}\Psi(t-\tau)B(u(\tau)+\gamma(\tau)) d\tau,\vspace{-1mm}
 	\end{equation}
 	where the integral is understood in the sense of Bochner \cite{Yosida1980:Book}.
 	The mild solution $x: [t^0,t^0+T)\to \B$ with $0<T\leq +\infty$ is the  \textit{classical} (resp., \textit{strong}  \cite[page 109]{Pazy1983:Book}) solution of \eqref{eq:EE}  if $x(t)\!\in\! \D(A)$ satisfies \eqref{eq:EE} for (almost) all $t\in [t^0,t^0+T)$.
 	
 				\begin{figure}
 		\vspace{-2mm}
 		\centering
 		\includegraphics[height=43mm,width=0.37\textwidth]{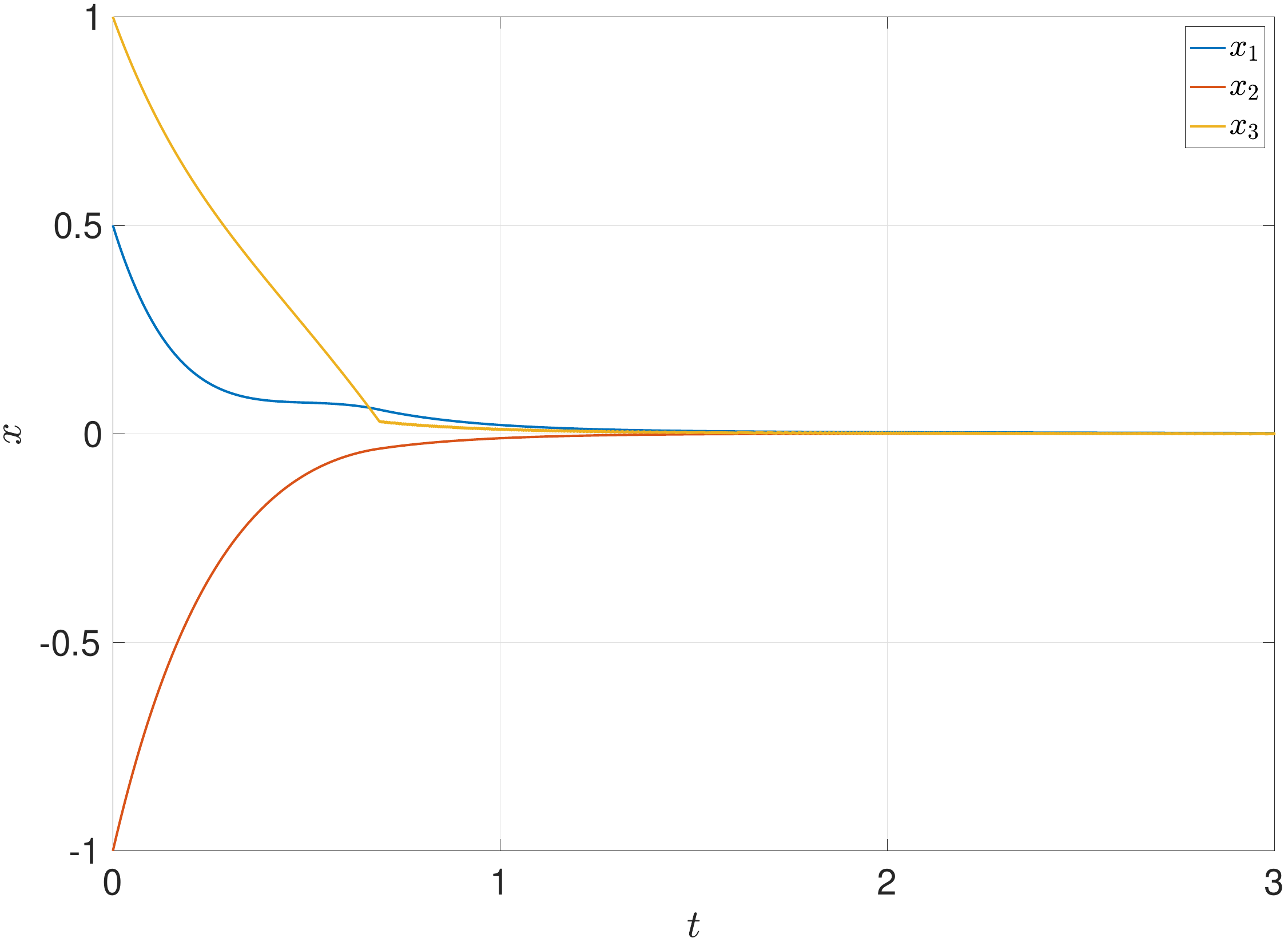}\vspace{-4mm}
 		\caption{The states of the closed-loop system \eqref{eq:LTI_IDE}-\eqref{eq:SMC_LTI} with $\Phi=\zero$}
 		\label{fig:5}
 	\end{figure}
 	\begin{figure}
 		\vspace{-2mm}
 		\centering
 		\includegraphics[height=43mm,width=0.37\textwidth]{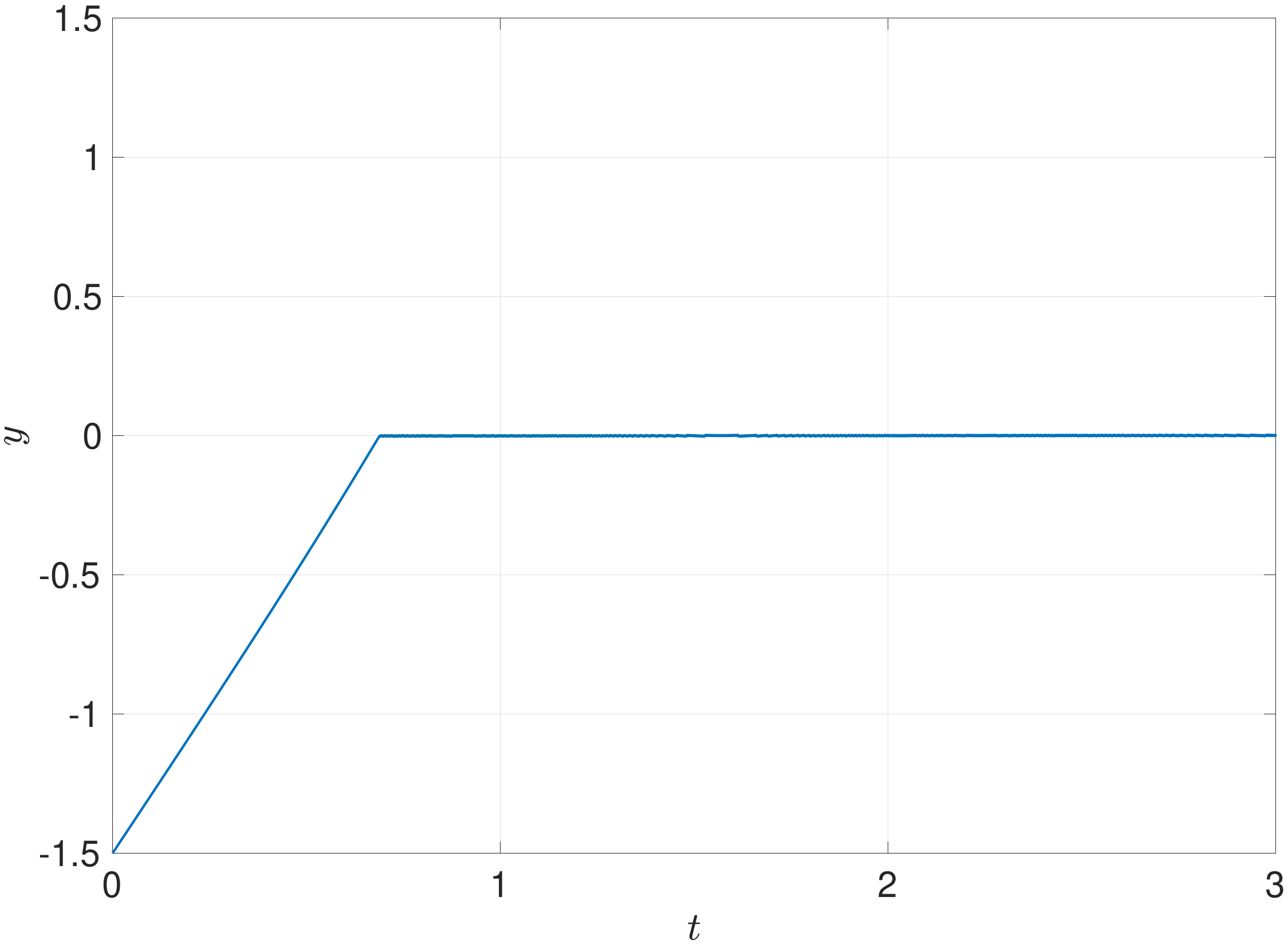}\vspace{-4mm}
 		\caption{The output of the closed-loop  system \eqref{eq:LTI_IDE}-\eqref{eq:SMC_LTI} with $\Phi=\zero$}
 		\label{fig:6}
 	\end{figure}
 	\begin{figure}
 		\centering
 		\includegraphics[height=43mm,width=0.37\textwidth]{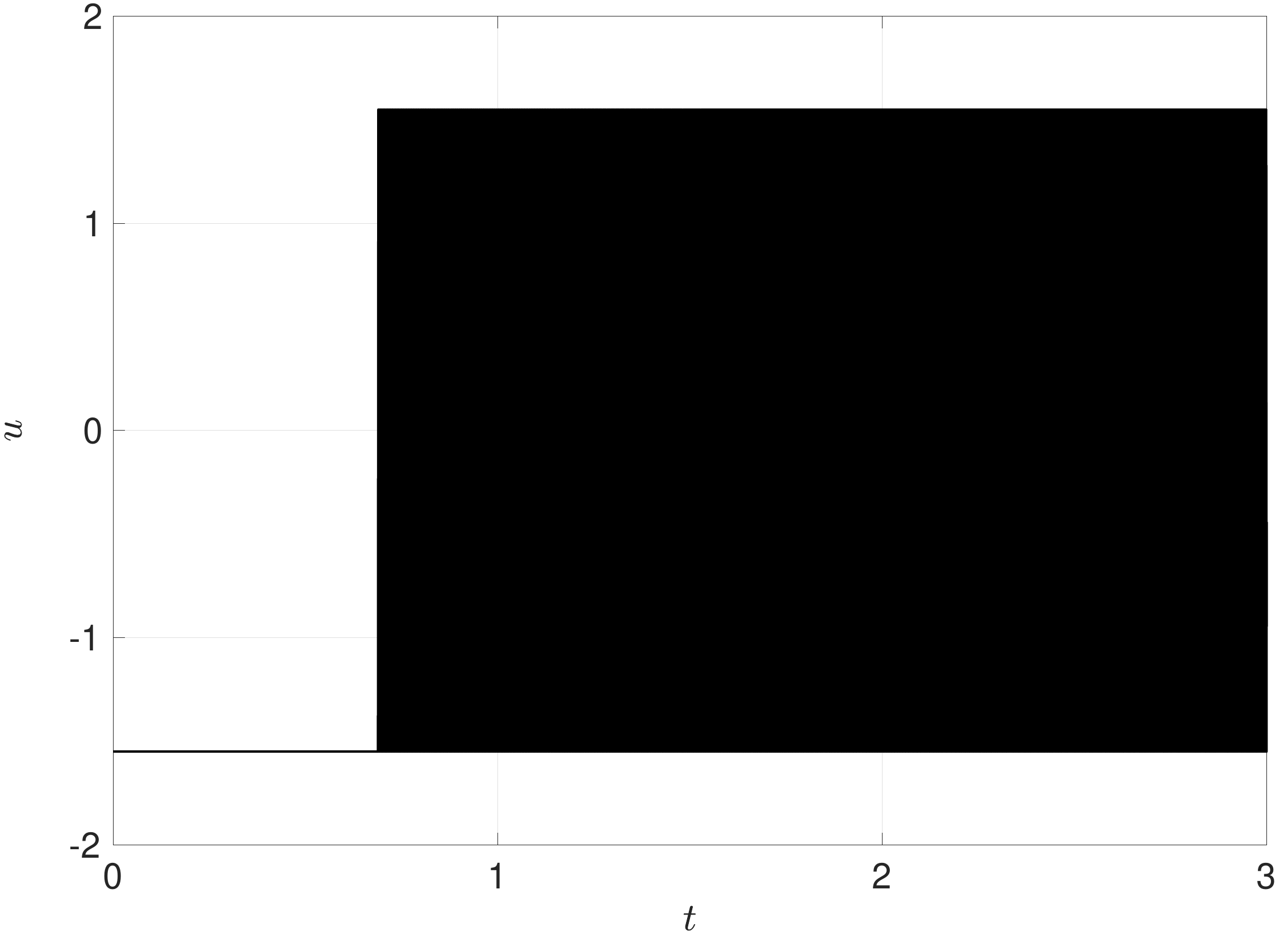}\vspace{-4mm}
 		\caption{The control input of the closed-loop system \eqref{eq:LTI_IDE}-\eqref{eq:SMC_LTI} with $\Phi=\zero$}
 		\label{fig:7}
 	\end{figure}
 	
 	Let  a feedback law be  defined as follows\vspace{-1mm}
 	\begin{equation}\label{eq:EE_control}
 		u(t)=\tilde u(t,y(t)),\quad \tilde u: [t^0,+\infty)\times \R^k \to \R^{m},\vspace{-1mm}
 	\end{equation}
 	where $\tilde u$ is a measurable function.
 	Even if a  function  $\tilde u$ is continuous in $y$, the non-linearity in the right-hand side of the closed-loop system may not 
 	be locally Lipschitz continuous, so, the conventional theory of nonlinear abstract differential equations
 	(see, e.g., \cite{Pazy1983:Book}, \cite{EngelNagel2000:Book}) is not applicable for its analysis.
 	In the case of SMC design \cite{Orlov_elal2004:IJC}, the closed-loop system  \eqref{eq:EE}, \eqref{eq:EE_output}, \eqref{eq:EE_control} is ill-posed.  Let us regularize  the system \eqref{eq:EE}, \eqref{eq:EE_output} following the  concept of Utkin solutions \cite{Orlov2008:Book}, \cite{PolyakovFridman2014:JFI}.
 	\begin{definition}\label{def:Fillipov_solution_EE}\label{def:Filippov_sol_EE}\itshape 
 		A continuous function $x:[t_0,t^0+T) \to \B$	with $x(t^0)\!=\!x^0$ and $0<T\leq+\infty$ is said to be an Utkin solution of the first (resp., second) kind for the system  \eqref{eq:EE}, \eqref{eq:EE_output}, \eqref{eq:EE_control} if 
 			 $x$ satisfies \eqref{eq:EE_solution} with $u=u_{\rm eq}\in L^1((t_0,t^0+T);\R^m)$ such that
 		$u_{\rm eq}(t)\in U(t,Cx(t))$ for almost all $t\!\in\! (t_0,t^0+T)$, where $U\!=\!K[\tilde u]$  (resp., $U=(K[\tilde u_1], ..., K[\tilde u_m])^{\top}$ for $\tilde u=(\tilde u_1, ...,\tilde u_m)^{\top}$).
 	\end{definition}
In other words, an Utkin solution $x$ of   \eqref{eq:EE}, \eqref{eq:EE_output}, \eqref{eq:EE_control}  is  a mild solution  with a properly selected $u(t)\!\in\! U(t,Cx(t))$.

If 
 $\range(B)\!\subset\! \mathcal{D}(A)$, then,
 by \cite[Theorem 2.9, Chapter 4]{Pazy1983:Book},
 any mild solution is a strong  solution. By  \cite[Theorem 2.4, Chapter 1]{Pazy1983:Book},
 the function \vspace{-1mm}
 $$
 s\in [0,+\infty)\mapsto \Phi(s):=C\Psi(s) AB\in  \R^{k\times m}
 \vspace{-1mm}
 $$ is \textit{continuous}. 
 Moreover, taking into account $\frac{d\Psi(s)B}{ds}=\Psi(s)AB$ and $\frac{d\Psi(s)x^0}{ds}=\Psi(s)Ax^0, \forall s> 0,\forall x^0\in \D(A)$  (see, \cite[Theorem 2.4, Chapter 1]{Pazy1983:Book}), we derive \vspace{-1mm}
\[\small
\begin{split}
	\tfrac{d}{dt}y(t)=\tfrac{d}{dt}Cx(t)=&\frac{d}{dt}C\Psi(t-t)x^0\\
	&+\frac{d}{dt}\int^t_{t^0}\underbrace{C\Psi(t-\tau)B}_{\in \R^{k\times m}}(u(\tau)+\gamma(\tau))d\tau\\
	=&C\Psi(t-t)Ax^0+CB(u(t)+\gamma(t))\\
	&+\int^t_{t^0}\underbrace{C\Psi(t-\tau)AB}_{\Phi(t-\tau)}(u(\tau)+\gamma(\tau))d\tau
\end{split}
\] 
almost everywhere. Therefore,
 the input-output dynamics of the system \eqref{eq:EE}, \eqref{eq:EE_output}  is given by the IDE\vspace{-1mm}
 \begin{equation}\label{eq:EE_IDE}
 	\dot y(t)=p(t-t^0)+CB(u(t)+\gamma(t))+\int^t_{\sigma} \!\!\Phi(t,\tau)(u(\tau)+\gamma(\tau))d\tau, \vspace{-1mm}
 \end{equation}
 where $t> t^0$, $CB\in \R^{k\times m}$  and \vspace{-1mm}
 \begin{equation}\label{eq:x_to_y}
 	y(t^0)\!=\!Cx^0,\quad p(t-t^0)\!=\!C\Psi(t-t^0)Ax^0.\vspace{-1mm}
 \end{equation}
Notice that the function $s\mapsto p(s)$ is  \textit{continuous} if $x_0\in \D(A)$. 
	\begin{figure}
	\vspace{-2mm}
	\centering
	\includegraphics[height=43mm,width=0.37\textwidth]{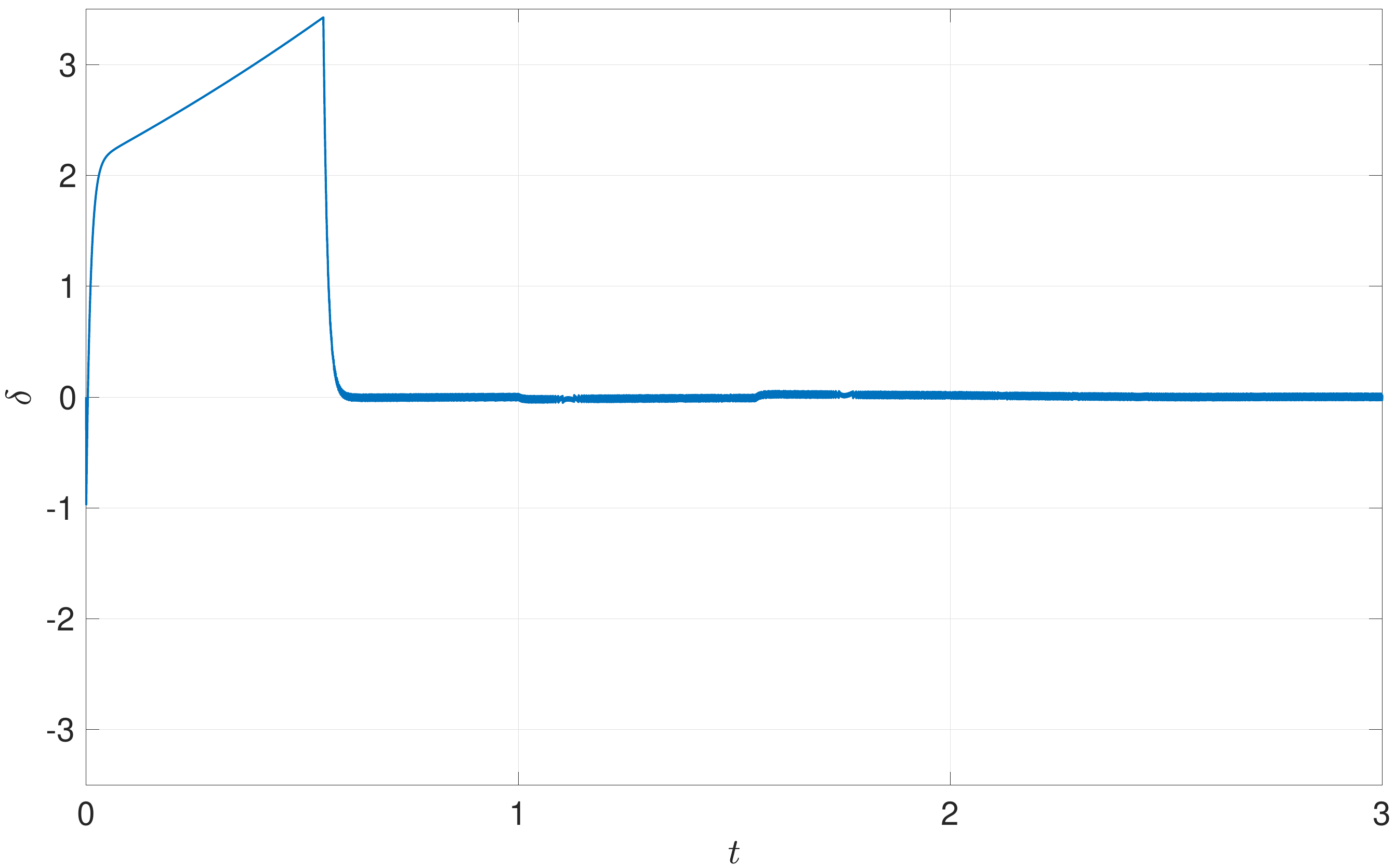}\vspace{-4mm}
	\caption{The sliding mode  indicator $\delta$ of the closed-loop system \eqref{eq:LTI_IDE}- \eqref{eq:SMC_LTI}}
	\label{fig:4}
\end{figure}

Let  $y_{\rm eq}$ be an Utkin solution of the IDE \eqref{eq:EE_IDE},  \eqref{eq:x_to_y} with the feedback law \eqref{eq:EE_control},
and let $u_{\rm eq}$ be the corresponding equivalent control. By Theorem \ref{thm:sol_iODE_Utkin}, the pair $(y_{\rm},u_{\rm eq})$  always exists if $\tilde u$ is a locally essentially bounded measurable function.  If $x_{\rm eq}$ is a mild solution of \eqref{eq:EE} given by \eqref{eq:EE_solution} with $u\!=\!u_{\rm eq}$, then\vspace{-1mm}
\[\small
\begin{split}
\tfrac{d}{dt}Cx_{\rm eq}(t)=&C\Psi(t-t)Ax^0+CB(u_{\rm eq}(t)+\gamma(t))\\
&+\int^t_{\sigma} \!\!\Phi(t-\tau)(u_{\rm eq}(\tau)+\gamma(\tau))d\tau,
\end{split}\vspace{-1mm}
\]
i.e., $y_{\rm eq}(t)=Cx_{\rm eq}(t)$ and $u_{\rm eq}(t)\in U(t,Cx(t))$ almost everywhere. By Definition \ref{def:Filippov_sol_EE}, this means,  that the function $x_{\rm eq}$, constructed by solving the IDE \eqref{eq:EE_IDE}, is an Utkin solution of the nonlinear (possibly discontinuous) differential equation \eqref{eq:EE}, \eqref{eq:EE_control} in a Banach space. As a trivial corollary of Theorem \ref{thm:sol_iODE_Utkin} (resp., Theorem \ref{thm:Caratheodory_iODE}), we derive the  following result.
 	\begin{corollary}\label{cor:Utkin_EE}\itshape
 		If, $\range(B)\subset \mathcal{D}(A)$ and $\tilde u$ satisfies    Filippov  Condition, then, for any $x_0\in \mathcal{D}(A)$,  the system \eqref{eq:EE}, \eqref{eq:EE_output}, \eqref{eq:EE_control} has an Utkin solution  solution $x :[t^0,t^0+T)\to \D(A)$, which is defined, at least, locally in time and  satisfies  the differential equation \eqref{eq:EE} with $u=u_{\rm eq}$ almost everywhere, where $0< T\leq +\infty$. If, additionally, $\Phi\in L^{\infty}((0,+\infty);\R^{k\times m})$  and 
 		$\esssup_{t\in \R,y\in \R^k}\|\tilde u(t,y)\|<+\infty$, then $T=+\infty$.
 	\end{corollary}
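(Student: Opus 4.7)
The plan is to reduce the well-posedness question for the abstract system \eqref{eq:EE}, \eqref{eq:EE_output}, \eqref{eq:EE_control} to the finite-dimensional Utkin theory of Section~\ref{sec:IDE} applied to the input-output IDE \eqref{eq:EE_IDE}, and then to reconstruct a $\B$-valued trajectory from the resulting equivalent control via the mild-solution formula \eqref{eq:EE_solution}. The key observation, already exploited in the paragraph preceding the corollary, is that $Cx$ satisfies \eqref{eq:EE_IDE} whenever $x$ is a strong solution of \eqref{eq:EE}, so existence in $\B$ can be obtained \emph{backwards} from existence for \eqref{eq:EE_IDE}.

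First, I would apply Theorem~\ref{thm:sol_iODE_Utkin} to \eqref{eq:EE_IDE}, which is affine-in-$u$ with coefficients that are constant in $y$ and continuous in $(t,\tau)$ (here $p$ is continuous because $x^0\in\D(A)$, and $\Phi$ is continuous because $\range(B)\subset\D(A)$); the Filippov Condition on $\tilde u$ --- which is what step~2 of the proof of Theorem~\ref{thm:sol_iODE_Utkin} actually uses --- supplies a local-in-time Utkin solution $y_{\rm eq}:[t^0,t^0+T)\to\R^{k}$ together with a measurable equivalent control $u_{\rm eq}(t)\in U(t,y_{\rm eq}(t))$ satisfying \eqref{eq:EE_IDE} a.e. Second, I would lift $(y_{\rm eq},u_{\rm eq})$ to $\B$ by setting
\begin{equation*}
x_{\rm eq}(t):=\Psi(t-t^0)x^0+\int_{t^0}^{t}\Psi(t-\tau)B\left(u_{\rm eq}(\tau)+\gamma(\tau)\right)d\tau;
\end{equation*}
since $x^0\in\D(A)$, $\range(B)\subset\D(A)$ and $u_{\rm eq}+\gamma$ is locally integrable, Pazy's strong-solution theorem \cite[Thm.~2.9, Ch.~4]{Pazy1983:Book} forces $x_{\rm eq}(t)\in\D(A)$ for all $t\in[t^0,t^0+T)$ and makes $x_{\rm eq}$ satisfy \eqref{eq:EE} with $u=u_{\rm eq}$ almost everywhere. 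Third, I would use the differentiation identity for $\tfrac{d}{dt}Cx_{\rm eq}(t)$ --- the same calculation carried out just above the corollary --- to show that $Cx_{\rm eq}$ is a Carath\'{e}odory solution of the \emph{linear} (in $y$) Volterra IDE \eqref{eq:EE_IDE} driven by the same $u_{\rm eq}$ and starting from the same initial value $Cx^0$. Uniqueness for such a linear IDE (Theorem~\ref{thm:uniqness_iODE} applied after freezing $u_{\rm eq}$) then yields $Cx_{\rm eq}\equiv y_{\rm eq}$, so $u_{\rm eq}(t)\in U(t,Cx_{\rm eq}(t))$ almost everywhere, which is precisely the requirement of Definition~\ref{def:Filippov_sol_EE}.

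For the global-existence addendum, under $\esssup_{t,y}\|\tilde u(t,y)\|<+\infty$ and $\Phi\in L^{\infty}((0,+\infty);\R^{k\times m})$, the equivalent control $u_{\rm eq}$ is uniformly bounded, and combined with the standard exponential estimate $\|\Psi(s)\|\leq Me^{\omega s}$ this makes the right-hand side of \eqref{eq:EE_IDE} bounded by an $N$-dependent constant on every finite interval $[t^0,t^0+N]$; hence $y_{\rm eq}$ cannot blow up in finite time, its maximal existence interval is $[t^0,+\infty)$, and the mild-solution formula extends $x_{\rm eq}$ to the same interval. The only genuinely delicate point is the consistency check of the third step: one must ensure that the IDE solution $y_{\rm eq}$ and the output $Cx_{\rm eq}$ of the lifted mild solution really coincide, so that the set-valued selection $u_{\rm eq}(t)\in U(t,\cdot)$ transfers cleanly from $y_{\rm eq}$ to $Cx_{\rm eq}$. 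The explicit identity for $\tfrac{d}{dt}Cx_{\rm eq}$ recalled just above the corollary makes this identification essentially automatic, reducing the remainder of the argument to assembling Theorem~\ref{thm:sol_iODE_Utkin}, Pazy's strong-solution theorem, and the uniqueness of linear Volterra equations.
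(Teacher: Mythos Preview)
Your proposal is correct and follows essentially the same route as the paper, which presents the corollary as an immediate consequence of Theorem~\ref{thm:sol_iODE_Utkin}: obtain $(y_{\rm eq},u_{\rm eq})$ from the finite-dimensional IDE, lift via the mild-solution formula, and verify $Cx_{\rm eq}=y_{\rm eq}$ through the differentiation identity already displayed just above the statement. The only notable difference is that you invoke Theorem~\ref{thm:uniqness_iODE} for the identification $Cx_{\rm eq}\equiv y_{\rm eq}$, whereas the paper treats it as automatic; in fact, once $u_{\rm eq}$ is frozen, the right-hand side of \eqref{eq:EE_IDE} is independent of $y$, so both $y_{\rm eq}$ and $Cx_{\rm eq}$ are obtained by integrating the \emph{same} function from the same initial value $Cx^0$, and no uniqueness theorem is needed at all.
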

The uniqueness of Utkin solution of the IDE \eqref{eq:EE_IDE}, \eqref{eq:EE_control}
 implies the uniqueness of Utkin solution of the system \eqref{eq:EE}, \eqref{eq:EE_output}, \eqref{eq:EE_control}.
 	\begin{corollary}\itshape 
 		Let $\range(B)\subset \mathcal{D}(A^2)$ and $\tilde u$ satisfy Filippov (resp., Carath\'eodory) Condition. If Utkin  solution $x(t,t^0,x^0)$ of  \eqref{eq:EE}, \eqref{eq:EE_output}, \eqref{eq:EE_control} is  unique  on $\mathcal{I}$ then it depends continuously on initial data: $x(t,t^1,x^1)\to x(t,t^0,x^0)$ as $|t^1-t^0|+\|x^1-x^0\|_{\B}+\|Ax^1-Ax^0\|_{\B}\to 0$ unformly on compacts from $\mathcal{I}$.
 	\end{corollary}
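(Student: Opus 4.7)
The plan is to reduce the statement to the scalar output IDE \eqref{eq:EE_IDE} and invoke Theorem \ref{thm:continuity_iODE} on continuous dependence for IDEs, then lift the resulting uniform convergence of $y_\mu=Cx_\mu$ back to $\B$-convergence of $x_\mu$ via the mild-solution formula \eqref{eq:EE_solution}.

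First I would check the hypotheses of Theorem \ref{thm:continuity_iODE} for the family of IDEs \eqref{eq:EE_IDE} parametrized by $\mu=(t^0_\mu,x^0_\mu)$. The assumption $\range(B)\subset\mathcal{D}(A^{2})$ ensures $AB\in\mathcal{D}(A)$, so by \cite[Theorem 2.4, Chapter 1]{Pazy1983:Book} the kernel $s\mapsto\Phi(s)=C\Psi(s)AB$ is of class $C^{1}$ and locally bounded on $\Delta$, which gives Assumption \ref{as:ex_IDE}; since $x^0\in\mathcal{D}(A)$, the forcing $s\mapsto p(s)=C\Psi(s)Ax^0$ is continuous on $[0,+\infty)$. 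A perturbation $(t^0,x^0)\leadsto(t^1,x^1)$ with $x^1\in\mathcal{D}(A)$ affects \eqref{eq:EE_IDE} only through the forcing and the initial value, and a direct estimate
\[
\sup_{t\in[t^0,t^0+T]}\|p(t-t^1)-p(t-t^0)\|\leq \|C\|\sup_{s\in[0,T]}\|\Psi(s)\|\cdot\|Ax^1-Ax^0\|_{\B}+\omega(|t^1-t^0|),
\]
with $\omega$ the modulus of continuity of $s\mapsto C\Psi(s)Ax^0$ on $[0,T]$, produces the $L^1$-majorant $m_\mu$ in \eqref{eq:set_E} with $\|m_\mu\|_{L^1}\to 0$ as $|t^1-t^0|+\|x^1-x^0\|_{\B}+\|Ax^1-Ax^0\|_{\B}\to 0$. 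The assumed uniqueness of the abstract Utkin solution translates, via the bijection established just before Corollary \ref{cor:Utkin_EE}, into uniqueness of the Utkin solution of the IDE \eqref{eq:EE_IDE}. Theorem \ref{thm:continuity_iODE} then yields $y_\mu=Cx_\mu\to y=Cx$ uniformly on compacts of $\mathcal{I}$.

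To lift this output convergence to $\B$-convergence of $x_\mu$ itself, I subtract the two mild-solution formulas and obtain
\[
x_\mu(t)-x(t)=\bigl[\Psi(t-t^1)x^1-\Psi(t-t^0)x^0\bigr]+\int^{t}_{t^1}\Psi(t-\tau)Bu_{\rm eq,\mu}(\tau)d\tau-\int^{t}_{t^0}\Psi(t-\tau)Bu_{\rm eq}(\tau)d\tau+o(1),
\]
where $o(1)$ collects the $\gamma$-driven shifts between $t^1$ and $t^0$, which vanish in $\B$ since $\gamma\in L^{\infty}$ and $|t^1-t^0|\to 0$. The bracketed term vanishes strongly by $\|x^1-x^0\|_{\B}\to 0$ and strong continuity of $\Psi$. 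Since $\tilde u$ is locally essentially bounded, $\{u_{\rm eq,\mu}\}$ is uniformly bounded in $L^{\infty}$, so along any subsequence Banach--Alaoglu yields $u_{\rm eq,\mu_k}\rightharpoonup^{*}\bar u$. The key compactness is that the input-to-state operator $v\mapsto\int^{t}_{t^0}\Psi(t-\tau)Bv(\tau)d\tau$, from bounded subsets of $L^{\infty}((t^0,t);\R^m)$ into $\B$, is compact: its range lies inside the closed convex hull of the compact set $\{T\Psi(s)Bw:s\in[0,T],\,\|w\|\leq R\}\subset\B$, which is compact because $(s,w)\mapsto\Psi(s)Bw$ is continuous on a compact set, and Mazur's theorem preserves compactness under closed convex hull. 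Hence the integral of $u_{\rm eq,\mu_k}$ converges strongly in $\B$. Upper semicontinuity of $K[\tilde u](t,\cdot)$ combined with uniform convergence $y_{\mu_k}\to y$ forces $\bar u(t)\in U(t,y(t))$ almost everywhere, so the strong limit $\bar x$ is an Utkin solution of \eqref{eq:EE}, \eqref{eq:EE_output}, \eqref{eq:EE_control} starting from $(t^0,x^0)$; the uniqueness hypothesis gives $\bar x=x$, and since the same argument applies to every subsequence, the whole family $x_\mu$ converges strongly to $x$ uniformly on compacts of $\mathcal{I}$.

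The main obstacle is exactly this last passage: Theorem \ref{thm:continuity_iODE} only controls the finite-dimensional output $y_\mu$, whereas the claim concerns $\B$-valued convergence of $x_\mu$, and weak-$*$ convergence of the $u_{\rm eq,\mu}$ yields only weak convergence of the Bochner integrals in $\B$. The compactness of the finite-rank input-to-state map, together with uniqueness to identify $\bar u$, is what upgrades weak to strong convergence and closes the gap; the strengthened hypothesis $\range(B)\subset\mathcal{D}(A^{2})$ enters one level higher, ensuring the kernel $\Phi$ in \eqref{eq:EE_IDE} has enough regularity for Assumption \ref{as:ex_IDE} and for the continuity estimate on $p$ to hold uniformly in the perturbation parameter.
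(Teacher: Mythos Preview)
Your invocation of Theorem \ref{thm:continuity_iODE} on the output IDE \eqref{eq:EE_IDE} contains a genuine gap. That theorem controls \emph{Filippov} solutions of the IDE, but the hypothesis of the corollary only gives uniqueness of the \emph{Utkin} solution. In \eqref{eq:EE_IDE} both the instantaneous term and the integrand carry the discontinuity $\tilde u(\cdot,y)$, and Example \ref{ex:5} shows exactly that in this situation Filippov solutions can be strictly more numerous than Utkin solutions. So ``$y_\mu$ is $\varepsilon$-close to some Filippov solution of \eqref{eq:EE_IDE}'' does not force $y_\mu\to y$, and the first reduction step does not go through as written. You also misidentify the role of $\range(B)\subset\mathcal{D}(A^2)$: Assumption \ref{as:ex_IDE} needs only local boundedness of $\Phi$, which already follows from $\range(B)\subset\mathcal{D}(A)$.

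The paper closes both holes simultaneously by augmenting with $\xi(t)=\int_{t^0}^{t}u_{\rm eq}(\tau)\,d\tau$. The hypothesis $\range(B)\subset\mathcal{D}(A^2)$ is used precisely here: it makes $\Phi$ differentiable with $\Phi'(s)=C\Psi(s)A^2B$, so one can integrate by parts and rewrite $\int_{t^0}^{t}\Phi(t-\tau)u_{\rm eq}(\tau)\,d\tau$ purely in terms of $\xi$. The resulting IDE for $(y,\xi)$ has a \emph{linear} integrand, hence its Filippov solutions coincide with Utkin solutions of the original problem, and Theorem \ref{thm:continuity_iODE} now applies legitimately. A second integration by parts (needing only $\range(B)\subset\mathcal{D}(A)$) rewrites the mild solution as
\[
x(t)=\Psi(t-t^0)x^0+B\xi(t)+\int_{t^0}^{t}\Psi(t-\tau)AB\,\xi(\tau)\,d\tau+\int_{t^0}^{t}\Psi(t-\tau)B\gamma(\tau)\,d\tau,
\]
which is manifestly continuous in $(x^0,Ax^0,\xi)$ and finishes the argument without any weak-$*$ or compactness machinery. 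Your compactness route for the input-to-state map is an interesting alternative and could be made to stand on its own (drop Theorem \ref{thm:continuity_iODE} entirely, extract a uniformly convergent subsequence $x_{\mu_k}\to\bar x$ directly, and use a Mazur-lemma argument to pass from $u_{\rm eq,\mu_k}\rightharpoonup^{*}\bar u$ to $\bar u(t)\in U(t,C\bar x(t))$ a.e.), but as presented that closure step is only gestured at, and your argument never actually consumes the $\mathcal{D}(A^2)$ hypothesis---a sign that the mechanism the paper intends has been missed.
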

 	\begin{proof}
 	Let $x(t,t^0,x^0)$ be  an Utkin solution  with the corresponding equivalent control $u_{\rm eq}$. Let us denote $\xi(t,t^0,x^0)=\int^t_{t^0}u_{\rm eq}(\tau,t^0,x^0) d\tau$. 
 		Since $\range(B)\subset \mathcal{D}(A^2)$ then $\frac{d\Psi(s)AB}{ds}=\Psi(s)A^2B, \forall s> 0$ and
 		\vspace{-1mm}
 		\[\small
 		\begin{split}
 			-\xi(t,t^0,x^0)=&\,\Phi(t-t^0)\xi(t^0,t^0,x^0)-\Phi(0)\xi(t,t^0,x^0)\\
 			=&\int^{t-t^0}_{0} \frac{d}{d\sigma}\left(\Phi(\sigma) \xi(t-\sigma,t^0,x^0)\right)d \sigma\\ =&\int^{t-t^0}_{0}C\Psi(\sigma)A^2B\xi(t-\sigma,t^0,x^0) d\sigma\\
 			&-\int^{t-t^0}_0 \Phi(\sigma) \dot \xi(t-\sigma,t^0,x^0)d \sigma
 		\end{split}\vspace{-1mm}
 		\]
 		Hence, $\int^t_{t^0} \Phi(t-\tau) \dot \xi(\tau,t^0,x^0)d \tau =\xi(t,t^0,x^0)
 		+\int^t_{t^0}C\Psi(t-\tau)A^2B\xi(\tau,t^0,x^0) d\tau$ and
 		the pair $y(t,t^0,x^0)=Cx(t,t^0,x^0)$, $\xi(t,t^0,x^0)$ is an Utkin solution of the  IDE\vspace{-1mm}
 		\[\small
 		\begin{split}
 			\dot y(t,t^0,x^0)\in &\,p(t,t^0,x^0)+CB(U(t,y(t,t^0,x^0))+\gamma(t))\\
 			& + \int^t_{t^0} \Phi(t-\tau) \gamma(\tau )d \tau+\xi(t,t^0,x^0)\\
 			&+\int^t_{t^0}C\Psi(t-\tau)A^2B\xi(\tau,t^0,x^0) d\tau,\\
 			\dot \xi(t,t^0,x^0)\in&\, U(t,y(t,t^0,x^0)), \quad \xi(t^0)=0, \quad y(t^0)=Cx^0,
 		\end{split}\vspace{-1mm}
 		\]
 		where $p(t,t^0,x^0)=C\Psi(t-t^0)Ax^0$.  Since $p(t,t^0,x^0)=C\Psi(t-t^0)Ax^0$ depends continuously on $Ax^0$, then,  using Theorem \ref{thm:continuity_iODE}, 
 		we conclude that $\xi(t,t^0,x^0)$ depends continuously on $(t^0,x^0,Ax^0)$. 
 		Since $\mathrm{range}(B)\!\subset\! \D(A^2)\!\subset\! \D(A)$ then \vspace{-1mm}
 		\[
 		\begin{split}
 		x(t,t^0\!,x^0)\!=\,&\Psi(t\!-\!t^0)x^0\!+\!\!\int^t_{t^0}\!\!\Psi(t\!-\!\tau) B(\dot \xi(\tau,t^0\!,x^0)\!+\!\gamma(\tau))d\tau\\
 		=&\Psi(t-t^0)x^0\!+\!B\xi(t,t^0\!,x^0)\!+\!\!\int^t_{t^0}\!\!\!\Psi(t\!-\!\tau) B\gamma(\tau)d\tau\\
 		&+\int^t_{t^0}\Psi(t-\tau) AB \xi(\tau,t^0,x^0)d\tau,
 		\end{split}\vspace{-1mm}
 		\]
 		where the identity $\frac{d\Psi(s)B}{ds}=\Psi(s)AB,\forall s> 0$ and the integration by parts has been utilized.
 	Continuous dependence of $\xi(t,t^0,x^0)$ on $(t^0,x^0,Ax^0)$ implies continuous dependence of $x(t,t^0,x^0)$ on $(t^0,x^0,Ax^0)$.
 	\end{proof}
The discontinuous IDE \eqref{eq:EE_IDE} is utilized  above for  well-posedness analysis of the differential equation in the Banach space.  This IDE, can also  be used for SMC design by means of  Theorem \ref{thm:SMC_design_LTI_IDE}. However,  since, by Corollary \ref{cor:Utkin_EE}, any Utkin solution satisfies \eqref{eq:EE} with $u(t)=u_{\rm eq}(t)\in U(t,Cx(t))$ almost everywhere, then a control design as well as stability/completeness/robustness analysis can also be based on the original differential equation \eqref{eq:EE} in the Banach space. 
 \begin{example}\itshape\label{ex:8}
 1) \textbf{Regularization of PDE using IDE}.	 The example \eqref{eq:heat}--\eqref{eq:heat_control}  admits the representation \eqref{eq:EE} with 
 $A\!=\!\nu\frac{\partial^2}{\partial z^2}$, $D(A)\!=\!H^2((0,1);\R)\cap H^1_0((0,1);\R)$, $(Bx)(z)\!=\!\beta(z) x(z)$, where $H^2$, $H^1_0$ are Sobolev spaces. 
 The closed densely defined operator $A: \D(A)\subset L^2((0,1);\R)\to L^2((0,1);\R)$ is a generator of the strongly continuous semigroup of linear bounded operators on $L^2((0,1);\R)$ (see, e.g., \cite[page 212]{Pazy1983:Book}).
The output \eqref{eq:EE_output} is given by $y(t)=Cx(t)=\int^1_0\xi(z)x(t,z)dz$. The control law \eqref{eq:EE_control}  is defined by the formula  \eqref{eq:heat_control}.  Since $\beta\in \D(A)$, then, by Corollary \ref{cor:Utkin_EE}, the system  \eqref{eq:heat}--\eqref{eq:heat_control}  has an Utkin solution $x(t)\!\in\! \D(A)$, $\forall x^0\in \D(A)$, $\forall  
 q\!\in\! L^{\infty}((t^0,+\infty);\R)$ in \eqref{eq:heat_control}.
 
 2) \textbf{SMC design using PDE}.	
 The closed-loop heat control system \eqref{eq:heat}-\eqref{eq:heat_control} is forward complete, i.e., all Utkin solutions exist globally in time. 
Indeed, considering $V(t)\!=\!\int^1_0 \!x^2(t,z) dz$ and using integration by parts, we derive\vspace{-1mm}
 \[\vspace{-1mm}\small 
 \begin{split}
 \dot V(t)=&\,2\int^1_0 \nu x(t,z)\tfrac{\partial^2 x(t,z)}{\partial z^2 } +x(t,z) \beta(z) (u_{\rm eq}(t)+\gamma(t))dz\\[-2mm]
=& -2\int^1_0 \nu \left(\tfrac{\partial x(t,z)}{\partial z } \right)^2 +x(t,z) \beta(z) (u_{\rm eq}(t)+\gamma(t))dz,
\end{split}
 \]
 where $u_{\rm eq}(t)\in q(t)\,\overline{\sign}(Cx(t))$.
Since $x(t,\cdot)\in \D(A)$ almost everywhere then the function $z\mapsto x(t,z)$ is continuously differentiable in $z$ for almost all $t$. Using Wirtinger and Cauchy-Schwartz inequalities,  we come up with the estimate
\[
 \dot V(t)\leq -\nu 2\pi^2  V(t)+2(\|q\|_{L^{\infty}}+\|\gamma\|_{L^{\infty}})\|\beta\|_{L^2} \sqrt{V(t)},
\]     
that is fulfilled almost everywhere.
Therefore,  any Utkin solution of the closed-loop system is bounded and  \vspace{-1mm}
 \begin{equation}\label{eq:heat_est_x}
 \limsup_{t\to +\infty} \|x(t,\cdot)\|_{L^2}\leq \tfrac{(\|q\|_{L^{\infty}}+\|\gamma\|_{L^{\infty}})\|\beta\|_{L^2}}{\nu \pi^2}. \vspace{-1mm}
 \end{equation}
Since $CB=\int^1_0 \xi(z)\beta(z) d z$ and $Cx(t)=\int^1_0 \xi(z)x(t,z) dz$ then, due to $\xi(0)=\xi(1)=x(t,0)=x(t,1)=0$ and $\xi\in C^2([0,1];\R)$, the 
 integration by parts yields \vspace{-1mm}
 \begin{equation}\label{eq:input_output_heat_temp}
 \begin{split}
 \tfrac{dCx(t)}{dt}=&\int^1_0  \nu  \xi(z) \tfrac{\partial^2 x(t,z)}{\partial z^2} dz +CB(u_{\rm eq}(t) +\gamma(t))\\
 =&\int^1_0  \nu  \tfrac{\partial^2 \xi(z)}{\partial z^2} x(t,z)dz  + CB(u_{\rm eq}(t) +\gamma(t))\\
 =& -\nu \lambda Cx(t)+\nu \int^1_0  \!\!\left(\tfrac{\partial^2 \xi(z)}{\partial z^2}\!+\!\lambda \xi(z)\right)x(t,z)dz \\
 & + CB(u_{\rm eq}(t) +\gamma(t)), \quad \quad \forall \lambda\geq 0.
 \end{split} \!\vspace{-1mm}
 \end{equation}
 Taking into account that  $u_{\rm eq}(t)\in q(t)\overline{\sign}(Cx(t))$, we conclude that, 
 for $Cx(t)\!\neq\! 0$, it holds $u_{\rm eq}(t)\!=\! q(t)\sign(Cx(t))$,  \vspace{-1mm}
 \[
 \begin{split}
\tfrac{d  |Cx(t)|}{dt}\leq&\, -\nu \lambda |Cx(t)|
+CBq(t)\\
&+\nu \left \|\tfrac{\partial^2 \xi}{\partial z^2}+\lambda \xi\right\|_{L^2} \!\|x(t,\cdot)\|_{L^2}+\|CB\gamma\|_{L^{\infty}}
\end{split} \vspace{-1mm}
 \]
 almost everywhere.
If $CB=\int^1_0\xi(z)\beta(z) dz \neq 0$ 
then, taking $\delta>0$ and  $q(t)=-\frac{\rho
}{CB}$ with $\rho>0$,
we have  due to  \eqref{eq:heat_est_x}: \vspace{-1mm}
$$
 \|x(t,\cdot)\|_{L^2}\leq \tfrac{(\rho+\|\gamma\|_{L^{\infty}}|CB|)\|\beta\|_{L^2}}{\nu \pi^2|CB|}+\delta \vspace{-1mm}
$$
for almost all  $t\geq t^0+\tilde T(x_0)$ provided that $\tilde T(x_0)\geq 0$ is large enough.
If $\exists \lambda\geq 0$ such that  \vspace{-1mm}
\begin{equation}\label{eq:cond_lambda}
	\sqrt{\!\int^1_0 \!\!\! \left(\! \tfrac{\partial^2 \xi(z)}{\partial z^2}\!+\!\lambda \xi(z)\!\right)^{\!2} \!\!\!dz \!\int^{1}_0\!\!\!\!\beta^2(z)dz} \!<\! \pi^2\! \left|\int^1_0\!\!\!\!\xi(z)\beta(z) dz\right| \vspace{-1mm}
\end{equation}
then, for  \vspace{-1mm}
\[
\rho\!>\!\tfrac{|CB|\left( \pi^2\bar \gamma |CB|+\left(\nu\delta\pi^2+\bar \gamma\|\beta\|_{L^2}\right)\left \|\frac{\partial^2 \xi}{\partial z^2}+\lambda\xi\right\|_{L^2}\right)}{ \pi^2\left|CB\right|-\|\beta\|_{L^2} \left\|\frac{\partial^2 \xi}{\partial z^2}+\lambda \xi \right\|_{L^2} }, \quad\bar \gamma\!\geq\! \|\gamma\|_{L^{\infty}}, \vspace{-1mm}
\]
we derive the following estimate\vspace{-1mm}
\[\small\vspace{-1mm}
\begin{split}
\tfrac{d  |Cx(t)|}{dt}\!\leq& \nu \left \| \tfrac{\partial^2 \xi}{\partial z^2}\!+\!\lambda \xi\right\|_{L^2}\! \left(\tfrac{(\rho+\|\gamma\|_{L^{\infty}}|CB|)\|\beta\|_{L^2}}{\nu \pi^2|CB|}\!+\!\delta\right) \\
&+|CB|\cdot \|\gamma\|_{\infty}-\rho<0
\end{split}
\]
for almost all $t\geq t^0+\tilde T(x^0)$ such that $Cx(t)\neq 0$. 
Therefore,  there exists a finite number $T\geq 0$ such that $Cx(t)\to 0$ as $t\to T$. Moreover, since  $t\mapsto Cx(t)$ is an absolutely continuous function defined for all $t\geq t^0$ then the implication 
$|Cx(t)|\neq0 \Rightarrow \frac{d |Cx(t)|}{dt}<0$ implies that $Cx(t)=0$ for all $t\geq T$.
Therefore, the system \eqref{eq:heat}, \eqref{eq:heat_boundary}, \eqref{eq:tilde_D}, \eqref{eq:heat_y}, \eqref{eq:heat_control} has a sliding mode and the identity  $\frac{dCx(t)}{d t}=0$ implies  \vspace{-1mm}
  \[
 u_{\rm eq}(t)=-\gamma(t)-\tfrac{\nu \int^1_0  \frac{\partial^2 \xi(z)}{\partial z^2} x(t,z)dz}{CB} \vspace{-1mm}
 \]
 for almost all  $t\geq T$. The dynamics of the original system in the sliding mode is defined by the equation \vspace{-1mm}
 \begin{equation}\label{eq:heam_SMC_dynamics1}
\tfrac{\partial x(t,z)}{\partial t}\!=\!\nu\tfrac{\partial^2 x(t,z)}{\partial z^2}\!-\!\tfrac{\nu \int^1_0 \frac{\partial^2 \xi(z)}{\partial z^2} x(t,z)dz}{\int^1_0 \xi(z)\beta(z)dz}\beta(z), 
 \vspace{-1mm}
 \end{equation}
with the homogeneous boundary conditions \eqref{eq:heat_boundary} and  \vspace{-1mm}
\begin{equation}\label{eq:heam_SMC_dynamics2}
\int^1_0 \xi(z) x(t,z)=0 \vspace{-1mm}
\end{equation}
for almost all $ t\geq T$.

3) \textbf{SMC design using IDE}.		The same result can be obtained  considering the input-output dynamics of the heat control system. Using the modal decomposition  \vspace{-1mm}
	\begin{equation}
	 		x(t,z)=\sum_{i=1}^{\infty} x_i(t)\phi_i(z), \quad x_i(t)\in \R,\vspace{-1mm}
	 	\end{equation}
 	where $\phi_i(z)=\sqrt{2}\sin(\sqrt{\lambda_i} z), \lambda_i=\pi^2 i^2$,  
 	the system  \eqref{eq:heat} - \eqref{eq:heat_ic} can be rewritten (see, e.g. \cite{Balas1988:JMAA}, \cite{PrieurTrelat2019:TAC},  \cite{KatzFridman2021:EJC}) in the form \vspace{-1mm}
	\begin{equation}\label{eq:heat_model}
	 		\dot x_i(t)=-\nu \lambda_ix_i(t)+b_{i}(u(t)+\gamma(t)), \quad t>t_0,\vspace{-1mm}
	 	\end{equation}
 	where $x_i(0)=x^0_i=\int^1_0\!\phi_i(z) x^0(z)dz$, $b_{i}=\int^1_0\!\phi_i(z) \beta(z) dz$.
 	 By Cauchy formula, we have\vspace{-1mm}
 	\[
 	x_i(t)\!=\!e^{-\nu \lambda_i(t-t_0)}x^0_i+\int^t_{t_0} \!\!e^{-\nu \lambda_i(t-\tau)}b_{i}(u(
 	\tau)+\gamma(\tau)) d\tau.\vspace{-1mm}
     \]
Therefore, the input-output dynamics \eqref{eq:input_output_heat_temp} is given by\vspace{-1mm}
 \begin{equation}\label{eq:input_output_heat}
 \begin{split}
	 \dot y(t)=&-\nu\lambda y(t)+p(t,t^0)+CB(u(t)+\gamma(t))\\
	 &+\int^{t}_{t^0}\Phi(t,\tau)(u(\tau)+\gamma(\tau)) d\tau,
	 \end{split}\vspace{-2mm}
 \end{equation}
 \[
 p(t{\color{blue}-}t^0)=\nu\sum_{i=1}^{\infty}e^{-\nu\lambda_i(t-t^0)}x_i^0\int^1_0  \left(\tfrac{\partial^2 \xi(z)}{\partial z^2}+\lambda \xi(z)\right) \phi_i(z)dz,\vspace{-2mm}
 \]
 \[
 \Phi(t{\color{blue}-}\tau)=\nu \sum_{i=1}^{\infty}e^{-\nu\lambda_i(t-\tau)}b_i \int^1_0  \left(\tfrac{\partial^2 \xi(z)}{\partial z^2}+\lambda \xi(z)\right) \phi_i(z)dz.\vspace{-1mm}
 \]
 Since $|\Phi(t{\color{blue}-}\tau)|\leq \nu e^{-\nu \pi^2(t-\tau)} \|\beta\|_{L^2}\left\|\frac{\partial^2 \xi}{\partial z^2}+\lambda \xi\right\|_{L^2} $ then 
 the inequality \eqref{eq:cond_lambda} yields $M=\frac{\esssup_{t\geq t^0}\int^t_{t^0}\|\Phi(t{\color{blue}-}\tau)| d\tau}{|CB|}\!<\!1$. 
 
  Let $q(t)=-\frac{|CB| \bar \gamma+\bar p+M\bar \gamma+\delta}{(1-M)CB}$ with  $\delta>0$ and $ \bar p\geq |p(t-t^0)|$.
 By Theorem \ref{thm:SMC_design_LTI_IDE} and Corollary \ref{cor:Utkin_sol_LTI_SMC},  
 the system \eqref{eq:input_output_heat}, \eqref{eq:heat_control} has the unique Utkin solution and $y(t)=0$ for all $t\geq T$, where $T\geq t^0$ is some instant of time. This means that   the Utkin solution of the system  \eqref{eq:heat}--\eqref{eq:heat_control}  satisfies  \eqref{eq:heam_SMC_dynamics2}  for all $t\geq T$.
 
 4) \textbf{Stability analysis of the heat system in the sliding mode}.	
Let us show that the system  \eqref{eq:heam_SMC_dynamics1}, \eqref{eq:heam_SMC_dynamics2}, \eqref{eq:heat_ic} is globally exponentially  stable. Considering again the functional $t\to V(t)=\int^1_0 x^2(t,z)dz$ ,  we derive\vspace{-1mm}
\[
\dot V(t)\leq -2\nu \pi^2  V(t)-2\nu \tfrac{ \int^1_0 \frac{\partial^2 \xi(z)}{\partial z^2} x(t,z)dz\int^1_0\beta(z)x(t,z)dz}{\int^1_0 \xi(z)\beta(z)dz}.\vspace{-1mm}
\]
The identity \eqref{eq:heam_SMC_dynamics2}  and Cauchy-Schwartz inequality give \vspace{-1mm}
\[\small \vspace{-1mm}
\begin{split}
\left| \int^1_0 \tfrac{\partial^2 \xi(z)}{\partial z^2} x(t,z)dz\right|=&
\left|\int^1_0 \left(\tfrac{\partial^2 \xi(z)}{\partial z^2}+\lambda  \xi(z)\right) x(t,z)dz\right|\\
 \leq &\left\|\tfrac{\partial^2 \xi}{\partial z^2}+\lambda \xi \right\|_{L^2} \sqrt{V(t)}.\vspace{-1mm}
\end{split}
\]
Using the inequality $\left|\int^1_0 \beta(z)x(t,z) dz\right|\leq \|\beta\|_{L^2}\sqrt{V(t)}$ and the condition  \eqref{eq:cond_lambda} we come up with the estimate \vspace{-1mm}
\[
\dot V(t)\leq 2V(t)\left(\tfrac{\left\|\frac{\partial^2 \xi}{\partial z^2}+\lambda \xi \right\|_{L^2} \|\beta\|_{L^2}}{|CB|}-\pi^2\right)<0\, \text{ for }\, V(t)\neq 0.\vspace{-1mm}\]
So, the system \eqref{eq:heam_SMC_dynamics1}, \eqref{eq:heam_SMC_dynamics2}, \eqref{eq:heat_ic} is globally exponentially stable.

The condition \eqref{eq:cond_lambda} is fulfilled, for example, if
\[
\beta(z)=\left\{
\begin{array}{ll}
\exp\left(\frac{1}{(0.3-x)(0.6-x)}\right) & \text{ if } x\in (0.3,0.6),\\
0  & \text{ otherwise},
\end{array}
\right.
\]  
\[
\xi(z)=\sin(\pi z)+z(1-z) \quad \text{ and } \quad \lambda=\pi^2.
\]
In this case, we have $CB\approx 0.33$,   $\|\beta\|_{L^2}\approx 0.55$ and $ \left\|\tfrac{\partial^2 \xi}{\partial z^2}+\lambda \xi \right\|_{L^2}\approx 0.8$ for $\lambda=\pi^2$. The figures \ref{fig:heat_state}, \ref{fig:heat_sliding}, \ref{fig:heat_control} show simulation results for the heat system 
\eqref{eq:heat}-\eqref{eq:heat_y} with the sliding mode control \eqref{eq:heat_control}, $q(t)=-\frac{\rho}{CB}, \rho=1$ and the matched perturbation $\gamma(t)=0.5$. The initial state $x^0\in \mathcal{D}(A)$ is  defined as follows $x_0(z)=10z(1-z),z\in [0,1]$. 
The  method of modal decomposition \cite{Balas1988:JMAA}, \cite{PrieurTrelat2019:TAC}, \cite{KatzFridman2021:EJC}, \cite{Ayamou_etal2025:IMA} with  60 modes is  utilized for numerical simulation of the closed-loop SMC system. The time sampling is $0.001$.
	\begin{figure}
	\centering
	\includegraphics[width=0.37\textwidth]{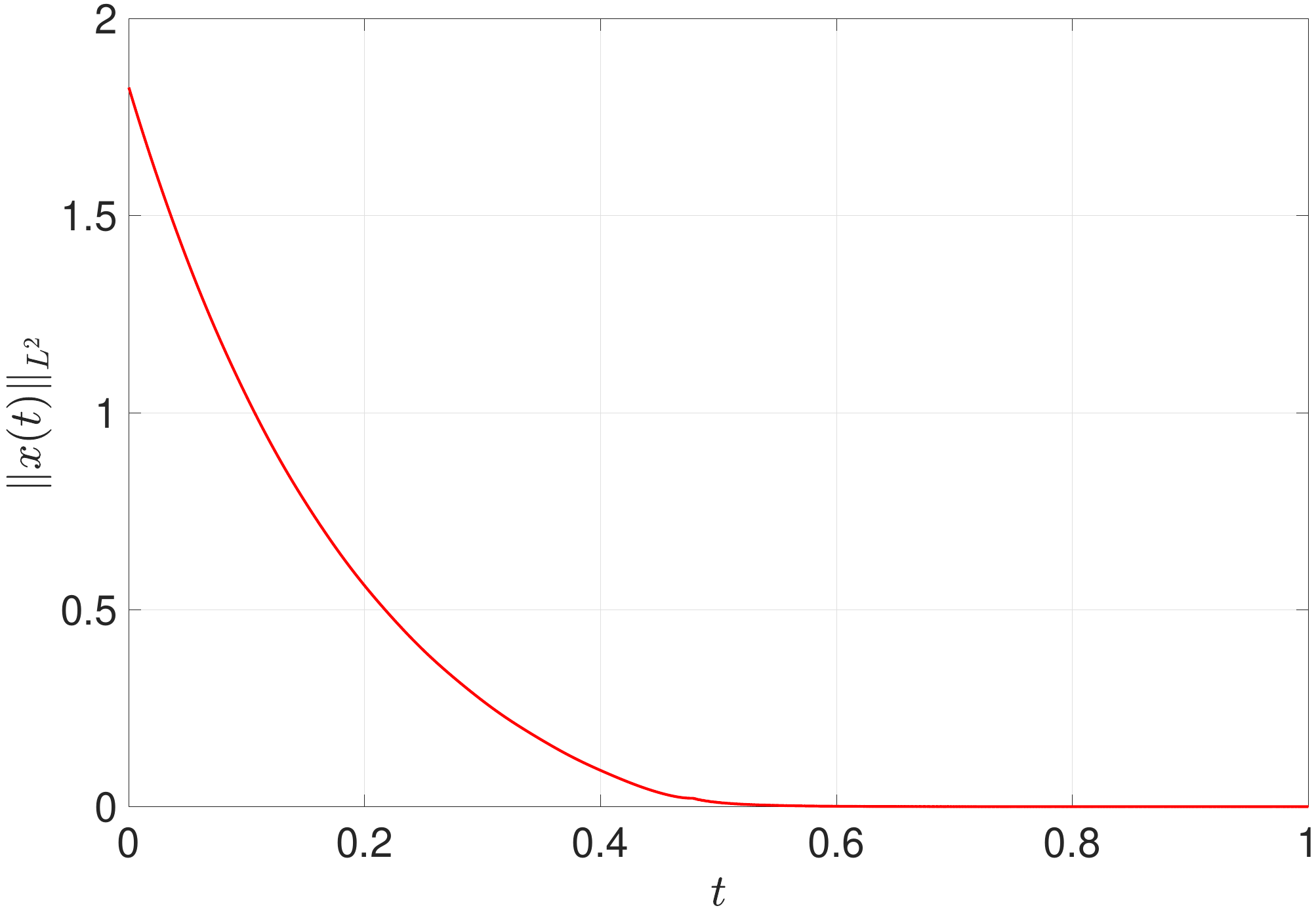}\vspace{-2mm}
	\caption{The states of the heat system \eqref{eq:heat}-\eqref{eq:heat_y} with the SMC \eqref{eq:heat_control}}
	\label{fig:heat_state}
\end{figure}
\begin{figure}
	\centering
	\includegraphics[width=0.37\textwidth]{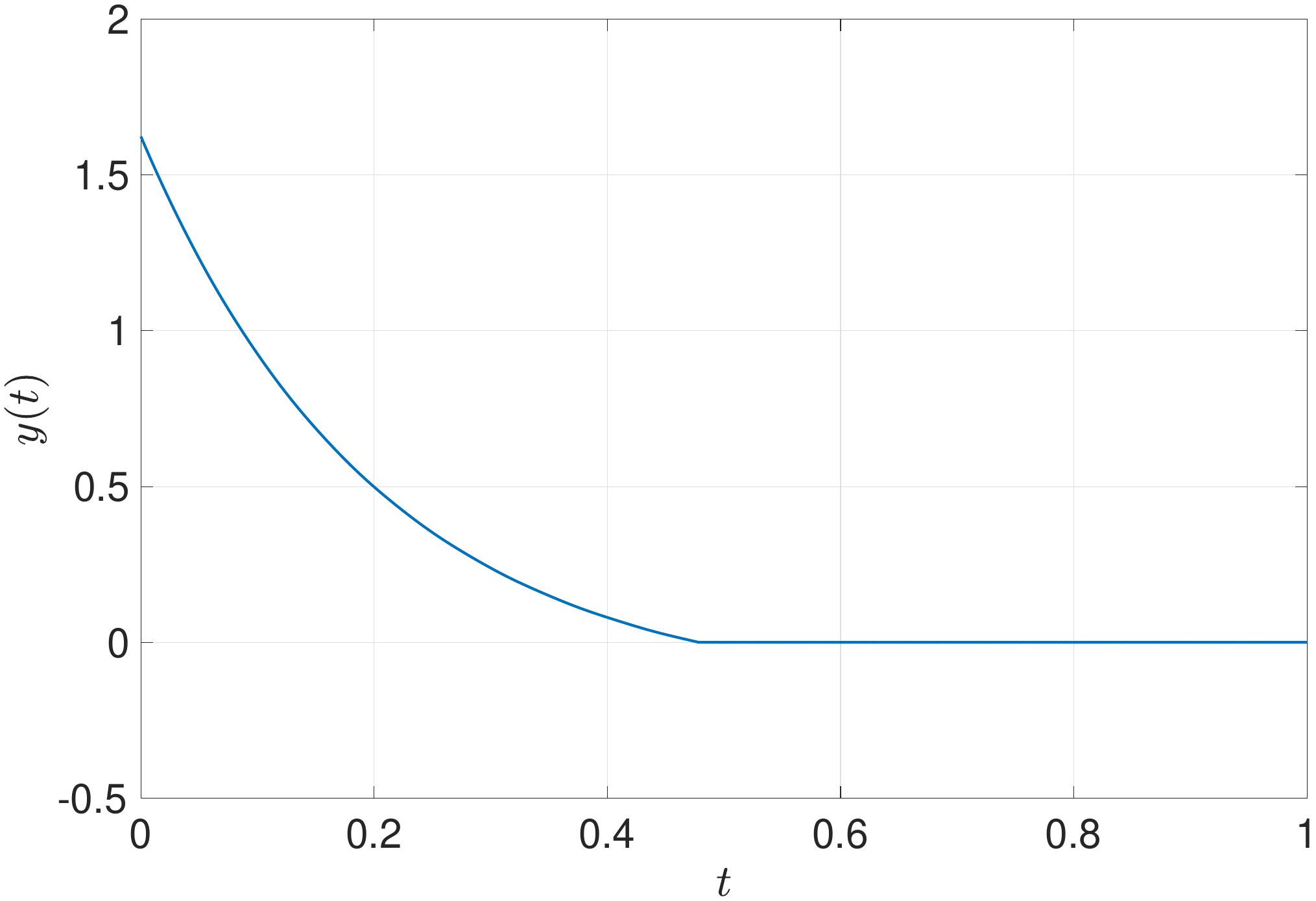}\vspace{-2mm}
	\caption{The output of the heat system \eqref{eq:heat}-\eqref{eq:heat_y} with the SMC \eqref{eq:heat_control}}
	\label{fig:heat_sliding}
\end{figure}
\begin{figure}
	\centering
	\includegraphics[width=0.37\textwidth]{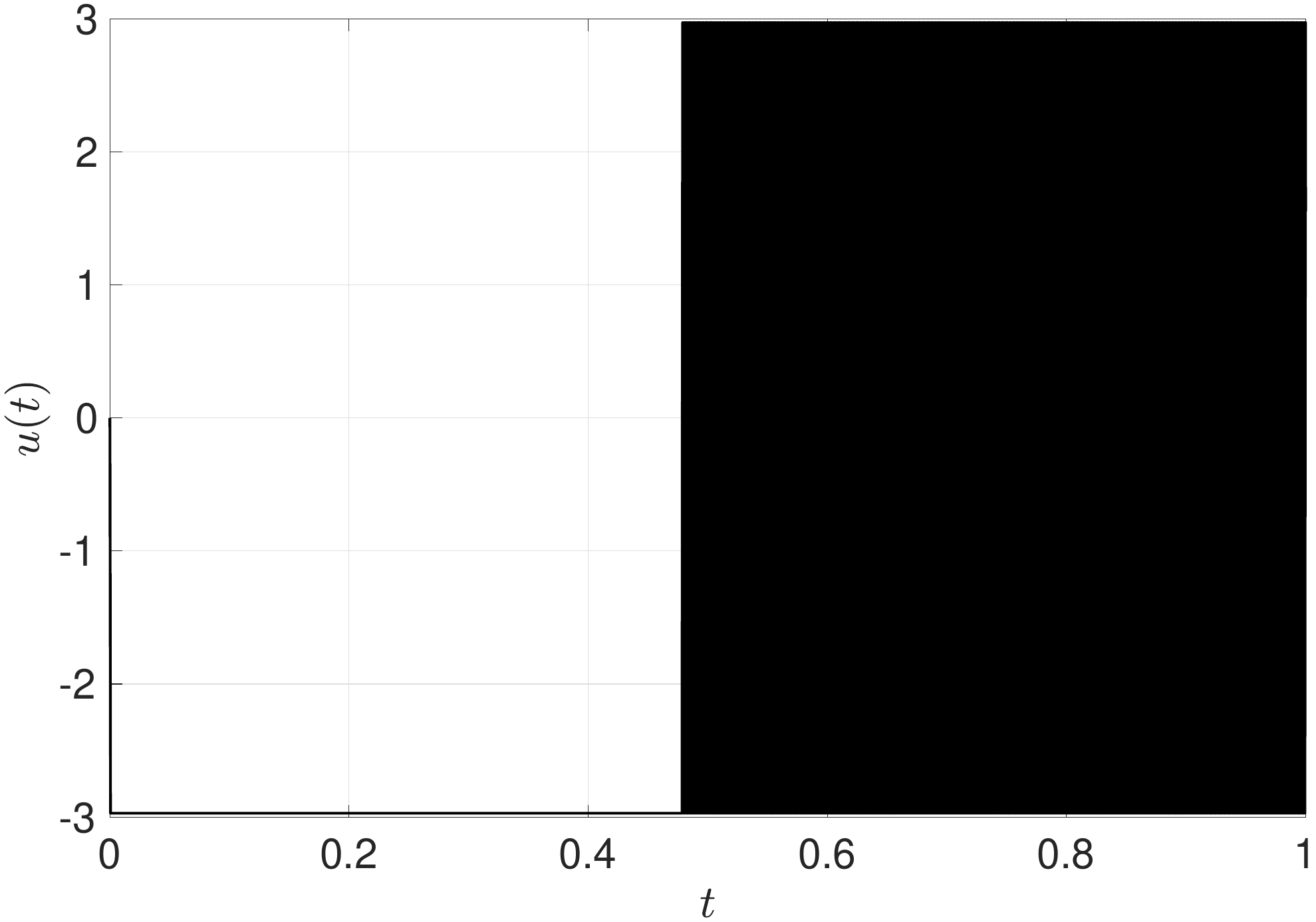}\vspace{-2mm}
	\caption{The control input of the heat system \eqref{eq:heat}-\eqref{eq:heat_y} with the SMC \eqref{eq:heat_control}}
	\label{fig:heat_control}
\end{figure}

\end{example}

The finite-dimensional IDE-based procedure of regularization of infinite-dimensional systems with sliding mode control can be extended to other systems  using, for example, 
the methodology suggested in \cite{Polyakov2025:CDC}.  Being interesting future research topic,
a further study of the SMC design for such systems goes out of the scope of this paper.

}

\section{Conclusions}\label{sec:con}

In this paper, Filippov method  is extended to systems modeled by integro-differential equations.
This method  allows an ill-posed discontinuous system to be regularized by means of its transformation to a well-posed differential inclusion.  Such ill-posed discontinuous systems appear in sliding mode control theory, which uses discontinuous feedback laws for regulation of systems and rejection of matched perturbations. The advantage of the Filippov method consists in a minimal (from set-theoretic point of view) extension of an ill-posed equation to a well-posed inclusion. This  implies a better matching of the dynamics of the control system by the regularized model.
 In the paper, the equivalent control method is developed for affine-in-control systems modeled by integro-differential equations.  The well-posedness of regularized systems is studied. Filippov method and  equivalent control approach are key tools for analysis of control systems with sliding modes.

To illustrate  theoretical results,  sliding mode control is designed for a class of systems with distributed input delay. The dynamics of the closed-loop system in the sliding mode is obtained in the form of an algebro-integro-differential equation. The equivalent control is defined as a solution of the Volterra  integral equation of the second kind.  

For a class of infinite-dimensional control systems  \eqref{eq:EE} with  finite-dimensional inputs and outputs, a sliding mode control can be designed and analyzed using the developed theory. Indeed, the input-output dynamics of such an infinite-dimensional system can always be represented as a finite-dimensional integro-differential equation. If the input-output dynamics admits the representation \eqref{eq:LTI_IDE}, \eqref{eq:LTI_y}, then  the SMC for the original system can be designed in the form \eqref{eq:SMC_LTI} provided that all conditions of Theorem \ref{thm:SMC_design_LTI_IDE} are fulfilled.
In Example \ref{ex:8}, this approach is demonstrated for a heat control system modeled by PDE. 

The obtained theoretical results  provide all necessary tools for extension  of other
methods of SMC theory  (e.g.,  high order SMC \cite{Levant2003:IJC} or implicit SMC discretization \cite{AcaryBrogliato2010:SCL})
to systems modeled by integro-differential equations  and by differential equations in Banach spaces (e.g.,  PDEs). 
\bibliographystyle{plain}
\bibliography{bib_all.bib}

\end{document}